\definecolor{darkred}{rgb}{0.7,0,0} 
\newcommand{\defn}[1]{{\color{darkred}\emph{#1}}} 
\definecolor{darkgreen}{rgb}{0,0.6,0} 
\definecolor{darkblue}{rgb}{0,0,0.6} 
\newcommand{\urltilde}{\kern -.15em\lower .7ex\hbox{~}\kern .04em}  
\newcommand{\ZZ}{\mathbb{Z}}
\newcommand{\CC}{\mathbb{C}}
\newcommand{\NN}{\mathbb{N}}
\newcommand{\bfa}{\mathbf{a}}
\newcommand{\bfb}{\mathbf{b}}
\newcommand{\bfr}{\mathbf{r}}
\newcommand{\fD}{\mathfrak{D}}
\newcommand{\fM}{\mathfrak{M}}
\newcommand{\fP}{\mathfrak{P}}
\newcommand{\fb}{\mathfrak{b}}
\newcommand{\fg}{\mathfrak{g}}
\newcommand{\fu}{\mathfrak{u}}
\newcommand{\fp}{\mathfrak{p}}
\newcommand{\mcI}{\mathcal{I}}
\newcommand{\mcO}{\mathcal{O}}
\newcommand{\iso}{\simeq}
\newcommand{\orbit}[1]{\mathcal{O}_{#1}}
\DeclareMathOperator{\GL}{GL}
\DeclareMathOperator{\supp}{supp}
\DeclareMathOperator{\rk}{rk}  
\DeclareMathOperator{\Orb}{Orb}  
\renewcommand{\mid}{:}
\newcommand{\diag}{\mathop{\mathrm{diag}}\nolimits}
\newcommand{\length}{\mathop{\mathrm{length}}\nolimits}
\newcommand{\Ad}{\mathop{\mathrm{Ad}}\nolimits}
\newcommand{\ad}{\mathop{\mathrm{ad}}\nolimits}
\newcommand{\red}{\mathop{\mathrm{red}}\nolimits} 
\newcommand{\Spec}{\mathop{\mathrm{Spec}}\nolimits} 
\newcommand{\Hilb}{\mathop{\mathrm{Hilb}}\nolimits}
\newcommand{\I}{\mathop{\mathbf{I}}\nolimits}
\newcommand{\spec}{\mathop{\mathrm{Spec}}\nolimits}  
\newcommand{\pr}{\mathop{\mathrm{pr}}\nolimits} 
\newcommand{\tr}{\mathop{\mathrm{tr}}\nolimits}
\newcommand{\Lie}{\mathop{\mathrm{Lie}}\nolimits}
\newcommand{\End}{\mathop{\mathrm{End}}\nolimits}
\newcommand{\FHilb}{\mathop{\mathrm{FHilb}}\nolimits} 
\newcommand{\PFHilb}{\mathop{\mathrm{PFHilb}}\nolimits}
\newif\if@borderstar
   \def\bordermatrix{\@ifnextchar*{%
       \@borderstartrue\@bordermatrix@i}{\@borderstarfalse\@bordermatrix@i*}%
   }
   \def\@bordermatrix@i*{\@ifnextchar[{\@bordermatrix@ii}{\@bordermatrix@ii[()]}}
   \def\@bordermatrix@ii[#1]#2{%
   \begingroup
     \m@th\@tempdima8.75\p@\setbox\z@\vbox{%
       \def\cr{\crcr\noalign{\kern 2\p@\global\let\cr\endline }}%
       \ialign {$##$\hfil\kern 2\p@\kern\@tempdima & \thinspace %
       \hfil $##$\hfil && \quad\hfil $##$\hfil\crcr\omit\strut %
       \hfil\crcr\noalign{\kern -\baselineskip}#2\crcr\omit %
       \strut\cr}}%
     \setbox\tw@\vbox{\unvcopy\z@\global\setbox\@ne\lastbox}%
     \setbox\tw@\hbox{\unhbox\@ne\unskip\global\setbox\@ne\lastbox}%
     \setbox\tw@\hbox{%
       $\kern\wd\@ne\kern -\@tempdima\left\@firstoftwo#1%
         \if@borderstar\kern2pt\else\kern -\wd\@ne\fi%
       \global\setbox\@ne\vbox{\box\@ne\if@borderstar\else\kern 2\p@\fi}%
       \vcenter{\if@borderstar\else\kern -\ht\@ne\fi%
         \unvbox\z@\kern-\if@borderstar2\fi\baselineskip}%
         \if@borderstar\kern-2\@tempdima\kern2\p@\else\,\fi\right\@secondoftwo#1 $%
     }\null \;\vbox{\kern\ht\@ne\box\tw@}%
   \endgroup
   }
\theoremstyle{plain}
\newtheorem{theorem}{Theorem}[section] 
\newtheorem{proposition}[theorem]{Proposition}
\newtheorem{lemma}[theorem]{Lemma}
\newtheorem{conjecture}[theorem]{Conjecture} 
\newtheorem{corollary}[theorem]{Corollary}
\theoremstyle{remark}
\newtheorem{remark}[theorem]{Remark} 
\newtheorem{example}[theorem]{Example}
\begin{document} 

\title[Almost-commuting partial GS resolutions and CM systems]{The regularity of almost-commuting partial Grothendieck--Springer resolutions and parabolic analogs of Calogero--Moser varieties} 
 
\author{Mee Seong Im}
\address{Department of Mathematical Sciences, United States Military Academy, West Point, NY 10996, USA}
\curraddr{Department of Mathematics, United States Naval Academy, Annapolis, MD 21402, USA}
\email{meeseongim@gmail.com}
\email{im@usna.edu}

\author{Travis Scrimshaw}
\address{School of Mathematics and Physics, The University of Queensland, St.\ Lucia, QLD 4072, Australia}
\email{tcscrims@gmail.com}

\keywords{Grothendieck--Springer resolution, moment map, complete intersection}
\subjclass[2010]{
Primary: 14M10, 
53D20, 
17B08,  
14L30. 
Secondary: 
14L24, 
20G20.}  

\date{\today}

\begin{abstract}
Consider the moment map $\mu \colon T^*(\mathfrak{p} \times \mathbb{C}^n) \to \mathfrak{p}^*$ for a parabolic subalgebra $\mathfrak{p}$ of $\mathfrak{gl}_n(\mathbb{C})$.
We prove that the preimage of $0$ under $\mu$ is a complete intersection when $\mathfrak{p}$ has finitely many $P$-orbits, where $P\subseteq \operatorname{GL}_n(\mathbb{C})$ is a parabolic subgroup such that $\operatorname{Lie}(P) = \mathfrak{p}$, and give an explicit description of the irreducible components.
This allows us to study nearby fibers of $\mu$ as they are equidimensional, and  one may also construct GIT quotients $\mu^{-1}(0) /\!\!/_{\chi} P$ by varying the stability condition $\chi$.  Finally, we study a variety analogous to the scheme studied by Wilson with connections to a Calogero--Moser phase space where only some of particles interact.
\end{abstract}

\maketitle

\section{Introduction}\label{section:intro} 

The classical Springer resolution and the Grothendieck--Springer resolution are foundational and important schemes in algebraic geometry and representation theory. 
The Springer resolution is a desingularization of the variety of nilpotent elements in a semisimple Lie algebra (see, \textit{e.g.},~\cite{MR2838836,MR739636}), while the Grothendieck--Springer resolution is the minimal, symplectic resolution of the variety of the semisimple Lie algebra, which contains the Springer resolution (see, \textit{e.g.},~\cite{MR2838836,MR3836769}).

We consider the case for $G = \GL_n(\CC)$, the general linear group of invertible complex $n \times n$ matrices, and let $\fg = \mathfrak{gl}_n(\CC) = \Lie(G)$, the Lie algebra of $G$ of all complex $n \times n$ matrices. 
Under the adjoint action of $G$ on $\fg$ (\textit{i.e.}, $G$ acts by conjugation), we have the natural adjoint quotient map $\rho \colon \fg \twoheadrightarrow \fg /\!\!/ G \cong \CC^n$, which sends $r$ to the tuple of coefficients of its characteristic polynomial $\chi_r(t)$. So we have that $r \mapsto \bigl( \tr(r), \dotsc, \det(r) \bigr)$, and these polynomials are invariant under the adjoint action.
Since $\rho^{-1}(0)$ consists of those $r\in \fg$ such that $\chi_r(t) = t^n$, the preimage $\rho^{-1}(0)$ is precisely the nilpotent elements in $\fg$.  We denote $\mathcal{N} := \rho^{-1}(0)$, the nilpotent cone of $\fg$. 

Let $P$ be a standard parabolic subgroup of $G$, which consists of invertible block upper triangular matrices, and let $\fp = \Lie(P)$, the Lie algebra of $P$, consisting of all block upper triangular matrices.
In particular, when all blocks have size $1$ then $P$ is the standard Borel subgroup $B$ of invertible upper triangular matrices, and 
$\fp$ is the standard Borel subalgebra $\fb$ of all upper triangular matrices. 
Let $\mathcal{N}_{P} =\fp \cap \mathcal{N}$ be the nilpotent cone of $\fp$. 
Define $G/P$ to be the partial flag (projective) variety parameterizing parabolic subalgebras in $\fg$. 
Let
\begin{align*}
\widetilde{\fg}_{P}&  := G \times_P \fp  = \{ (x,\fp' )\in \mathfrak{g}\times G/P \mid x \in \fp' \},
\\ \widetilde{\mathcal{N}}_{P} & := G \times_P \mathfrak{n}= \{ (x, \fp') \in \mathcal{N}_P \times G/P \mid x\in \fp' \}.
\end{align*}
Let $p \colon \widetilde{\mathcal{N}}_{P} \twoheadrightarrow \mathcal{N}_{P}$ 
be the partial Springer resolution   
(see \cite[Chapter $3$]{MR2838836} and \cite{MR739636} for the case of a full flag)  
and $\pi \colon \widetilde{\fg}_{P} \twoheadrightarrow \fg$ 
be the partial Grothendieck--Springer resolution (\cite{MR0442103,MR0352279,MR0430094}).
Then we have an inclusion 
$\iota \colon \widetilde{\mathcal{N}}_{P} \hookrightarrow \widetilde{\fg}_{P}$ 
such that 
$\pi \circ \iota = \gamma \circ p$, where 
$\gamma \colon \mathcal{N}_{P} \hookrightarrow \fg$  
is the natural inclusion. 
The nilpotent cone is resolved by the cotangent bundle $T^* (G/P)$, which is a closed subvariety of $\widetilde{\mathcal{N}}_P$.

In this manuscript, we consider a moment map associated to the partial Grothendieck--Springer resolution. We show that the preimage of $0$ under this map is a complete intersection when the parabolic subgroup has at most $5$ diagonal blocks. Such property is important and interesting since it provides the exact number of irreducible components in the special fiber. Furthermore, one can also construct topological fibers, analogous to the topological Springer fibers of type $A$ corresponding to nilpotent endomorphisms with two equally-sized Jordan blocks studied in \cite{MR1928174,MR2078414}, and study the topology and intersection of the cotangent bundle of the partial Grothendieck--Springer fibers. 
Let us make precise the construction of this moment map for the parabolic setting.

Using the trace pairing, we have the identifications $\fg \iso \fg^*$ and $\fp^* \iso \fg / \fu$, where $\fu$ is the maximal nilpotent subalgebra of $\fp$, \textit{i.e.}, strictly block upper triangular matrices. The latter pairing is given by $\fp\times \fg \to \CC$, $(r,s) \mapsto \tr(rs)$, where this map factors through the bilinear, nondegenerate pairing $\fp \times \fg / \fu \to \CC$.

Let $P$ act on the space $\fp \times \CC^n$, which we differentiate to obtain the comoment map
\[
\mu \colon T^*(\fp \times \CC^n) \to \fp^*.
\]
We can write the map $\mu$ as $(r, s, i, j) \mapsto \overline{[r,s] + ij}$ by identifying $T^*(\fp \times \CC^n) \cong \fp \times \fg/\fu \times \CC^n \times (\CC^n)^*$.
Note that since $P$ acts on $\fp \times \CC^n$, the $P$-action is induced onto $T^*(\fp \times \CC^n)$.
Now, let $G$ act on the first and the third factors of $G\times \fp\times \CC^n$, which also induces a moment map
\[
\mu_G \colon T^*(G \times \fp \times \CC^n) \to \fg^*.
\]
We also have a natural $P$-action on the second and the third factors of $G \times \fp \times \CC^n$, which is induced onto the cotangent bundle with moment map $\mu_P \colon T^*(G \times \fp \times \CC^n) \to \fp^*$.
Thus there is a natural $G\times P$-action on $G \times \fp \times \CC^n$. 
We combine the two maps $\mu_G$ and $\mu_P$ to obtain the moment map 
\[
\mu_{G\times P} \colon T^*(G\times \fp\times \CC^n) \to (\fg \times \fp)^*.
\]

Next, note that $T^*(G\times \fp\times \CC^n) \cong G\times \fg^* \times \fp\times \fp^*\times \CC^n \times (\CC^n)^*$, and consider
\begin{equation}
\label{eq:GxP_moment_zero_set}
\mu_{G\times P}^{-1}(0,\overline{0}) = \{ (g,\theta,r,s,i,j)\in T^*(G\times \fp \times \CC^n) \mid \theta = ij, \overline{g \theta g^{-1}} = -\ad_r^*(s) \}. 
\end{equation}
We may set $g=1$ since $g\in G$ is a free variable. 
Then there is a bijection between $P$-orbits on $\mu^{-1}(\overline{0})$ and  $G\times P$-orbits on $\mu_{G\times P}^{-1}(0,\overline{0})$, giving us an isomorphism between the quotient stacks $\mu^{-1}(\overline{0})/P$ and $T^*(\widetilde{\fg}_{P}\times \CC^n/G)$ (see~\cite[Cor.~3.3]{Nevins-GSresolutions}, and~\cite[Prop.~1.1]{MR3836769} when $P=B$). 
This gives us a connection between the Hamiltonian reduction of the enhanced $P$-moment map and the partial Grothendieck--Springer resolution. 
That is, studying the $G$-orbits on $T^*(\widetilde{\fg}_{P}\times \CC^n)$ is equivalent to studying the $P$-orbits on $T^*(\fp \times \CC^n)$.

\begin{theorem}\label{theorem:Grothendieck--Springer-resolution-complete-intersection} 
Let $\alpha = (\alpha_1, \dotsc, \alpha_{\ell})$ such that $\ell \leq 5$.
Let $P$ be the parabolic subgroup of $\GL_n(\CC)$ with block size vector $\alpha$.
The components of $\mu^{-1}(\overline{0})$ form a complete intersection.  
\end{theorem}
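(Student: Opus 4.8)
The plan is to reduce the complete-intersection assertion to a single dimension bound, and then to prove that bound by a \emph{finite} stratification of $\mu^{-1}(\overline{0})$ along the $\fp$-factor. Since $\mu^{-1}(\overline{0})$ is carved out of $T^{*}(\fp\times\CC^{n})$, a variety of dimension $2\dim\fp+2n$, by the $\dim\fp^{*}=\dim\fp$ scalar equations recorded in $\overline{[r,s]+ij}=\overline{0}$, Krull's height theorem already forces every irreducible component to have dimension at least $\dim\fp+2n$. So it suffices to establish the reverse inequality $\dim\mu^{-1}(\overline{0})\le\dim\fp+2n$: this simultaneously says that $\mu^{-1}(\overline{0})$ is equidimensional of the expected codimension $\dim\fp$, i.e.\ a complete intersection, and pins down its irreducible components as the closures of the top-dimensional strata appearing below.

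For the bound, I would use the projection $q\colon\mu^{-1}(\overline{0})\to\fp$, $(r,s,i,j)\mapsto r$, and stratify the target. Partition $\fp$ into locally closed \emph{types}, a type fixing the $P$-conjugacy class of the nilpotent part of $r$ and the position of its generalized eigenspaces relative to the flag, while letting the eigenvalues vary freely; along a type the quantities $\dim\mathfrak{z}_{\fp}(r)$, the degree $d_{r}$ of the minimal polynomial of $r$, and $e:=\#\{\text{distinct eigenvalues of }r\}$ are constant, and the type is a bundle of $P$-orbits of dimension $\dim\fp-\dim\mathfrak{z}_{\fp}(r)$ over an $e$-dimensional base, so $\dim(\text{type})=\dim\fp-\dim\mathfrak{z}_{\fp}(r)+e$. \emph{This is the only place where the hypothesis $\ell\le5$ is used}: by Hille--R\"ohrle, $\ell\le5$ is precisely the condition under which $P$ acts on the nilpotent cone $\mathcal{N}_{P}$ --- equivalently, on every fiber of the adjoint quotient $\fp\to\fp/\!\!/P\cong\CC^{n}$ --- with finitely many orbits, so only finitely many types occur; if $\ell\ge6$ the stratification is infinite and the argument collapses.

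Next I would compute the fiber of $q$ over $r$, namely $\{(s,i,j):[r,s]+ij\in\fu\}$. For fixed $(i,j)$ the admissible $s$ form $\varnothing$ or a coset of $\ker(\ad_{r}\text{ on }\fg/\fu)$, and since $\fg/\fu\cong\fp^{*}$ carries the coadjoint $\fp$-action, $\dim\ker(\ad_{r}\text{ on }\fg/\fu)=\dim\ker(\ad_{r}\text{ on }\fp)=\dim\mathfrak{z}_{\fp}(r)$. The set of admissible $(i,j)$ is $B_{r}:=\{(i,j):\overline{ij}\in\operatorname{im}(\ad_{r}\text{ on }\fg/\fu)\}$, which by the trace pairing (using $\fu^{\perp}=\fp$ and $(\operatorname{im}\ad_{r})^{\perp}=\mathfrak{z}_{\fg}(r)$, so that $(\fu+\operatorname{im}\ad_{r})^{\perp}=\mathfrak{z}_{\fp}(r)$) is the incidence locus $\{(i,j):jyi=0\ \forall\,y\in\mathfrak{z}_{\fp}(r)\}=\{(i,j):j|_{\mathfrak{z}_{\fp}(r)\cdot i}=0\}$. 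Hence the fiber over $r$ has dimension $\dim B_{r}+\dim\mathfrak{z}_{\fp}(r)$, and, using only the crude bound $\dim X\le\dim Y+\max_{y}\dim(\text{fiber})$ for $X\to Y$, the part of $\mu^{-1}(\overline{0})$ over a type with representative $r$ has dimension at most $\dim\fp+e+\dim B_{r}$. So everything reduces to $e+\dim B_{r}\le2n$. For this I would use $\CC[r]\subseteq\mathfrak{z}_{\fp}(r)$ to get $\dim B_{r}\le\max_{k}\bigl(\dim\{i:\dim\CC[r]i\le k\}+n-k\bigr)$, observe that a generalized-eigenspace decomposition presents $\{i:\dim\CC[r]i\le k\}$ as a \emph{finite} union of product subspaces $\prod_{\lambda}\ker N_{\lambda}^{a_{\lambda}}$ with $\sum_{\lambda}a_{\lambda}\le k$, so its dimension equals the corresponding ``water-filling'' value $f(k)$, and note that $f$ increases by at least one at each step until it reaches $n$ at $k=d_{r}$; hence $f(k)\le n-d_{r}+k$, so $\dim B_{r}\le2n-d_{r}$. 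Since $e\le d_{r}$ always, $e+\dim B_{r}\le2n$, every stratum contributes at most $\dim\fp+2n$, and --- because there are finitely many strata --- so does $\mu^{-1}(\overline{0})$.

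The step I expect to be the main obstacle is the set-up in the second paragraph: the stratification must be by genuine $P$-orbit data and not by $\GL_{n}(\CC)$-orbit data --- distinct $P$-orbits of one Jordan type can have $\fp$-centralizers of different dimensions, and the adjoint-quotient fibers of $\fp$ are not single $P$-orbits --- and it is exactly the Hille--R\"ohrle finiteness for $\ell\le5$ that converts the uniform per-stratum inequality into a bound on all of $\mu^{-1}(\overline{0})$. Granting this, the remaining work (and the route to the explicit description of the components promised in the introduction) is to track when the two estimates $e\le d_{r}$ and $\dim B_{r}\le 2n-d_{r}$ are both tight --- which forces $r$ semisimple and determines the admissible configurations of eigenspaces against the flag together with the allowed supports of $(i,j)$ --- and to read off the resulting finite list of top-dimensional strata whose closures are the irreducible components of $\mu^{-1}(\overline{0})$.
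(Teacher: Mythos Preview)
Your argument is correct and genuinely different from the paper's.  The paper follows Gan--Ginzburg's symplectic strategy: it shows that for each $P$-orbit $\mathcal{O}_r\subseteq\fp$, the locus $\fM(\mathcal{O}_r)=\{(r',s',i',j')\in\fM:r'\in\mathcal{O}_r\}$ is a finite union of conormal bundles $T^*_{\mathcal{O}}\fP$ to $P$-orbits $\mathcal{O}\subseteq\fP$, hence Lagrangian of dimension $d_\fp+n$; the zero fiber of $\mu\times\Lambda$ is then a finite union of such Lagrangians (finiteness via Hille--R\"ohrle and the Jordan $P$-semicanonical form), and a $\CC^*$-equivariance argument upgrades this single-fiber bound to flatness of $\mu\times\Lambda$, whence $\fM$ is a complete intersection.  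You instead bound $\dim\mu^{-1}(\overline{0})$ by pure linear algebra: projecting to the $r$-factor, you identify the fiber over $r$ with an affine-linear $s$-slice of dimension $\dim\mathfrak{z}_\fp(r)$ over the incidence variety $B_r=\{(i,j):j\perp\mathfrak{z}_\fp(r)\cdot i\}$, and then control $\dim B_r$ via the inclusion $\CC[r]\subseteq\mathfrak{z}_\fp(r)$ and a water-filling estimate on $\{i:\dim\CC[r]i\le k\}$, yielding $\dim B_r\le 2n-d_r$ and hence the stratum bound $\dim\fp+e+\dim B_r\le\dim\fp+2n$.

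What each approach buys: the paper's route is more conceptual and simultaneously establishes flatness of $\mu\times\Lambda$ and the Lagrangian structure of the fibers of $\overline{\Lambda}$, which are reused in Section~5.  Your route is more elementary---no symplectic input---and makes the later component analysis more transparent, since tracking equality in $e\le d_r$ and $\dim B_r\le 2n-d_r$ directly singles out the semisimple-$r$ strata with prescribed $\supp(i),\supp(j)$.  One caveat worth flagging: the finiteness statement you invoke (``$P$ has finitely many orbits on $\mathcal{N}_P$'', equivalently on each adjoint-quotient fiber) is strictly stronger than Hille--R\"ohrle's theorem as stated (which concerns orbits on $\fu$), and needs the Jordan $P$-semicanonical reduction to be justified; the paper uses exactly the same bridge, so this is not a defect relative to the paper, but you should be aware that the passage from $\fu$ to $\mathcal{N}_P$ is where the Jordan $P$-semicanonical form enters.
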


Next, we give a description of the irreducible components. 
Consider the scheme
\begin{equation}
\fM := \{(r,s,i,j) \in \fp \times \fp^* \times \CC^{n} \times (\CC^n)^* \mid \overline{[r,s] + ij} = \overline{0}\} = \mu^{-1}(\overline{0}), 
\end{equation}
where $ij$ denotes the product with $i$ as a column vector and $j$ as a row vector.
Let $\fD$ be the representatives $(r,s,i,j) \in \fM$ of the $P$-orbits, denoted by $\orbit{(r,s,i,j)}$, that are given by Lemma~\ref{lemma:rep_distinct_eigenvalues}.
We also require the set $\bfa_{\downarrow}$, which consists of the first $a_i$ entries of the $i$-th block of $P$ (see~\eqref{eq:updown_sets}).
Let $\supp(i)$ denote the support of $i \in \CC^n$.

\begin{theorem}\label{thm:irred-comp-itemized}
Let $\alpha = (\alpha_1, \dotsc, \alpha_{\ell})$ such that $\ell \leq 5$.
Let $P$ be the parabolic subgroup of $\GL_n(\CC)$ with block size vector $\alpha$.
Choose some $\bfa = (a_1, \dotsc, a_{\ell}) \in \NN^{\ell}$ such that $a_k \leq \alpha_k$ for all $1 \leq k \leq \ell$.
The $\prod_{i=1}^{\ell} (\alpha_i+ 1)$ irreducible components of $\fM$ are the closures of
\[
\fM'_{\bfa} = \bigcup \left\{ \orbit{(r,s,i,j)} \mid (r,s,i,j) \in \fD \text{ with } \begin{gathered} \supp(i) = \bfa_{\downarrow}, \\ \supp(j) = \{1, \dotsc, n\} \setminus \bfa_{\downarrow} \end{gathered} \right\}.
\]
Moreover, $\fM$ is reduced and equidimensional with $\dim \fM = d_{\fp} + 2n$, where $d_{\fp} = \dim \fp$.
\end{theorem}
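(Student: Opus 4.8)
The plan is to prove Theorem~\ref{thm:irred-comp-itemized} by analyzing the $P$-orbit structure on $\fM$ through the representatives supplied by Lemma~\ref{lemma:rep_distinct_eigenvalues}, and then using a dimension count to both identify the irreducible components and establish that $\fM$ is a reduced complete intersection. First I would observe that $\fM$ is cut out inside $\fp \times \fp^* \times \CC^n \times (\CC^n)^*$ by $d_{\fp} = \dim \fp = \dim \fp^*$ equations, namely the vanishing of $\overline{[r,s]+ij} \in \fg/\fu$; since the ambient space has dimension $2 d_{\fp} + 2n$, any irreducible component of $\fM$ has dimension at least $d_{\fp} + 2n$. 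By Theorem~\ref{theorem:Grothendieck--Springer-resolution-complete-intersection}, $\fM$ is a complete intersection, so in fact \emph{every} component has dimension exactly $d_{\fp}+2n$; this gives equidimensionality immediately. Reducedness would then follow from the complete intersection property once I check that $\fM$ is generically reduced along each component (it is Cohen--Macaulay by the complete intersection property, so has no embedded primes, and $R_0$ plus $S_1$ gives reducedness).

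Next I would turn to the decomposition into components. Using Lemma~\ref{lemma:rep_distinct_eigenvalues}, every point of $\fM$ lies in the $P$-orbit of some representative $(r,s,i,j) \in \fD$ with a prescribed, combinatorially controlled form; the key discrete invariant attached to such a representative is the pair $(\supp(i), \supp(j))$, or more precisely how $\supp(i)$ meets each diagonal block. I claim that for a representative in $\fD$, $\supp(i)$ is forced to be of the form $\bfa_\downarrow$ for a unique $\bfa = (a_1,\dots,a_\ell)$ with $a_k \le \alpha_k$ — this is where the explicit normal form from the lemma does the work, pinning the nonzero entries of $i$ to the "top" of each block — and correspondingly $\supp(j) = \{1,\dots,n\} \setminus \bfa_\downarrow$. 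This partitions $\fD$, hence $\fM$, into the locally closed pieces $\fM'_{\bfa}$ indexed by the $\prod_{i=1}^\ell(\alpha_i+1)$ choices of $\bfa$. I would then show each $\fM'_{\bfa}$ is irreducible: it is the image of an irreducible parameter space (the representatives in $\fD$ with the given support data form an irreducible family — essentially an affine space of eigenvalue and nilpotent-part parameters — and $P$ is irreducible, so the union of orbits is the irreducible image of $P \times (\text{parameter space})$ under the action map). Taking closures, $\closure{\fM'_{\bfa}}$ is irreducible of dimension $\le d_{\fp}+2n$; combined with the lower bound above and the fact that these pieces cover $\fM$, each $\closure{\fM'_{\bfa}}$ is a genuine component and these are all of them.

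The main obstacle, and the step requiring the most care, is verifying that the closures $\closure{\fM'_{\bfa}}$ are pairwise \emph{distinct} and that none is contained in another — equivalently, that the support pattern $(\supp(i),\supp(j))$ for a generic point of $\fM'_{\bfa}$ is not achievable as a limit from a different $\fM'_{\bfb}$. One direction is clear ($\supp(i)$ can only drop under specialization, and $\supp(j)$ likewise), but I need to rule out, say, $\closure{\fM'_{\bfb}} \supseteq \fM'_{\bfa}$ when $\bfa \neq \bfb$; here I would use the dimension count decisively: since all components have the same dimension $d_{\fp}+2n$, no component can be properly contained in another, so as soon as I know each $\closure{\fM'_{\bfa}}$ has dimension exactly $d_{\fp}+2n$ and is irreducible, distinctness is automatic provided the $\fM'_{\bfa}$ are nonempty and pairwise disjoint (which they are by construction, as the support data are mutually exclusive). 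Thus the real content is the dimension computation $\dim \fM'_{\bfa} = d_{\fp}+2n$, which I would carry out by computing the dimension of the stabilizer in $P$ of a generic representative with support $\bfa_\downarrow$ and adding $\dim P$ plus the dimension of the eigenvalue/normal-form parameter space, checking the total matches; the restriction $\ell \le 5$ enters only through its role in Theorem~\ref{theorem:Grothendieck--Springer-resolution-complete-intersection}, which underwrites the complete intersection and hence the uniform dimension bound.
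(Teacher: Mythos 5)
Your overall skeleton — use the representatives from Lemma~\ref{lemma:rep_distinct_eigenvalues}, stratify by support data, use the complete-intersection dimension bound from Theorem~\ref{theorem:Grothendieck--Springer-resolution-complete-intersection} to force equidimensionality, and finish reducedness via Cohen--Macaulay plus generic reducedness — matches the paper's strategy. However, there are two concrete gaps.

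First, you assert that ``every point of $\fM$ lies in the $P$-orbit of some representative $(r,s,i,j)\in\fD$,'' and hence that the $\fM'_{\bfa}$ \emph{partition} $\fM$. This is false: Lemma~\ref{lemma:rep_distinct_eigenvalues} and the set $\fD$ only capture points where $r$ has pairwise distinct eigenvalues, and even among those, the definition of $\fD$ allows $\supp(j)=\bfa'_{\uparrow}$ with $\bfa+\bfa' \leq \alpha$ strictly, i.e.\ $\supp(i)\cup\supp(j)\subsetneq\{1,\dotsc,n\}$, which is \emph{not} the condition defining $\fM'_{\bfa}$. Your claim that $\supp(j)$ is ``correspondingly $\{1,\dotsc,n\}\setminus\bfa_{\downarrow}$'' does not follow. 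The paper instead writes the decomposition
$\fM = \bigl(\bigcup_{\bfa}\fM'_{\bfa}\bigr)\sqcup\bigl(\bigcup_{\bfa}\fM''_{\bfa}\bigr)\sqcup\overline{\Lambda}^{-1}(\Delta)$
and then proves separately (Lemma~\ref{lemma:too_small_part} and the bound $\dim\overline{\Lambda}^{-1}(\Delta) < d_{\fp}+2n$ from Corollary~\ref{cor:complete_intersection}) that the two extra pieces have strictly smaller dimension, so their closures cannot be components. Your argument that the $\fM'_{\bfa}$ already cover $\fM$ silently skips these two dimension estimates, and without them you cannot conclude that the $\closure{\fM'_{\bfa}}$ exhaust the components.

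Second, you invoke Serre's criterion for reducedness but leave ``generically reduced'' as something you would ``check.'' This is exactly where the freeness of the $P$-action on $\fM'_{\bfa}$ (Lemma~\ref{lemma:open_irreducible_components}) is needed: it implies $\mu$ is a submersion at generic points, hence $\fM$ is generically reduced, and only then does Cohen--Macaulayness finish the argument. Your dimension computation for $\fM'_{\bfa}$ (via an orbit/stabilizer count) is also left at the level of a plan; the paper gets this cleanly from the identification $\fM'_{\bfa}\iso P\times_{S_{\bfa}}\bigl((\CC^n\setminus\Delta)\times\CC^n\bigr)$, which simultaneously yields connectedness, irreducibility, and the exact dimension $d_{\fp}+2n$.
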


Let us remark about why we must have at most $5$ blocks in $P$.
Our proof of Theorem~\ref{theorem:Grothendieck--Springer-resolution-complete-intersection} follows~\cite{GG06}, but the following theorem of  L.~Hille and G.~R\"ohrle is crucial in replacing~\cite[Lemma~2.1]{GG06} in the parabolic setting.

\begin{theorem}[{\cite[Thm.~4.1]{HR99}}]
\label{thm:Hille-Roehrle-block-vec}
Let $k$ be an infinite field. 
Let $P$ be a parabolic subgroup of $\GL_n(k)$ with block size vector $\alpha = (\alpha_1, \dotsc, \alpha_{\ell})$. 
Let $U$ be the unipotent radical of $P$, and $\fu = \Lie(U)$, the Lie algebra of $U$. 
Then the number of $P$-orbits on $U$ or $\fu$ is finite if and only if $\ell \leq 5$.
\end{theorem}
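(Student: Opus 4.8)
The plan is to translate the conjugation action of $P$ on $\fu$ (equivalently on $U$) into a question about indecomposable modules of a prescribed dimension vector over a fixed finite-dimensional quasi-hereditary algebra, and then to invoke the classification of when the associated category of filtered modules is representation-finite. The distinction between $U$ and $\fu$ is harmless: writing $u = 1 + x$ with $x \in \fu$ a block-strictly-upper-triangular matrix gives a bijection $U \to \fu$, $u \mapsto u - 1$, that is $P$-equivariant for the conjugation actions since $g(1+x)g^{-1} = 1 + gxg^{-1}$, so one may work with $\fu$. Fix the standard flag $0 = F_0 \subsetneq F_1 \subsetneq \dotsb \subsetneq F_{\ell} = k^n$ with $\dim(F_i/F_{i-1}) = \alpha_i$, so that $P = \{g \mid g F_i = F_i\}$ and $\fu = \{x \mid x F_i \subseteq F_{i-1}\}$. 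For $x \in \fu$ each $F_i$ is $x$-invariant and $x^{\ell} = 0$, so the triple $(k^n, x, F_\bullet)$ is a $k[t]/(t^{\ell})$-module with a submodule filtration satisfying $t\,F_i \subseteq F_{i-1}$, and two elements of $\fu$ are $P$-conjugate precisely when the corresponding triples are isomorphic. Recording each $F_i$ together with the induced map $x|_{F_i}\colon F_i \to F_{i-1}$, compatibly with the inclusions $F_{i-1}\hookrightarrow F_i$, identifies these triples with the $\Delta$-filtered modules, of $\Delta$-multiplicity vector $(\alpha_1,\dotsc,\alpha_{\ell})$, over a quasi-hereditary algebra $A_{\ell}$ depending only on $\ell$ (for the Borel, $\ell = n$, this is the Auslander algebra of $k[t]/(t^{n})$ and the correspondence is that of Br\"ustle, Hille, Ringel and R\"ohrle). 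As $\alpha$ ranges over length-$\ell$ compositions the $\Delta$-multiplicity vector ranges over $\NN^{\ell}$, and by Krull--Schmidt together with closure of the category $\mathcal{F}(\Delta)$ under direct summands, finitely many indecomposable objects of $\mathcal{F}(\Delta)$ forces finitely many isomorphism classes in each fixed dimension. Hence the claim becomes: $\mathcal{F}(\Delta)$ for $A_{\ell}$ has finitely many indecomposable objects if and only if $\ell \le 5$.

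For the ``only if'' direction it suffices to treat $\ell = 6$. For the Borel $B \subseteq \GL_6(k)$, with block vector $(1,1,1,1,1,1)$, one writes down an explicit one-parameter family $\{x_{\lambda}\}_{\lambda \in k}$ of pairwise non-$B$-conjugate elements of $\fu$, equivalently an infinite family of pairwise non-isomorphic indecomposable $\Delta$-filtered $A_6$-modules of $\Delta$-multiplicity vector $(1,\dotsc,1)$; over the infinite field $k$ this yields infinitely many orbits. For a general $P$ with block vector $\alpha$ of length $\ell \ge 6$, pass to the coarsening $P' \supseteq P$ with block vector $(\alpha_1,\dotsc,\alpha_5,\,\alpha_6+\dotsb+\alpha_{\ell}) =: (\beta_1,\dotsc,\beta_6)$; one checks $\fu_{P'} \subseteq \fu$ and that $\fu_{P'}$ is $P$-stable. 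Padding the family by the fixed $\Delta$-filtered $A_6$-module $\bigoplus_{i=1}^{6}\Delta_i^{\beta_i-1}$ (here $\beta_i \ge 1$) produces infinitely many non-isomorphic $\Delta$-filtered modules of $\Delta$-multiplicity vector $(\beta_1,\dotsc,\beta_6)$, hence infinitely many $P'$-orbits on $\fu_{P'}$, and therefore infinitely many $P$-orbits on $\fu$, since every $P'$-orbit is a union of $P$-orbits.

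The ``if'' direction is the technical heart, and the step I expect to be the main obstacle. Here I would analyze $\mathcal{F}(\Delta)$ for $A_{\ell}$ with Auslander--Reiten and covering-theoretic methods: by Ringel's theory it is a functorially finite subcategory closed under extensions and admitting almost split sequences, so representation-finiteness is governed by finiteness of its Auslander--Reiten quiver, which in turn is controlled by positivity of the Euler/Tits quadratic form built from the standard modules $\Delta_1,\dotsc,\Delta_{\ell}$. One shows this form is positive definite exactly when $\ell \le 5$ --- this is where the threshold $5$ enters, as the border between positive-definite and indefinite Tits forms --- and one can make the conclusion completely explicit by listing the finitely many indecomposable $\Delta$-filtered modules for $\ell = 2,3,4,5$ from the combinatorics of the staircase shape of $A_{\ell}$. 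By comparison, the reduction from $U$ to $\fu$, the module-theoretic dictionary, and the explicit $\ell = 6$ family are comparatively routine.
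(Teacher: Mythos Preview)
The paper does not prove this statement. Theorem~\ref{thm:Hille-Roehrle-block-vec} is quoted verbatim from \cite[Thm.~4.1]{HR99} and used as a black box throughout (in Proposition~\ref{prop:Lagrangian_subscheme}, Proposition~\ref{prop:flat_morphism}, Corollary~\ref{cor:complete_intersection}, and Lemma~\ref{lemma:fibers-dimension-pi}); there is no proof in the paper to compare against.

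That said, your sketch is a faithful outline of the actual Hille--R\"ohrle/Br\"ustle--Hille--Ringel--R\"ohrle argument: the identification of $P$-orbits on $\fu$ with isomorphism classes of $\Delta$-filtered modules of fixed $\Delta$-dimension vector over a quasi-hereditary algebra depending only on $\ell$, the explicit one-parameter family at $\ell=6$, and the analysis of $\mathcal{F}(\Delta)$ via the Tits form and Auslander--Reiten theory for $\ell\le 5$ are exactly the ingredients of \cite{HR99} and the companion paper of Br\"ustle, Hille, Ringel and R\"ohrle. Your reduction from general $\ell\ge 6$ to the six-block case is correct as stated (since $P\subseteq P'$ implies $\fu_{P'}\subseteq\fu_P$ and each $P'$-orbit on $\fu_{P'}$ is a union of $P$-orbits), and the padding by $\bigoplus_i\Delta_i^{\beta_i-1}$ together with Krull--Schmidt is the standard way to transport the Borel family to arbitrary block sizes. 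The ``if'' direction is, as you note, the substantive part; in \cite{HR99} it is carried out by explicitly listing the indecomposable $\Delta$-filtered modules (equivalently, the $P$-orbits) for $\ell\le 5$, with the Tits form providing the conceptual reason for the threshold.
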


We also require an analogous property of how for some $b \in \mathfrak{gl}_n(\CC)$, the Jordan canonical form of $b$ is a similar matrix (\textit{i.e.}, in the $G$-orbit of $b$) that naturally acts on the decomposition of $\CC^n$ into the generalized eigenspaces under the action of $b$.
Indeed, we give an algorithm that constructs one such matrix in the $P$-orbit, which we coin a Jordan $P$-semicanonical form of $b$.

For the case of $P = B$, T.~Nevins finds in~\cite{Nevins-GSresolutions} at least $2^n$ irreducible components in the locus $\mu^{-1}(\overline{0})$, which is defined by $n(n+1)/2$ equations, and conjectures that $\mu^{-1}(\overline{0})$ is a complete intersection. 
The first author has proved this conjecture when $\mu$ is restricted to its semisimple locus in~\cite{MR3312842,MR3836769}.
In this manuscript, we prove Nevins's conjecture for the entire preimage $\mu^{-1}(\overline{0})$ up to rank $5$ matrices due to the restriction in Theorem~\ref{thm:Hille-Roehrle-block-vec}.
Moreover, we note that Theorem~\ref{thm:irred-comp-itemized} previously appeared in~\cite{Nevins-GSresolutions} under the assumption that $\mu^{-1}(\overline{0})$ was a complete intersection.
The proof that we give of Theorem~\ref{thm:irred-comp-itemized} is different from the one given in~\cite{Nevins-GSresolutions} as we give an analogous proof to~\cite{GG06} instead of lifting up to the $\fg$ setting from $\fb$.

We note that Theorem~\ref{theorem:Grothendieck--Springer-resolution-complete-intersection} allows one to construct affine and geometric invariant theory (GIT) quotients (see, \textit{e.g.},~\cite{MR1315461, MR2537067}), and it would be interesting to show their connections to the Hilbert scheme $\Hilb^{n}(\CC^2)$ of $n$ points on a complex plane (see, \textit{e.g.},~\cite{GG06,MR1711344}), which is described as 
\[ 
\Hilb^n(\CC^2) = 
\{ 
I \subseteq \CC[r,s] \mid I \mbox{ is an ideal satisfying } \length(V(I)) = \dim_{\CC} (\CC[r,s]/I)=n 
\}, 
\]   
and the isospectral Hilbert scheme~\cite[Sec.~3.2]{MR1839919}, which is the reduced fiber product 
\[
(\Hilb^n(\CC^2)\times_{S^n\CC^2} (\CC^2)^n)^{\red},
\]  
where $S^n (\CC^2) = (\CC^2)^n / S_n$, unordered $n$-tuples of points in $\CC^2$. 
For notational purposes, we identify the closed points of $\Hilb^n(\CC^2)$ with ideals $I\subseteq \CC[r,s]$ satisfying $\dim_{\CC} (\CC[r,s]/I) = n$. 
That is, the Hilbert scheme $\Hilb^n(\CC^2)$ parameterizes finite closed subschemes of length $n$, while the reduced closed points of the isospectral Hilbert scheme are the tuples $(I,p_1,\ldots, p_n) \in \Hilb^n(\CC^2) \times (\CC^2)^n$ such that $\pi(I) = [[ p_1,\ldots, p_n]]$, where $\pi \colon \Hilb^n(\CC^2) \twoheadrightarrow S^n\CC^2$ is the Hilbert--Chow morphism associating a closed subscheme with its corresponding cycle. 
It would be interesting to construct a morphism between 
the Hamiltonian reduction of our parabolic moment map and noncommutative products of the Hilbert scheme $\Hilb^{\alpha_p}(\CC^2)$, 
as well as investigate connections between the isospectral Hilbert scheme and our variety.

It would also be interesting to show (rational) morphisms between the Hamiltonian reduction of our parabolic moment map and the partial flag Hilbert scheme on a complex plane 
\[ 
\PFHilb^{\ell}(\CC^2) = 
\left\{ 
I_{\ell} \subseteq \ldots \subseteq I_1 \subseteq I_0 = \CC[r,s] \mid \dim_{\CC} \frac{\CC[r,s]}{I_k} = \sum_{j=1}^k \alpha_j \right\}, 
\]  
which also may be considered in terms of lower block triangular matrices $P^-$ in $G$, \textit{i.e.}, given $\fp^- = \Lie(P^-)$ with $\mathfrak{u}^-$ as the maximal nilpotent subalgebra of $\fp^-$, 
\[ 
\PFHilb^n(\CC^2) = \{ (r,s,i)\in \fp^-\times \fp^-\times \CC^n \mid [r,s]=0, r^a s^bi \mbox{ span } \CC^n\}/P^-, 
\] 
 especially since both constructions involve $P$ (or $P^-$)-conjugation action on its subalgebra. 
Partial flag Hilbert schemes are fascinating in their own right, and are of great interest in categorical representation theory and quantum topology since (complete) flag Hilbert schemes give correspondence between Koszul complexes of the torus fixed points on the flag Hilbert scheme $\FHilb^n(\CC^2)$ and idempotents in the category of Soergel bimodules (see, \textit{e.g.},~\cite{gorsky2016flag}).

Our last main result is about the varieties analogous to the ones in~\cite{Wilson98}.
Indeed, let
\[
\overline{C}_n = \{ (r,s) \in \fp \times \fp^* \mid \rk([r,s] + \I_n) = 1 \},
\]
where $\I_n$ is the $n \times n$ identity matrix.
Let $C_n = \overline{C}_n / P$, where $P$ acts on the pair $(r,s)$ by a simultaneous conjugation. 
Let $C_n' = \{ [(r,s)] \in C_n \mid r \mbox{ is diagonalizable}\}$, which is a subspace of $C_n$. 
We show that $r \in C_n'$ implies that the eigenvalues of $r$ must be pairwise distinct (see Lemma~\ref{lemma:r-diagonalizable}).
Now let $\fP := \fp \times \CC^n$.
Consider the following subvariety 
\begin{equation}\label{eqn:Wilson} 
\widetilde{C}_n = \{ 
(r,s,i,j) \in T^*\fP \mid
\overline{[r,s] + \I_n} = -\overline{ij} \}.
\end{equation} 
We see that $\widetilde{C}_n = \mu^{-1}(-\I_n)$, where $\mu$ is the moment map~\eqref{eq:moment_map}.

\begin{remark}
Our moment map and subvariety $\widetilde{C}_n$ differs from the moment map and subvariety (after replacing $P$ with $G$) considered in~\cite{Wilson98} by $i \mapsto -i$. We do this to match the moment map from~\cite{GG06,Nevins-GSresolutions} when $P = B$ (see~\eqref{eq:moment_map} above). Note that this sign difference comes from the fact that~\cite[page $9$, lines $8$-$9$]{Wilson98} uses $\fg \times (\CC^n)^*$ 
in his study of Calogero--Moser systems 
instead of $\fg \times \CC^n$ as in~\cite{GG06}. 
\end{remark}

Note that for fixed $(r,s) \in \fp \times \fp^*$, there exist vectors $i$ and $j$ satisfying~\eqref{eqn:Wilson} if and only if $(r,s) \in \overline{C}_n$. 
That is, 
we claim that every rank $1$ matrix is of the form $ij$ for some column and row vector $i$ and $j$, respectively. 
Since $\tr([r,s] + \I_n) = n$, we must have $1 \leq \rk([r,s] + \I_n) = \rk(ij) \leq 1$ from~\eqref{eqn:Wilson}. Thus we have $\rk([r,s] + \I_n) = 1$, and we can compute $i$ and $j$ up to a common scalar multiple $\lambda$ by $i \mapsto \lambda i$ and $j \mapsto \lambda^{-1}j$. Therefore, we can identify $\overline{C}_n$ with the quotient of $\widetilde{C}_n$ by the scalar matrices in $P$ under the action~\eqref{eq:B_action}, and we obtain the same space $C_n$ as a quotient of $\widetilde{C}_n$ or of $\overline{C}_n$, \textit{i.e.}, since the $P$-action given by~\eqref{eq:B_action} above preserves $\widetilde{C}_n$, we will also write $C_n = \widetilde{C}_n / P$.

\begin{theorem}
\label{theorem:CM-smooth-irreducible-affine-var}
Let $\alpha = (\alpha_1, \dotsc, \alpha_{\ell})$ such that $\ell \leq 5$.
The scheme $C_n$ is a smooth irreducible affine algebraic variety of dimension $2n$. 
\end{theorem}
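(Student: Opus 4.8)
The plan is to mirror the Ginzburg--Gan argument for the $G$ case (and Wilson's original treatment) in the parabolic setting, using the structural results already available. First I would establish smoothness by showing that the moment map $\mu \colon T^*\fP \to \fp^*$ is submersive at every point of $\widetilde{C}_n = \mu^{-1}(-\I_n)$. Concretely, at a point $(r,s,i,j)$ the differential of $\mu$ fails to be surjective exactly when there is a nonzero $\xi \in \fp$ (identified via the trace pairing with a functional on $\fp^* \cong \fg/\fu$) annihilating the image; unwinding the pairing, this forces $\xi$ to commute with $r$ and with $s$ (modulo $\fu$) and to satisfy $\xi i = 0$, $j \xi = 0$. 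One then argues that such a $\xi$ must be nilpotent using $\tr([r,s]+\I_n)=n$: pairing the defining relation against $\xi$ and using $[\xi,r]\equiv 0$, $[\xi,s]\equiv 0 \pmod{\fu}$ together with $\xi i = j\xi = 0$ yields $\tr(\xi) = 0$, and iterating on powers of $\xi$ (which again commute appropriately and kill $i$, $j$) gives $\tr(\xi^k)=0$ for all $k\geq 1$, hence $\xi$ is nilpotent. Then the block-upper-triangular nilpotent $\xi$ together with the rank-one perturbation gives a contradiction via a standard ``no common kernel / no common cokernel'' argument on the generalized eigenspace decomposition, forcing $\xi = 0$. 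This shows $\widetilde{C}_n$ is smooth; since the scalars in $P$ act freely (the $\CC^n$-coordinate $i$ is nonzero on $\widetilde{C}_n$ as $\rk(ij)=1$) and the quotient $C_n = \widetilde{C}_n/P$ is geometric, $C_n$ is smooth of dimension $\dim\widetilde{C}_n - \dim P$.

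For the dimension count, I would use Theorem~\ref{theorem:Grothendieck--Springer-resolution-complete-intersection}: since $\mu^{-1}(\overline{0})$ is a complete intersection of the expected codimension $d_{\fp} = \dim\fp$ inside $T^*\fP$ (which has dimension $2d_{\fp}+2n$, so $\dim \mu^{-1}(\overline 0) = d_{\fp}+2n$ by Theorem~\ref{thm:irred-comp-itemized}), the map $\mu$ is flat over a neighborhood of $\overline 0$, equivalently all nearby fibers — in particular the fiber $\mu^{-1}(-\I_n) = \widetilde{C}_n$ — are equidimensional of the same dimension $d_{\fp}+2n$. (One checks $-\I_n \equiv \overline{0}$'s deformation lies in the flat locus, or invokes upper semicontinuity of fiber dimension together with the complete intersection property to conclude $\widetilde{C}_n$ is nonempty of pure dimension $d_{\fp}+2n$.) Dividing by the free $P$-action of dimension $d_{\fp}$ gives $\dim C_n = 2n$. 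Affineness follows since $\overline{C}_n \subseteq \fp\times\fp^*$ is affine and closed, and the quotient by the reductive-on-the-relevant-locus... — more carefully, $C_n = \overline{C}_n/P$ with $P$ acting with closed free orbits and trivial stabilizers on a suitable slice, so by descent $C_n$ is a quasi-affine variety; one then exhibits enough invariants (e.g. via the identification with a Hamiltonian reduction, or directly via the coordinate ring of $\widetilde{C}_n$ modulo the $P$-action) to conclude it is affine, as in Wilson's argument.

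The remaining point is irreducibility. Here I would follow the strategy of reducing to the diagonalizable locus: by Lemma~\ref{lemma:r-diagonalizable} (invoked in the excerpt), on $C_n'$ the eigenvalues of $r$ are pairwise distinct, and on this locus one can solve for $s$ and for the pair $(i,j)$ fairly explicitly — given $r$ with distinct eigenvalues, the off-diagonal entries of $s$ are determined by the rank-one condition and the diagonal of $s$ is free, so $C_n'$ is a tower of affine-space bundles over the space of regular semisimple elements of $\fp$ up to conjugacy, hence irreducible of the expected dimension $2n$. One then shows $C_n'$ is dense in $C_n$: since $C_n$ is smooth of pure dimension $2n$, every irreducible component has dimension $2n$, and a dimension count on the complement $C_n \setminus C_n'$ (where $r$ is non-diagonalizable, hence lies in a proper closed subvariety) shows it has dimension $< 2n$, so it cannot contain a whole component; therefore $\overline{C_n'} = C_n$ and $C_n$ is irreducible. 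The main obstacle I expect is the density/dimension estimate for the non-diagonalizable locus in the parabolic setting — in the $G$ case this is classical, but for $P$ one must control the interaction of Jordan block structure with the block-upper-triangular constraint, and this is precisely where the Jordan $P$-semicanonical form (and implicitly the $\ell \leq 5$ finiteness from Theorem~\ref{thm:Hille-Roehrle-block-vec}) does the work, ensuring the stratification of $\fp$ by $P$-conjugacy type is well-behaved enough to bound dimensions.
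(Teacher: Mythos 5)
Your overall strategy coincides with the paper's (both follow Wilson): smoothness of $\widetilde{C}_n$ via surjectivity of $d\mu$, freeness of the $P$-action to descend to $C_n$, and irreducibility by showing the diagonalizable locus is connected of full dimension while its complement has dimension $<2n$. Two of your local choices differ harmlessly from the paper's: for the annihilator of the image of $d\mu$ you argue via $\tr(\xi^k)=0$ and nilpotency, whereas the paper shows directly (Lemma~\ref{lemma:commute-scalar}, reducing to \cite[Lemma~1.3]{Wilson98}) that anything commuting with $r$ and $s$ at a point of $\widetilde{C}_n$ is scalar, after which $\xi i=0$ kills it; and for the dimension you route through flatness of $\mu$ and Theorem~\ref{theorem:Grothendieck--Springer-resolution-complete-intersection}, whereas the paper reads $\dim\widetilde{C}_n=d_{\fp}+2n$ off the submersion property and the implicit function theorem. (One small slip: to get $\dim C_n=\dim\widetilde{C}_n-\dim P$ you need all of $P$, not just its scalars, to act freely; this is Corollary~\ref{cor:free-action} and follows from the same commutant computation.)

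The genuine gap is in the irreducibility step. The assertion that the non-diagonalizable locus ``lies in a proper closed subvariety, hence has dimension $<2n$'' does not follow from properness or closedness alone: the locus of non-diagonalizable $r$ in $\fp$ has codimension one but very large dimension, so what must be bounded are the \emph{fibers} of the spectral map $\pi\colon C_n\to\CC^{(\alpha)}$ over the diagonal $\Delta$. This is exactly the content of the paper's Lemma~\ref{lemma:fibers-dimension-pi}, which rests on Lemma~\ref{lem:r0-fixed-n-dim-subvariety}: for $r_0$ fixed in Jordan $P$-semicanonical form, the projection of $\widetilde{C}_n(r_0)$ to the $(i,j)$-coordinates lands in an $n$-dimensional subvariety. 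That lemma requires a block-by-block analysis of $[r_0,s]+\I_n=-ij$ (trace identities along the subdiagonals of each Jordan $P$-block, the rank-one constraint forcing a single nonzero diagonal entry equal to $n$, and so on), together with the finiteness of $P$-orbits from Theorem~\ref{thm:Hille-Roehrle-block-vec} and the freeness of the centralizer action (Corollary~\ref{cor:fixed-r-orbit}) to convert the bound on $(i,j)$ into $\dim C_n(\mcO_r)\le n$, and hence $\dim\pi^{-1}(\Delta)\le(n-1)+n<2n$. You correctly flag this as ``the main obstacle,'' but the proposal does not supply the argument, so the irreducibility claim remains unproven as written.
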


Our proof of Theorem~\ref{theorem:CM-smooth-irreducible-affine-var} is given by extending the techniques of~\cite{Wilson98}, and again Theorem~\ref{thm:Hille-Roehrle-block-vec} and the Jordan $P$-semicanonical form play key roles.

Let us briefly discuss the relationship with Calogero--Moser particle systems (see, \textit{e.g.},~\cite{Calogero71,KKS78,Moser75,Moser81}).
We show that there exists a representative for every point $(r, s, i, j) \in C'_n$ such that $r = \diag(-\rho_1, \dotsc, -\rho_n)$ and
\[
\tr s^2 = \sum_{i=1}^n \sigma_i^2 - 2 \sum_{p<q \in I_k} (\rho_i - \rho_j)^{-2},
\]
where $I_k = \{\alpha_1 + \ldots + \alpha_{k-1}+1, \dotsc, \alpha_1 + \ldots + \alpha_k\}$, and in particular $\rho_i \neq \rho_j$ for all $i,j$ (Lemma~\ref{lemma:r-diagonalizable}).
Thus, $\frac{1}{2} \tr s^2$ is the sum (over $1 \leq k \leq \ell$) of Hamiltonians of Calogero--Moser particle systems with $\alpha_k$ particles. Equivalently, this is the Hamiltonian of a Calogero--Moser particle system where only certain particles (depending on $\alpha$) interact with each other. 
We remark that the existence of such a representative does not require that there are at most $5$ blocks in $P$.

We also conjecture that our results  do not require $\ell \leq 5$ (\textit{i.e.}, not relying on Theorem~\ref{thm:Hille-Roehrle-block-vec}); thus extending Nevins's conjecture for all parabolic subgroups.

\begin{conjecture}
Theorem~\ref{theorem:Grothendieck--Springer-resolution-complete-intersection}, Theorem~\ref{thm:irred-comp-itemized}, and Theorem~\ref{theorem:CM-smooth-irreducible-affine-var} hold for all compositions $\alpha$.
\end{conjecture}

This paper is organized as follows.
In Section~\ref{section:prelim}, we give some background on $P$-invariant functions on $\fp$. 
In Section~\ref{section:Jordan-P-form}, we give a notion of Jordan $P$-semicanonical form for matrices, which are crucial in the proof for Theorem~\ref{theorem:Grothendieck--Springer-resolution-complete-intersection}.
We prove Theorem~\ref{theorem:Grothendieck--Springer-resolution-complete-intersection} in Section~\ref{section:proof-gs-resoln-ci}, and 
we prove Theorem~\ref{thm:irred-comp-itemized} in Section~\ref{section:proof-irred-comp-itemized}.
We prove Theorem~\ref{theorem:CM-smooth-irreducible-affine-var} in Section~\ref{section:connections-to-CM-particles}.

\section*{Acknowledgements}

The first author would like to thank Thomas Nevins, Milen Yakimov, and Ben Elias for fruitful discussions.
The authors thank SageDays79 for the invitation and the Hebrew University of Jerusalem for the excellent working environment where this paper was initiated. 
T.S.\ would like to thank the United States Military Academy for its hospitality during his visit in October, 2018, where his visit was partially supported by M.S.I.'s AMS-Simons Travel Grant which is administered by the American Mathematical Society, with support from the Simons Foundation. 
The authors thank one of the referee for bringing a certain theorem, attributed to Kashin~\cite{Kashin90}, in the manuscript~\cite{PR97} by V.~Popov and G.~R\"ohrle to our attention on an earlier version of this manuscript. This led us to discover~\cite{HR99} by L.~Hille and G.~R\"ohrle and to correctly prove our results in all possible parabolic settings.
The authors also thank the other referee for the useful comments.
M.S.I.\ was supported by United States Military Academy and Hebrew University of Jerusalem.
T.S.\ was partially supported by the Australian Research Council grant DP170102648. 
This work benefited from computations using \textsc{SageMath}~\cite{sage}.

\section{Preliminaries}
\label{section:prelim}

Throughout this paper, we consider all groups and (Lie) algebras to be over $\CC$ unless otherwise stated.
Let $\NN$ be the set of all nonnegative integers (in particular, $0 \in \NN$).
Let $S_n$ denote the symmetric group on $n$ letters, and we will identify $S_n$ with the subgroup of all permutation matrices of $\GL_n$.
Therefore, we have a natural $S_n$-action on $\CC^n$ given by permuting coordinates.

Let $\alpha = (\alpha_1, \alpha_2, \dotsc, \alpha_{\ell})$ be a composition of $n$, that is $\alpha_i \in \ZZ_{>0}$ for all $i$ and $\sum_{i=1}^{\ell} \alpha_i = n$.
Let $\fp_{\alpha}$ be the set of block upper triangular matrices, where the $i$-th diagonal block has size $\alpha_i$, which is a standard parabolic subalgebra of $\fg = \mathfrak{gl}_n$.
Let $\fu_{\alpha}$ be the set of block strictly upper triangular matrices with block size vector $\alpha$, which is the nilradical of $\fp_{\alpha}$.
Let $\fP_{\alpha} := \fp_{\alpha} \times \CC^n$.
We will identify the cotangent bundle of $\fP_{\alpha}$ as $T^*\fP_{\alpha} \iso \fp_{\alpha} \times \fp_{\alpha}^* \times \CC^n \times (\CC^n)^*$, where $\fp_{\alpha}^* = \fg / \fu_{\alpha}$, and $(\CC^n)^*$ is dual to $\CC^n$.
We will consider vectors in $\CC^n$ as column vectors and vectors in $(\CC^n)^*$ as row vectors.
For an element $x \in \fg$, we write the corresponding element $\overline{x} \in \fg / \fu_{\alpha}$.
Let $P_{\alpha}$ 
be the parabolic subgroup of $G$ corresponding to $\fp_{\alpha}$ (\textit{i.e.}, we have $\Lie(P_{\alpha}) = \fp_{\alpha}$).
To ease notation, we will omit the subscript $\alpha$ on the above notation (\textit{e.g.}, we will simply write $\fp := \fp_{\alpha}$) when there is no danger of confusion.
Note that the standard Borel subalgebra $\fb$ (resp.\ subgroup $B$) of all (resp.\ invertible) upper triangular matrices corresponds to the case when $\alpha = (1, 1, \dotsc, 1)$.

Let $P$ act on the space $\fP$ via $b.(r,i) = (brb^{-1},bi)$.
The comoment map from differentiating the $P$-action is
\[ 
\fp = \Lie(P)\stackrel{a}{\rightarrow}\Gamma(T_{\fP}) \subseteq \CC[T^*\fP], 
\quad 
\mbox{ where }
\quad 
a(v)(r,i) = \frac{d}{dt}\bigl(\exp (tv).(r,i)\bigr) \bigg|_{t=0} = ([v,r], vi). 
\] 
Next, we dualize $a$ to obtain the moment map 
\begin{equation}
\label{eq:moment_map}
\mu \colon T^*\fP \to \fp^*,  
\quad \mbox{ where } 
\quad (r, \overline{s}, i, j) \mapsto \ad_r^*(s) + \overline{a^*(ij)} 
\end{equation}
and $a \colon \fg \to \End(\CC^n)$ is the natural $\fg$-representation on $\CC^n$. 
Since $\fg = \Lie(G)$, we pullback $a$ to obtain $a^* \colon \End(\CC^n)^* \to \fg^*$, where $a^*(ij)=ij$. 
Therefore, there is an induced Hamiltonian $P$-action on $T^*\fP$ given by
\begin{equation}
\label{eq:B_action} 
b.(r,s,i,j) = (\Ad_b(r), \Ad_b^*(s), bi, jb^{-1}) = (brb^{-1}, \overline{bsb^{-1}}, bi, jb^{-1})
\end{equation}
with moment map
\begin{equation}
\begin{split}
\mu \colon T^* \fP & \to \fp^*, 
\\ (r,s,i,j) & \mapsto \overline{[r,s] + ij}.
\end{split}
\end{equation}
There is also a $\fp$-action on $T^*\fP$ given by
\[
q \bullet (r,s,i,j) = ([q,r], [s,q], qi, -jq).
\]

Now, let $G$ act on the first and the third factors of $G\times \fp\times \CC^n$ as $g.(g',r,i) = (g' g^{-1},r,gi)$, which induces the moment map 
\[
\mu_G \colon T^*(G\times \fp \times \CC^n) \cong G \times \fg^* \times \fp \times \fp^* \times \CC^n \times (\CC^n)^* \to \fg^*, 
\quad 
(g,\theta,r,s,i,j) \mapsto -\theta + a^*(ij). 
\] 
We also have a natural $P$-action on the second and the third factors of $G \times \fp \times \CC^n$, given by $b.(g',r,i) = (g',brb^{-1},bi)$. This action is induced onto the cotangent bundle, where its moment map $\mu_P \colon T^*(G \times \fp \times \CC^n) \to \fp^*$ is given by $(g,\theta,r,s,i,j) \mapsto \overline{\Ad_g^*(\theta)} + \ad_r^*(s)$. 
Thus there is a natural $G\times P$-action on $G \times \fp \times \CC^n$. 
We combine the two maps $\mu_G$ and $\mu_P$ to obtain the moment map 
\[
\mu_{G\times P} \colon T^*(G\times \fp\times \CC^n)
\cong G\times \fg^* \times \fp\times \fp^*\times \CC^n \times (\CC^n)^*
\rightarrow (\fg \times \fp)^* \cong \fg^*\times \fp^*,
\]
which is given by 
\[ 
(g,\theta,r,s,i,j) \mapsto (-\theta+ a^*(ij), \: \overline{\Ad_g^*(\theta)}+ \ad_r^*(s) ).   
\] 
Thus, we obtain $\mu_{G\times P}^{-1}(0,\overline{0})$ in~\eqref{eq:GxP_moment_zero_set}.

Consider the scheme  
\[
\fM := \{(r,s,i,j) \in \fp \times \fp^* \times \CC^{n} \times (\CC^n)^* \mid \overline{[r,s] + ij} = 0\}.
\]
Then $\fM$ is equal to $\mu^{-1}(0) \subseteq T^*\fP$. 
Note that the $P$-action preserves $\fM$:
\begin{gather*}
[brb^{-1}, b^{-1}sb] + bijb^{-1} = b \bigl( [rs] + ij \bigr) b^{-1} = 0.
\end{gather*}

Let $\CC^{(k)} := \CC^k / S_k$, so $\CC^{(k)}$ is the set of all unordered tuples (\textit{i.e.}, multisets) of size $k$ of complex numbers.
For the composition $\alpha = (\alpha_1, \dotsc, \alpha_{\ell})$, denote $\CC^{(\alpha)} := \prod_{k=1}^{\ell} \CC^{(\alpha_k)}$.

\begin{lemma}
\label{lem:coadjoint-parabolic}
For the coadjoint action of $P_{\alpha}$ on $\fp_{\alpha}$, we have $\fp_{\alpha} /\!\!/ P_{\alpha} \cong \CC^{(\alpha)} \cong \CC^n$.
\end{lemma}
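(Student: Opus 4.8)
The plan is to reduce the computation to the Levi subalgebra and then invoke the classical case of $\GL_m$. Write $P = P_\alpha$, $\fp = \fp_\alpha$, $\fu = \fu_\alpha$, let $L \cong \prod_{k=1}^{\ell} \GL_{\alpha_k}$ be the Levi factor of $P$ (the block-diagonal invertible matrices), let $\mathfrak{l} = \Lie(L) \cong \bigoplus_{k=1}^{\ell} \mathfrak{gl}_{\alpha_k}$ be the block-diagonal part of $\fp$, and recall the Levi decomposition $P = U \rtimes L$ where $U$ has Lie algebra $\fu$. By the trace-form identification $\fg \cong \fg^*$, which is $G$-equivariant and restricts to a $P$-equivariant isomorphism of $\fp$ onto the annihilator $\fu^{\perp} \subseteq \fg^*$, the action under consideration is nothing but the conjugation action $b.r = brb^{-1}$ of $P$ on $\fp$, so we work with that. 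Let $\pi \colon \fp \to \mathfrak{l}$ be the projection onto the block-diagonal part, so $r = \pi(r) + r_+$ with $r_+ \in \fu$.

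First I would record that $\pi$ is $U$-invariant and $L$-equivariant: for $l \in L$ we have $lrl^{-1} = l\pi(r)l^{-1} + lr_+l^{-1}$ with the two summands lying in $\mathfrak{l}$ and $\fu$ respectively, so $\pi(lrl^{-1}) = l\pi(r)l^{-1}$; and for $u \in U$ one has $uru^{-1} - r \in \fu$ because $U$ acts trivially on $\fp/\fu \cong \mathfrak{l}$ (equivalently $[\fu,\fp] \subseteq \fu$ and $U = \exp\fu$), hence $\pi(uru^{-1}) = \pi(r)$. Writing a general element of $P$ as $lu$ and combining, $\pi$ is equivariant for the conjugation action on $\fp$ and the action on $\mathfrak{l}$ through $L$; consequently $\pi^*$ carries $\CC[\mathfrak{l}]^L$ into $\CC[\fp]^P$, and since $\pi$ restricts to the identity on $\mathfrak{l} \subseteq \fp$, the map $\pi^*$ is injective.

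The crucial step is the reverse inclusion: every $P$-invariant $f \in \CC[\fp]^P$ is of the form $\pi^*(g)$. Take the cocharacter $\lambda(t) = \diag(t^{N_1}\I_{\alpha_1}, \dotsc, t^{N_{\ell}}\I_{\alpha_{\ell}}) \in L$ with $N_1 > N_2 > \dotsb > N_{\ell}$. The $(p,q)$-block of $\lambda(t)r\lambda(t)^{-1}$ is $t^{N_p - N_q}$ times the $(p,q)$-block of $r$; it is unchanged when $p = q$, acquires a positive power of $t$ when $p < q$, and the blocks with $p > q$ vanish since $r \in \fp$. Hence $\lim_{t\to 0}\lambda(t)r\lambda(t)^{-1} = \pi(r)$. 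As $\lambda(t) \in P$ and $f$ is a $P$-invariant polynomial, $f(r) = f(\lambda(t)r\lambda(t)^{-1})$ for all $t \neq 0$, so letting $t \to 0$ gives $f(r) = f(\pi(r))$; thus $f = \pi^*(f|_{\mathfrak{l}})$, and $f|_{\mathfrak{l}} \in \CC[\mathfrak{l}]^L$ because $L$ preserves $\mathfrak{l}$. Therefore $\pi^* \colon \CC[\mathfrak{l}]^L \xrightarrow{\ \sim\ } \CC[\fp]^P$, whence $\fp /\!\!/ P \cong \mathfrak{l} /\!\!/ L$ (in particular the invariant ring is finitely generated, so the GIT quotient makes sense).

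It remains to identify $\mathfrak{l} /\!\!/ L$. Since $L = \prod_{k=1}^{\ell} \GL_{\alpha_k}$ acts factorwise, $\CC[\mathfrak{l}]^L \cong \bigotimes_{k=1}^{\ell} \CC[\mathfrak{gl}_{\alpha_k}]^{\GL_{\alpha_k}}$, and by the classical result for the adjoint action of $\GL_m$ — the coefficients of the characteristic polynomial (equivalently the power sums $\tr(x), \dotsc, \tr(x^m)$) freely generate the invariant ring and separate semisimple conjugacy classes — we get $\mathfrak{gl}_{\alpha_k} /\!\!/ \GL_{\alpha_k} \cong \CC^{\alpha_k} \cong \CC^{(\alpha_k)}$, the last isomorphism sending a matrix to its multiset of eigenvalues. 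Taking the product over $k$ yields $\fp_\alpha /\!\!/ P_\alpha \cong \prod_{k=1}^{\ell} \CC^{(\alpha_k)} = \CC^{(\alpha)} \cong \CC^{\alpha_1 + \dotsb + \alpha_{\ell}} = \CC^n$. I expect the contracting-cocharacter argument of the third step to be the one genuinely requiring care; the equivariance bookkeeping and the $\GL_m$ input are routine.
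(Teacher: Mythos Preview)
Your proof is correct and follows essentially the same approach as the paper: both use a contracting one-parameter subgroup $\lambda(t)$ in the center of the Levi to show that $P$-invariant functions on $\fp$ depend only on the block-diagonal part, and then invoke the classical description $\CC[\mathfrak{gl}_m]^{\GL_m} = \CC[\tr(x),\dotsc,\tr(x^m)]$ on each factor. Your write-up is somewhat more structured in separating the $L$-equivariance and $U$-invariance of $\pi$, but the key contracting-cocharacter step is identical to the paper's argument.
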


\begin{proof}
Recall that the adjoint action is given by $b.r = brb^{-1}$, where $b\in P_{\alpha}$ and $r \in \fp_{\alpha}$. For $1\leq k\leq \ell$,  
define $\pr_k \colon \fp_{\alpha} \to \fp_{\alpha_k}$, where $\pr_k(r) = r_k$, the projection of $r$ onto its $k$-th diagonal block.
Denote $\mcI := \{(k,\iota) \mid 1 \leq k \leq \ell \text{ and } 1 \leq \iota \leq \alpha_k\}$. 
For $(k,\iota) \in \mcI$, define 
$f_{k,\iota} \colon \fp_{\alpha_k} \to \CC$ as 
$f_{k,\iota}(r_k)=\tr(r_k^{\iota})$. 
Define $g_{k,\iota} := f_{k,\iota}\circ \pr_k$, and note $g_{k,\iota} \in \CC[\fp_{\alpha}]$. 
Since
\[
b.g_{k,\iota}(r) = g_{k,\iota}(b^{-1}.r) = g_{k,\iota}(b^{-1}rb)
= f_{k,\iota}(b_k^{-1} r_k b_k) 
= \tr\bigl( (b_k^{-1}r_kb_k)^{\iota} \bigr)
= \tr(b_k^{-1}r_k^{\iota}b_k)
=\tr(r_k^{\iota}),
\] 
where $b_k$ is the $k$-th diagonal block of $b\in P_{\alpha}$, 
the polynomials $\{ g_{k,\iota}\}_{(k,\iota) \in \mcI}$ are algebraically independent (see~\cite[Sec.~2.4]{Kraft-Procesi}) and satisfy $\CC[\{ g_{k,\iota}\}_{(k,\iota) \in \mcI} ]\subseteq \CC[\fp_{\alpha}]^{P_{\alpha}}$. 

Now, let $\lambda \colon \CC^* \to P_{\alpha}$ be a $1$-parameter subgroup defined as 
\begin{align*}
\lambda(t) & = \diag(\underbrace{t^{\ell-1}, \dotsc, t^{\ell-1}}_{\alpha_1 \text{ times}}, \underbrace{t^{\ell-2}, \dotsc, t^{\ell-2}}_{\alpha_2 \text{ times}}, \dotsc, \underbrace{t, \dotsc, t}_{\alpha_{\ell-1} \text{ times}}, \underbrace{1, \dotsc, 1}_{\alpha_{\ell} \text{ times}})
\\ & = \left(  
\begin{array}{c;{1pt/1pt}c;{1pt/1pt}c;{1pt/1pt}c;{1pt/1pt}c} 
t^{\ell-1} \I_{\alpha_1} & &  & & \\ 
\hdashline[1pt/1pt]
& t^{\ell-2} \I_{\alpha_2} &  & &  \\ 
\hdashline[1pt/1pt]
& & t^{\ell-3} \I_{\alpha_3} & & \\ 
\hdashline[1pt/1pt]
& & & \ddots & \\ 
\hdashline[1pt/1pt]
& & & & \I_{\alpha_{\ell}} \\ 
\end{array}
\right).
\end{align*}
Denote the block matrix version of $r$ with block size vector $\alpha$ as
\[
r = 
\left(  
\begin{array}{c;{1pt/1pt}c;{1pt/1pt}c;{1pt/1pt}c;{1pt/1pt}c} 
\bfr_{11} & \bfr_{12} & \bfr_{13} & \ldots& \bfr_{1\ell} \\ 
\hdashline[1pt/1pt]
0& \bfr_{22} & \bfr_{23} &\ddots & \bfr_{2\ell} \\ 
\hdashline[1pt/1pt]
0& 0& \bfr_{33} &\ddots & \bfr_{3\ell}\\ 
\hdashline[1pt/1pt]
\vdots  &\vdots  & \vdots & \ddots & \vdots\\ 
\hdashline[1pt/1pt]
0 & 0& 0& \ldots & \bfr_{\ell\ell} \\ 
\end{array}
\right),  
\]
then 
\[ 
\small 
\lambda(t).r
= \lambda(t) \cdot r \cdot \lambda(t)^{-1}
= 
\left(  
\begin{array}{c;{1pt/1pt}c;{1pt/1pt}c;{1pt/1pt}c;{1pt/1pt}c} 
\bfr_{11} & \bfr_{12} t & \bfr_{13} t^2 & \ldots& \bfr_{1\ell} t^{\ell-1}\\ 
\hdashline[1pt/1pt]
0& \bfr_{22} & \bfr_{23} t &\ddots & \bfr_{2\ell} t^{\ell-2} \\ 
\hdashline[1pt/1pt]
0& 0& \bfr_{33} &\ddots & \bfr_{3\ell} t^{\ell-3} \\ 
\hdashline[1pt/1pt]
\vdots  &\vdots  & \vdots & \ddots & \vdots\\ 
\hdashline[1pt/1pt]
0 & 0& 0& \ldots & \bfr_{\ell\ell} \\ 
\end{array}
\right),  
\] 
where $(r_{ij})_{\alpha_k\times \alpha_k}$ is the $k$-th diagonal block of $r$. 

Now, since 
\[ 
\small 
\lim_{t\rightarrow 0} \lambda(t).r = 
\left(  
\begin{array}{c;{1pt/1pt}c;{1pt/1pt}c;{1pt/1pt}c} 
\bfr_{11} & 0 &\ldots & 0\\ 
\hdashline[1pt/1pt]
 0& \bfr_{22} &\ddots &0 \\ 
\hdashline[1pt/1pt]
\vdots  &\vdots  & \ddots & \vdots\\ 
\hdashline[1pt/1pt]
0 & 0& \ldots & \bfr_{\ell\ell} \\ 
 \end{array}
\right),  
\] 
the $P_{\alpha}$-invariant polynomials are independent of the coordinate functions in the nilradical. Thus $\CC[\mathfrak{p}_{\alpha}]^{P_{\alpha}} \cong \CC[\{ \tr(r_k^{\iota})\}_{(k,\iota) \in \mcI}]$.  
\end{proof}

\begin{example}
Let us examine when $P = B$.
For the adjoint (resp.\ coadjoint) action of $B$ on $\fb$,
 we have $\fb /\!\!/ B \cong \Spec(\CC[\diag(r)]) \cong \CC^n$
by Lemma~\ref{lem:coadjoint-parabolic}.  
\end{example}

\section{The Jordan {\texorpdfstring{$P$}{P}}-semicanonical form}
\label{section:Jordan-P-form}

In this section, we will construct an analog of the Jordan canonical form for a matrix under the $P$-action.

Let us first consider the $n = 2$ case with $P = B$. So take
\[
r = \begin{pmatrix} r_{11} & r_{12} \\ 0 & r_{22} \end{pmatrix}\in \mathfrak{b}
\qquad \mbox{ and }\qquad
b = \begin{pmatrix} b_{11} & b_{12} \\ 0 & b_{22} \end{pmatrix}\in B. 
\]
Then we have
\begin{equation}
\label{eq:conj_rank2}
b r b^{-1} = \begin{pmatrix}
r_{11} & \frac{b_{12} (r_{22} - r_{11}) + b_{11} r_{12}}{b_{22}} \\
0 & r_{22}
\end{pmatrix}.
\end{equation}
We see that the diagonal entries $\{r_{11}, r_{22}\}$ are fixed under the $B$-conjugation action. Now, let us examine the upper right entry, and see how we can obtain
\[
b_{12} (r_{22} - r_{11}) + b_{11} r_{12} = 0.
\]
Note that we have multiplied the upper right entry by $b_{22}$ (recall that $b_{22} \neq 0$ because $b_{22}$ is a diagonal entry of $b\in B$). 
Therefore, if $r_{22} \neq r_{11}$, we can take $b_{12} =-  b_{11} r_{12} / (r_{22} - r_{11})$ (note we can have $b_{12} = 0$ since it is an off-diagonal entry). 
If $r_{11}=r_{22}$, then we must have $r_{12} = 0$ since $b_{11} \neq 0$. Therefore, we can classify the orbits $\orbit{r} = B r B^{-1}$ into the following 3 types: 
\begin{description}
\item[Distinct eigenvalues] $r_{11} \neq r_{22}$;
\item[Two Jordan blocks] $r_{11} = r_{22}$ and $r_{12} = 0$;
\item[One Jordan block] $r_{11} = r_{22}$ and $r_{12} \neq 0$.
\end{description}

To generalize this to arbitrary $n$, we need to weaken the classical notion of Jordan canonical form for $P$-orbits. We construct an embedding $\xi_{\bfa} \colon \mathfrak{gl}_m \to \mathfrak{gl}_n$ 
by considering an (ordered) subset $\bfa = \{ a_1 < a_2 < \dotsc < a_m \} \subseteq \{1,2, \dotsc, n\}$ and mapping a matrix $(x_{pq})_{p,q=1}^m$ to $(x'_{uv})_{u,v=1}^n$ by
\[
x'_{uv} = \begin{cases}
x_{pq} & \text{if $a_p = u$ and $a_q = v$ for some $1 \leq p \leq q \leq n$}, \\
1 & \text{if $u = v$ and there does not exists $p$ such that $a_p = u$}, \\
0 & \text{otherwise}.
\end{cases}
\] 
With a slight abuse of a notation, we will also write $\xi_{\bfa} $ for the embedding 
$\xi_{\bfa} \colon \GL_m \to \GL_n$  of Lie groups, 
which naturally restricts to an embedding of a parabolic subgroup of $\GL_m$ into a parabolic subgroup of $\GL_n$.

\begin{remark}
The embedding $\xi_{\bfa}$ corresponds to embedding $\GL_m$ along the roots
\[
\alpha_{a_1,a_2}, \alpha_{a_2,a_3}, \dotsc, \alpha_{a_{m-1},a_m},
\]
where $\alpha_{p,q} = \alpha_p + \alpha_{p+1} + \ldots + \alpha_q$.
Furthermore, the order of the eigenvalues can only change within a particular embedded $\GL_m$ block.
\end{remark}

We say that a matrix $M \in \fp$ is in \defn{Jordan $P$-semicanonical form} if there exists a (set) partition $\bfa^{(1)} \sqcup \bfa^{(2)} \sqcup \ldots \sqcup \bfa^{(k)} = \{1, 2, \ldots, n\}$ (recall that in a partition, we have $\bfa^{(j)} \neq \emptyset$ for all $j$) such that the following holds:
\begin{enumerate}
\item \label{cond:separate} if $j \neq j'$, then $M_{aa'} = 0$ for all $a \in \bfa^{(j)}$ and $a' \in \bfa^{(j')}$,
\item \label{cond:blocks} the diagonal blocks of $M$ are in Jordan canonical form, and
\item \label{cond:distinct} if $M_{aa} \neq M_{a'a'}$ for $a \in \bfa^{(j)}$ and $a' \in \bfa^{(j')}$, then $j \neq j'$.
\end{enumerate}
We call the set $\bfa^{(j)}$ a \defn{Jordan $P$-block} or simply a \defn{$P$-block} (or \defn{block} when $P$ is clear).

\begin{theorem}
\label{thm:Jordan_semicanonical_form}
For any block upper triangular matrix $r\in \fp$, there exists at least one matrix in Jordan $P$-semicanonical form in its orbit $\{ b r b^{-1} \mid b \in P \}$.
\end{theorem}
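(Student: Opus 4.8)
The plan is to proceed by induction on $n$, using the generalized eigenspace decomposition of $r$ (more precisely, of its action on $\CC^n$) together with a careful bookkeeping of which coordinate blocks of $P$ each eigenvalue ``lives in.'' First I would establish the base case $n=1$ trivially, and recall the $n=2$, $P=B$ analysis already worked out in the excerpt as the motivating model: the obstruction to full diagonalization is precisely a coincidence of eigenvalues within a single $\GL_2$-root block, and in that case one is forced into a Jordan block rather than being able to clear the off-diagonal entry.

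The key steps, in order, would be: (1) Write $r$ in block upper triangular form with diagonal blocks $\bfr_{kk} \in \mathfrak{gl}_{\alpha_k}$. Using only conjugation by the Levi factor (block-diagonal part of $P$), put each $\bfr_{kk}$ into Jordan canonical form; this is the classical Jordan form applied blockwise and uses no unipotent part. (2) Let $\lambda_1, \dotsc, \lambda_t$ be the distinct eigenvalues appearing among all diagonal blocks. For each coordinate $u \in \{1,\dotsc,n\}$, it now sits in a specific diagonal Jordan block with a well-defined eigenvalue; group the coordinates by eigenvalue to get a candidate partition refinement, but note we must further split each eigenvalue-class according to the classical Jordan-block structure where needed and also be careful that condition~\eqref{cond:separate} is about the \emph{off-block} entries of $M$ vanishing. (3) The heart of the argument: use conjugation by the unipotent radical $U$ of $P$ to kill the off-diagonal blocks $\bfr_{kj}$ (for $k < j$) \emph{except} where two coordinates $a \in$ block $k$ and $a' \in$ block $j$ carry the same eigenvalue. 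This is a triangular linear-algebra computation analogous to~\eqref{eq:conj_rank2}: conjugating by $1 + N$ with $N$ strictly block upper triangular, the leading effect on $\bfr_{kj}$ is $\bfr_{kj} + (\bfr_{kk} N_{kj} - N_{kj} \bfr_{jj}) + (\text{lower order})$, and the Sylvester operator $X \mapsto \bfr_{kk} X - X \bfr_{jj}$ is invertible on the part of the matrix space pairing eigenvalues $\lambda \neq \lambda'$. So an inductive clearing procedure — going block-distance by block-distance, i.e.\ first the entries $\bfr_{k,k+1}$, then $\bfr_{k,k+2}$, etc., to control the ``lower order'' feedback — removes all such entries. (4) What remains is a matrix $M$ whose nonzero off-diagonal structure only connects coordinates of equal eigenvalue; take $\bfa^{(1)}, \dotsc, \bfa^{(k)}$ to be the connected components of the resulting pattern (equivalently, refine the eigenvalue-classes as forced). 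Then condition~\eqref{cond:separate} holds by construction, \eqref{cond:distinct} holds since distinct diagonal entries were never joined, and for~\eqref{cond:blocks} I would do one more Levi-type cleanup within each $P$-block to restore Jordan canonical form (the clearing in step~(3) may have disturbed the within-block entries, but since a $P$-block is itself ``parabolic-shaped'' inside the relevant $\GL_m$, classical Jordan form within it is available via the embedding $\xi_{\bfa^{(j)}}$).

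I expect the main obstacle to be step~(3), specifically controlling the nonlinear ``lower order'' terms when conjugating by a non-infinitesimal unipotent element: one cannot simply invert a single Sylvester operator, but must set up a genuine induction on the ``distance'' $j-k$ between blocks (or on the nilpotency degree of $\fu$), clearing the nearest off-diagonal blocks first and showing that each subsequent conjugation only modifies strictly-farther blocks (up to already-cleared data), so the process terminates with all ``mixed-eigenvalue'' off-diagonal entries equal to zero. A secondary subtlety is that clearing within one eigenvalue-stratum must not reintroduce entries in another; this is handled by observing that the Sylvester operator respects the generalized-eigenspace grading, so the whole procedure block-decomposes along distinct eigenvalues and the strata never interfere. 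Once these two points are in place, the partition into $P$-blocks and the verification of \eqref{cond:separate}--\eqref{cond:distinct} are straightforward, and the theorem follows.
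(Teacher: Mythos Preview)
Your proposal is correct and follows the same overall strategy as the paper: act by the Levi to put each $\alpha$-diagonal block $\bfr_{kk}$ into Jordan form, take the partition $\bfa^{(j)} = \{a : r_{aa} = \lambda^{(j)}\}$ by eigenvalue, and then clear every strictly upper-triangular entry linking two distinct eigenvalues. The paper carries out your step~(3) more concretely: rather than Sylvester operators and induction on block-distance $j-k$, it conjugates entry-by-entry by the elementary unipotents $b_{aa'}(x) = \xi_{\{a,a'\}}\bigl(\begin{smallmatrix}1 & x\\ 0 & 1\end{smallmatrix}\bigr)$, processing the strictly upper-triangular positions in the order $(n-1,n),\,(n-2,n-1),\,(n-2,n),\,\ldots,\,(1,2),\,\ldots,\,(1,n)$ (rows bottom-to-top, left-to-right within each row). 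A four-case computation of $b_{aa'}(x)\, r\, b_{aa'}(x)^{-1}$ shows that each such conjugation alters only entries not yet visited, so the induction closes immediately with no higher-order feedback to manage. Your Sylvester/block-distance packaging is the standard structural version of exactly this computation and works equally well; the paper's version is just more explicit.

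One simplification: your step~(5) is unnecessary, and the worry behind it rests on a slight misreading. Condition~(b) concerns the $\alpha$-diagonal blocks $\bfr_{kk}$, not the Jordan $P$-blocks $\bfa^{(j)}$. Since your step~(3) conjugates only by elements of the unipotent radical $U \subseteq P$, and $U$-conjugation fixes the Levi quotient, each $\bfr_{kk}$ is untouched after step~(1). Hence after steps~(1)--(3) all three conditions already hold for the eigenvalue partition, and no final cleanup is required.
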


\begin{proof}
For $p < q$, define the matrix
\[
b_{pq}(x) := \xi_{\{p,q\}}\left( \begin{pmatrix} 1 & x \\ & 1 \end{pmatrix} \right) = \begin{pmatrix}
1 \\
& \ddots & & x \\
& & \ddots \\
& & & \ddots \\
& & & & 1
\end{pmatrix},
\]
where the entry for $x$ is in position $(p, q)$.

We give an algorithm to construct a Jordan $P$-semicanonical form of $r$.
Suppose $P$ has block size vector $\alpha = (\alpha_1, \alpha_2, \dotsc, \alpha_{\ell})$, and so it has a Levi subgroup of $L = \GL_{\alpha_1} \times \GL_{\alpha_2} \times \ldots \times \GL_{\alpha_{\ell}}$.
Suppose $r$ has distinct eigenvalues $\lambda^{(1)}, \dotsc, \lambda^{(k)}$.
First, note that $r$ is a block upper triangular matrix, and we can act by $L$ on each factor so that each diagonal block is in Jordan canonical form.
Thus our resulting matrix satisfies (\ref{cond:blocks}); in particular, we can now assume $r$ is upper triangular.

Next, consider the partition $\bfa^{(1)} \sqcup \bfa^{(2)} \sqcup \ldots \sqcup \bfa^{(k)} = \{1, 2, \ldots, n\}$ given by $\bfa^{(j)} = \{ a \mid r_{aa} = \lambda^{(j)} \}$.
Therefore, we have (\ref{cond:distinct}).
Lastly, our algorithm proceeds through entries in $r$ in the order of
\[
(n-1,n), (n-2,n-1), (n-2, n), \dotsc, (1,2), (1,3), \dotsc, (1,n)
\]
(\textit{i.e.}, we proceed row-by-row from left-to-right, bottom-to-top).
Suppose we are at step $(a, a')$.
If $r_{aa} = r_{a'a'}$, then we continue to the next step.
Otherwise we have $r_{aa} \neq r_{a'a'}$ (\textit{i.e.}, different eigenvalues), and we construct a new matrix
\[
r' := b_{aa'}(x) \cdot r \cdot b_{aa'}(x)^{-1}
\]
that we use in the next step.
To show~(\ref{cond:separate}), it is sufficient to show (by induction) $r'_{aa'} = 0$ and this conjugation does not change any previously set entry, nor any entry in $L$.
A direct computation shows that
\begin{equation}
\label{eq:U2_computation}
r'_{pq} = \begin{cases}
x (r_{a'a'} - r_{aa}) + r_{aa'} & \text{if } p = a, q = a', \\
r_{aq} + x r_{a'q} & \text{if } p = a, q > a', \\
r_{pa'} - x r_{pa} & \text{if } p < a, q = a', \\
r_{pq} & \text{otherwise}.
\end{cases}
\end{equation}
Thus, it is straightforward to see that $r'_{aa'} = 0$ and we do not change any previously set entry, nor any entry in $L$ (note that there is a non-zero entry at $(a,a')$ in $L$ if and only if $r_{aa} = r_{a'a'}$ as it is an off-diagonal entry in a Jordan block).
\end{proof}

\begin{example}
Let $P = B$.
It is possible that a $B$-orbit can contain multiple Jordan $B$-semicanonical forms. For example, both of the following matrices are in Jordan $B$-semicanonical form consisting of a single $B$-block:
\[
M = \begin{pmatrix}
0 & 1& 1& 0 \\ 
0 & 0& 0& 0 \\ 
0 & 0& 0& 1 \\ 
0 & 0& 0& 0 \\ 
\end{pmatrix},
\qquad\qquad
M' = \begin{pmatrix}
0 & 1& 0& 0 \\ 
0 & 0& 0& 1 \\ 
0 & 0& 0& 1 \\ 
0 & 0& 0& 0 \\ 
\end{pmatrix},
\]
but are in the same $B$-orbit as one can obtain $M'$ from $M$ by conjugation using 
\[
\begin{pmatrix}
1 & 0& 0& 0 \\ 
0 & 1& 1& 0 \\ 
0 & 0& 1& 0 \\ 
0 & 0& 0& 1 \\ 
\end{pmatrix} \in B.
\]
\end{example}

\section{Proof of Theorem~\ref{theorem:Grothendieck--Springer-resolution-complete-intersection}}
\label{section:proof-gs-resoln-ci}

Our proof of the complete intersection of the irreducible components of the preimage of $0$ under the Borel moment map closely follows the proof of~\cite[Thm.~1.1]{GG06}. 

Throughout this section, we assume that $\fp = \fp_{\alpha}$ has at most $5$ blocks, that is to say we fix an $\alpha = (\alpha_1, \alpha_2, \dotsc, \alpha_{\ell})$ with $\ell \leq 5$.
We note that Theorem~\ref{thm:Hille-Roehrle-block-vec} and the Jordan $P$-semicanonical form will assume the role of~\cite[Lemma~2.1]{GG06}.
Next, we note that
\[
d_{\fp} := \dim \fp = \binom{n+1}{2} + \sum_{k=1}^{\ell} \binom{\alpha_k}{2}.
\]

Let $\mcO_r$ be the $P$-orbit of $r$ in $\fp$, and define the reduced subscheme $\fM(\mcO_r) := \{ (r',s',i',j') \in \fM \mid 
r' \in \mcO_r \}$. 
We give the analog of~\cite[Prop.~2.4]{GG06}.

\begin{proposition}
\label{prop:Lagrangian_subscheme}
Given a $P$-orbit $\mcO_r \subseteq \fp$ of $r\in \fp$, the subscheme $\fM(\mcO_r)$ in $T^*\fP$ is Lagrangian.
\end{proposition}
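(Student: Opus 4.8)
The plan is to mimic the strategy of \cite[Prop.~2.4]{GG06}, which establishes that a single orbit-stratum of the moment-map fiber is isotropic of the expected (half) dimension. First I would fix a point $r \in \fp$, and by Theorem~\ref{thm:Jordan_semicanonical_form} I may assume $r$ is in Jordan $P$-semicanonical form (which only replaces $\fM(\mcO_r)$ by an isomorphic subscheme). Since $\fM(\mcO_r)$ fibers $P$-equivariantly over $\mcO_r \cong P/P_r$ (where $P_r$ is the stabilizer of $r$), it suffices to understand the fiber over the basepoint $r$, namely
\[
\fM_r := \{ (s,i,j) \in \fp^* \times \CC^n \times (\CC^n)^* \mid \overline{[r,s] + ij} = \overline{0} \},
\]
and then conclude via $\fM(\mcO_r) = P \times_{P_r} \fM_r$, so that $\dim \fM(\mcO_r) = \dim P - \dim P_r + \dim \fM_r$.

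Next I would analyze $\fM_r$ directly. The condition $\overline{[r,s]+ij}=\overline{0}$ means $[r,s]+ij \in \fu$; for fixed $(i,j)$ this is an inhomogeneous linear condition on $s \in \fg/\fu \cong \fp^*$, namely $\overline{\ad_r(s)} = -\overline{ij}$. The key linear-algebra computation is to show that the number of independent constraints this imposes, as $(i,j)$ ranges over the locus where a solution $s$ exists, combines with the dimension of that locus to give exactly $\dim \fM_r = d_\fp - \dim P + \dim P_r + 2n$ — equivalently $\dim \fM(\mcO_r) = d_\fp + n = \tfrac12 \dim T^*\fP$. One computes $\dim(\operatorname{image}\,\overline{\ad_r}) = d_\fp - \dim P_r$ (the orbit-dimension identity for the $P$-action on $\fp$, using the trace pairing to identify $\fp^*$ with $\fg/\fu$ and the standard fact that the centralizer dimensions match up), so the fiber of $s$ over a point in the image locus has dimension $\dim P_r$, while the locus of $(i,j)$ for which $-\overline{ij}$ lies in that image has dimension $2n - (\dim P - \dim P_r)$; adding gives $2n - \dim P + 2\dim P_r$, and then $\dim \fM(\mcO_r) = (\dim P - \dim P_r) + (2n - \dim P + 2\dim P_r) = 2n + \dim P_r$, which must be reconciled with the target $d_\fp + n$. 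I would carry out this bookkeeping carefully using the identification of the defect of $\overline{\ad_r}$ with both its kernel and (via the pairing) its cokernel.

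For the isotropy half, I would exhibit $\fM(\mcO_r)$ as (a dense open subset of, or exactly) the image of a conormal-type construction: the stratum is built from the conormal bundle to $\mcO_r$ in $\fp$ twisted by the extra $\CC^n \times (\CC^n)^*$ data, and the defining equation $\overline{[r,s]+ij}=\overline{0}$ is precisely the vanishing of the (restriction of the) Liouville/symplectic form's moment component, so the canonical $1$-form on $T^*\fP$ pulls back to zero on $\fM(\mcO_r)$ — this is the standard fact that moment-map fibers are coisotropic and their orbit-closures pick out Lagrangians, handled exactly as in \cite[Prop.~2.4]{GG06}. Combining "isotropic" with the dimension count "$=\tfrac12\dim T^*\fP$" gives Lagrangian. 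The main obstacle I anticipate is the dimension count: in the Borel case \cite{GG06} uses \cite[Lemma~2.1]{GG06} to control $\dim P_r$ and the geometry of nilpotent orbits, and here one must instead invoke Theorem~\ref{thm:Hille-Roehrle-block-vec} (finiteness of $P$-orbits on $\fu$, needing $\ell \le 5$) together with the Jordan $P$-semicanonical form to get a uniform handle on the stabilizer dimensions across all strata; verifying that the semicanonical form gives enough structure to run the GG06 argument block-by-block — rather than the clean eigenspace decomposition available over $\fg$ — is where the real work lies.
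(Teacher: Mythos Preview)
Your dimension count has a genuine gap, and it is not just bookkeeping. The map $(i,j)\mapsto -\overline{ij}$ from $\CC^n\times(\CC^n)^*$ to $\fp^*$ is quadratic and its image lies in the rank-$\le 1$ locus, so you cannot compute the dimension of the preimage of $\operatorname{image}(\overline{\ad_r})$ by subtracting a linear codimension from $2n$. Your formula $2n-(\dim P-\dim P_r)$ for that locus is not even the naive expected dimension (that would be $2n-\dim P_r$, subtracting the \emph{codimension} of the image), and in any case neither number is correct. Concretely, already for $n=2$, $P=B$, and $r$ regular semisimple diagonal, your recipe gives $\dim\fM(\mcO_r)=2n+\dim P_r=6$, whereas the correct value is $d_\fb+n=5$; a direct check shows the locus $\{\overline{ij}\in\operatorname{image}(\overline{\ad_r})\}$ has dimension $2$, not the $3$ your formula predicts. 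The discrepancy you flag (``must be reconciled'') is real and cannot be fixed by more careful bookkeeping with the same ingredients: the rank-one constraint on $ij$ interacts with the linear constraint in a way that your fibering over $(i,j)$ does not see.

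The paper's proof avoids this entirely by working one level up: rather than fibering $\fM(\mcO_r)$ over $\mcO_r\subseteq\fp$ and analyzing the fiber, it uses the general fact that for \emph{any} smooth $P$-variety $\fP$ the zero fiber of the moment map on $T^*\fP$ decomposes as $\mu^{-1}(0)=\bigcup_{\mcO} T^*_{\mcO}\fP$, a union of conormal bundles to $P$-orbits $\mcO\subseteq\fP$. Each conormal bundle is automatically Lagrangian, so no dimension count is needed. The only remaining point is that $\fM(\mcO_r)$ is a \emph{finite} such union, i.e., that over a fixed $\mcO_r\subseteq\fp$ there are only finitely many $P$-orbits in $\fp\times\CC^n$; this is where Theorem~\ref{thm:Hille-Roehrle-block-vec} enters. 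In short: fiber over orbits in $\fP=\fp\times\CC^n$, not over $\fp$; then the Lagrangian property comes for free from the conormal description, and Hille--R\"ohrle is used only for finiteness, not for any stabilizer-dimension estimate.
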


\begin{proof}
Assume that $\fP$ is an arbitrary smooth $P$-variety. Let $\Orb$ denote the set of all $P$-orbits of $\fM$, and for each orbit $\mcO \in \Orb$, let $T^*_{\mcO} \fP \subseteq T^*\fP$ denote the conormal bundle to $\mcO$. The natural $P$-action on $T^*\fP$ is Hamiltonian with moment map $\mu \colon T^*\fP \to \fp^*$,  
and we have
\begin{equation}\label{eqn:preimage-zero-conormal-bundle-orbits}
\mu^{-1}(0) = \bigcup_{\mcO \in \Orb} T^*_{\mcO} \fP
\end{equation}
since the conormal bundle to $\mcO$ is a subbundle of $T^*\fP|_{\mcO}$ consisting of cotangent vectors to $\fP$ that are zero on the cotangent bundle to $\mcO$.  Since $\mcO \subseteq \fM$, it follows that  $T_{\mcO}^*\fP \subseteq \mu^{-1}(0)$. 
Since $\mcO \in \Orb$, Equation~\eqref{eqn:preimage-zero-conormal-bundle-orbits} follows. 

Now, assume that $\fP = \fp \times \CC^n$.  
Since we can write $\mu^{-1}(0)$ as a union of conormal bundles to $\mcO$ as in~\eqref{eqn:preimage-zero-conormal-bundle-orbits}, 
given any point $(r',s',i',j')\in \fM(\orbit{r})$ and any pair of tangent vectors $X,Y$ in the tangent space $T_{(r',s',i',j')}\fM(\mcO)$, 
we have $\omega_{(r',s',i',j')}(X,Y) = 0$. That is, given a canonical symplectic form $\omega= \sum_{u \geq v} ds_{uv} \wedge dr_{vu}$, 
the restricted symplectic form 
$\omega|_{\fM(\mcO)}$  vanishes. 
Finally, given the projection map $\pi \colon \fp \times (\CC^n)^* \to \fp$ onto the first factor and a conjugacy class $\mcO_r \subseteq \fp$, the set $\pi^{-1}(\mcO_r)$ is a finite union of $P$-orbits from Theorem~\ref{thm:Hille-Roehrle-block-vec}. 
So $\fM(\orbit{r})$ is a finite union of $T^*_{\mcO} \fP$, where $\mcO\in \Orb$. 
Let us enumerate these orbits as $\mcO_{\eta}\in \Orb$, where $1\leq \eta \leq \ell$. 
Since $\dim \fM(\orbit{r}) 
= \dim T^*_{\orbit{\eta}} \fP    
= \frac{1}{2} \dim T^*\fP$, it follows that $\fM(\orbit{r})$ is Lagrangian.   
\end{proof}

Next, we have an analog of~\cite[Prop.~2.5]{GG06}, 
working with 
$\fp /\!\!/ P \cong \CC^{(\alpha)}$ (Lemma~\ref{lem:coadjoint-parabolic}).
For a matrix $r$, we let $\Spec(r)$ be the tuple of diagonal entries of any Jordan $P$-semicanonical form of $r$, which we then consider as an element in $\CC^{(\alpha)}$. In other words, $\Spec(r)$ is equivalent to the tuple of unordered tuples of eigenvalues (counted with multiplicities) of each $P_{\alpha}$-block of any Jordan $P$-semicanonical form.
We note from the properties of a Jordan $P$-semicanonical form that this is well-defined.

\begin{proposition}
\label{prop:flat_morphism}
Let $\alpha$ have length at most $5$.
Consider the map $\Lambda \colon T^*\fP \to \fp /\!\!/ P \cong \CC^{(\alpha)}$ given by
\[
(r,s,i,j) \mapsto \Spec(r).
\]
Then the morphism
\[
\mu \times \Lambda \colon T^*\fP \to \fp^* \times (\fp /\!\!/ P) 
\qquad \mbox{ given by }
\qquad 
(r,s,i,j)\mapsto ([r,s]+\overline{ij}, \: \Spec(r))
\]
is flat. Moreover, all nonempty (scheme-theoretic) fibers of this morphism have dimension $d_{\fp} + n$.
\end{proposition}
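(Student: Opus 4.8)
The plan is to follow the strategy of~\cite[Prop.~2.5]{GG06}. The key point is that flatness over a smooth base can be verified via a dimension count on fibers, using the ``miracle flatness'' criterion: a morphism from a Cohen--Macaulay scheme to a smooth (hence regular) scheme is flat if and only if all nonempty fibers have the same dimension, equal to $\dim(\text{source}) - \dim(\text{base})$. Here $T^*\fP$ is smooth (it is an affine space of dimension $2d_{\fp} + 2n$), hence Cohen--Macaulay, and the base $\fp^* \times (\fp/\!\!/P) \cong \CC^{d_{\fp}} \times \CC^n$ is smooth of dimension $d_{\fp} + n$. So it suffices to prove that every nonempty fiber of $\mu \times \Lambda$ has dimension exactly $d_{\fp} + 2n = (2d_{\fp} + 2n) - (d_{\fp} + n)$, which also gives the ``moreover'' clause. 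The upper bound $\dim(\text{fiber}) \geq d_{\fp} + 2n$ is automatic for every nonempty fiber since the source is irreducible of dimension $2d_{\fp} + 2n$ and the base has dimension $d_{\fp} + n$; the content is the upper bound $\dim(\text{fiber}) \leq d_{\fp} + 2n$.

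The main step, then, is bounding fiber dimensions from above. First I would reduce to the fiber over a point of the form $(\overline{0}, \chi) \in \fp^* \times \CC^{(\alpha)}$ is not quite right in general, but the essential observation is that $\Lambda^{-1}(\chi)$ is a union of $P$-orbit-stable pieces controlled by the Jordan $P$-semicanonical form (Theorem~\ref{thm:Jordan_semicanonical_form}): fixing the unordered tuple of eigenvalues per block, the matrices $r$ with $\Spec(r) = \chi$ form finitely many $P$-orbits by Theorem~\ref{thm:Hille-Roehrle-block-vec} applied blockwise together with the finite data of how the fixed eigenvalues group into Jordan $P$-blocks. Hence $\Lambda^{-1}(\chi)$ is a finite union of sets of the form $\mcO_r \times \fp^* \times \CC^n \times (\CC^n)^*$-slices cut down by the moment map condition. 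For each such orbit $\mcO_r$, the intersection of the fiber with $\{r' \in \mcO_r\}$ is (an open piece of) $\fM(\mcO_r)$ fibered over the $\mu$-value, so its dimension is bounded using Proposition~\ref{prop:Lagrangian_subscheme}: $\fM(\mcO_r)$ is Lagrangian, hence of dimension $d_{\fp} + n$, and the map $\fM(\mcO_r) \to \fp^*$ recording $\mu$ has fibers of dimension at most $(d_{\fp}+n) - \dim\mu(\fM(\mcO_r))$. One must check that $\mu$ restricted to $\fM(\mcO_r)$ has image of dimension $\dim \mcO_r - (\text{something})$... more precisely, the correct bookkeeping is: over a fixed $\chi$, summing the contribution $\dim \mcO_r + \dim(\text{conormal fiber}) = d_{\fp}+n$ and then the $\mu = \overline{0}$ (or general value) condition imposes, on the conormal directions, enough equations to leave dimension $d_{\fp} + 2n - (d_{\fp}+n) = \ldots$; I would carry this out exactly as in~\cite{GG06}, replacing their Lemma~2.1 (finiteness of $B$-orbits on a conjugacy class intersect $\fb$) with the combination of Theorem~\ref{thm:Hille-Roehrle-block-vec} and the Jordan $P$-semicanonical form, which together guarantee $\pi^{-1}(\mcO_r)$ is a finite union of $P$-orbits and that $\Lambda$ has the right fibers.

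The hard part will be the blockwise dimension estimate itself: ensuring that the finitely-many-orbits input genuinely controls $\Lambda^{-1}(\chi)$ uniformly, i.e.\ that as $\chi$ varies the number and dimensions of the relevant orbits behave so that the total fiber dimension never exceeds $d_{\fp}+2n$. Equivalently, one must rule out ``jumping'' where a special value of $\chi$ (many coincident eigenvalues, forcing large Jordan $P$-blocks) produces an anomalously large fiber. This is exactly where $\ell \leq 5$ enters: if there were infinitely many $P$-orbits on some $\fu_{\alpha_k}$-type piece, the fiber over a nilpotent $\chi$ could blow up. With finiteness in hand, I expect the estimate to go through by the same pigeonhole/conormal-bundle argument as~\cite[Prop.~2.5]{GG06}, using that each $\fM(\mcO_r)$ is Lagrangian of dimension exactly half of $\dim T^*\fP$, and that the fiber of $\mu \times \Lambda$ meets each such Lagrangian piece in a set of dimension at most $d_{\fp} + 2n - (d_{\fp}+n) = n$ fibered appropriately — and the finitely many pieces cannot increase the dimension beyond the bound. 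Once the uniform bound $\dim(\text{fiber}) \leq d_{\fp}+2n$ is established, miracle flatness finishes the proof.
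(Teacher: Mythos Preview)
Your overall strategy via miracle flatness is the right one and matches the paper's approach in spirit, but there are two genuine problems.

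First, there is a persistent arithmetic slip: the expected fiber dimension is $(2d_{\fp}+2n)-(d_{\fp}+n)=d_{\fp}+n$, not $d_{\fp}+2n$. The number $d_{\fp}+2n$ is $\dim\fM=\dim\mu^{-1}(\overline{0})$, and you repeatedly conflate the two. Once you fix this, the Lagrangian input from Proposition~\ref{prop:Lagrangian_subscheme} gives exactly what is needed for the fiber over $(\overline{0},\chi)$: that fiber is $\overline{\Lambda}^{-1}(\chi)\subseteq\fM$, which is a finite union of the $\fM(\mcO_r)$ with $\Spec(r)=\chi$ (finiteness from Theorem~\ref{thm:Hille-Roehrle-block-vec} and the Jordan $P$-semicanonical form), and each $\fM(\mcO_r)$ is Lagrangian of dimension $d_{\fp}+n$. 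No further ``meets each Lagrangian piece in a set of dimension at most $n$'' bookkeeping is needed or even sensible.

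Second, and more seriously, you have no argument for fibers over $(c,\chi)$ with $c\neq\overline{0}$. Your attempt to run ``$\fM(\mcO_r)\to\fp^*$ recording $\mu$'' cannot work because $\fM(\mcO_r)\subseteq\mu^{-1}(\overline{0})$ by definition; the conormal-bundle decomposition is a feature of the zero level only. The paper closes this gap with a $\CC^*$-action: letting $\zeta\in\CC^*$ act by $\zeta.(r,s,i,j)=(\zeta r,\zeta s,\zeta i,\zeta j)$ on $T^*\fP$ and by $\zeta.(x,\chi)=(\zeta^2 x,\zeta\chi)$ on $\fp^*\times\CC^{(\alpha)}$, the map $\mu\times\Lambda$ is $\CC^*$-equivariant with both actions contracting to the origin. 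This reduces the flatness question entirely to the zero fiber (via a Rees-type/upper-semicontinuity argument, cf.~\cite[\S2.3.9]{MR2838836}), whose dimension you have already bounded. Without this reduction your proposal is incomplete; with it, the confused paragraph about general fibers can be deleted.
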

 
\begin{proof}
For any tuple $\vec{r} = (r_1, r_2, \dotsc, r_n) \in \CC^{(\alpha)}$, the set of all elements $r \in \fp$ such that 
$\Lambda(r,s,i,j) = \vec{r}$ (for fixed $s$, $i$, and $j$) is a finite union of $P$-conjugacy classes by considering the possible Jordan semicanonical $P$-block decompositions of $r$ and Theorem~\ref{thm:Hille-Roehrle-block-vec} implies there is only a finite number of $P$-orbits for any such block decomposition. Therefore, the zero fiber of $\mu \times \Lambda$, denoted by $\xi$, is equal to a finite union of Lagrangian subschemes $\fM(\mcO_r)$ from Proposition~\ref{prop:Lagrangian_subscheme}, where $\mcO_r$ is a conjugacy class of a nilpotent matrix $r\in \fp$. Hence, we have
\[
\dim \xi \leq \frac{1}{2} \dim T^* \fP = d_{\fp} + n = \dim T^* \fP - \dim \fp \times \CC^{(\alpha)}
\]
since Lagrangian subschemes can have dimension at most $\frac{1}{2} \dim T^* \fP$.

We define a $\CC^*$-action on $T^*\fP$ by scalar multiplication, \textit{i.e.,}\ we have~$\alpha.(r,s,i,j) = (\alpha r, \alpha s, \alpha i, \alpha j)$. 
Next let $\CC^*$ act on $\fp^* \times \CC^{(\alpha)}$ by $\zeta.(s, i) := (\zeta^2 s, \zeta i)$. 
Then the map $\mu \times \Lambda$ is a $\CC^*$-equivariant morphism since
\begin{align*}
(\mu \times \Lambda) \bigl( \zeta.(r,s,i,j) \bigr) & = (\mu \times \Lambda)(\zeta r, \zeta s, \zeta i, \zeta j)
\\ & = \bigl([\zeta r, \zeta s] + (\zeta i)(\zeta j), \Spec(\zeta r) \bigr)
\\ & = \bigl( \zeta^2 ([r,s] + ij), \zeta \Spec(r) \bigr)
\\ & = \zeta.\bigl([r,s] + ij, (r_{11}, \dotsc, r_{nn}) \bigr) = \zeta.(\mu \times \Lambda) (r,s,i,j). 
\end{align*}  
Let $\CC[T^*\fP]_{\leq i}$ be the set of polynomials of degree less than or equal to $i$. 
This forms an $\NN$-filtration on $\CC[T^*\fP]= \bigcup_{i=0}^{
\infty} \CC[T^*\fP]_{\leq i}$.   
Let $\widehat{\CC[T^*\fP]}$ be the set of all finite expressions of the form $\sum_i b_i \zeta^i$ with $b_i \in \CC[T^*\fP]_{\leq i}$.  
Since $\CC^*$-equivariance is well-defined under the limit as $\alpha \to 0$, 
we have a natural ring embedding $\CC[\zeta] \hookrightarrow \widehat{\CC[T^*\fP]}  
\iso \sum_{i \in \NN} \CC[T^*\fP]_{\leq i} \zeta^i \subseteq \CC[T^*\fP][\zeta]$,   
which gives a surjective morphism of algebraic varieties $f \colon \Spec(\widehat{\CC[T^*\fP]}) \twoheadrightarrow \CC$.

Since $\zeta$ is not a zero-divisor, $\widehat{\CC[T^*\fP]}$ is a torsion-free $\CC[\zeta]$-module. So 
$\widehat{\CC[T^*\fP]}$ is flat. Thus $\mu \times \Lambda$ is a flat morphism, 
and the dimension of any fiber of the map $\mu \times \Lambda$ is $d_{\fp} + n$ (also see~\cite[Sec.~2.3.9]{MR2838836}).
\end{proof}

Finally, we obtain analogs of~\cite[Cor.~2.6]{GG06} and~\cite[Cor.~2.7]{GG06}. Let $\overline{\Lambda} := \Lambda|_{\fM}$ be the restriction of $\Lambda$ to the closed subscheme $\fM$.

\begin{corollary}
The moment map $\mu$ is flat.
\end{corollary}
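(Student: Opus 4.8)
The plan is to deduce the flatness of $\mu$ from the flatness of $\mu \times \Lambda$ established in Proposition~\ref{prop:flat_morphism}, together with the structure of $\fp /\!\!/ P \cong \CC^{(\alpha)}$ from Lemma~\ref{lem:coadjoint-parabolic}. The key observation is that $\Lambda$ is itself a composite $T^*\fP \to \fp \to \fp /\!\!/ P$ which does not interact with the $\fp^*$-factor, so one can hope to recover flatness of $\mu$ by ``forgetting'' the $\CC^{(\alpha)}$-coordinate in a controlled way.

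First I would recall that by Proposition~\ref{prop:flat_morphism} the morphism $\mu \times \Lambda \colon T^*\fP \to \fp^* \times \CC^{(\alpha)}$ is flat with all nonempty fibers of dimension $d_{\fp} + n$. Composing with the projection $\pr \colon \fp^* \times \CC^{(\alpha)} \to \fp^*$ recovers $\mu$, so it suffices to show the composite of a flat morphism with $\pr$ is flat. This is not automatic for an arbitrary projection, so the right strategy is the standard ``miracle flatness'' (local criterion of flatness via fiber dimension): $T^*\fP$ is a smooth (in particular Cohen--Macaulay) variety, $\fp^*$ is smooth of dimension $d_{\fp}$, and the generic fiber $\mu^{-1}(\overline{0})$ is a union of Lagrangians and hence equidimensional of dimension $d_{\fp} + n = \dim T^*\fP - \dim \fp^*$ by Proposition~\ref{prop:Lagrangian_subscheme}. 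By the $\CC^*$-equivariance of $\mu$ (which scales $\fp^*$ by $\zeta^2$) exactly as in the proof of Proposition~\ref{prop:flat_morphism}, every nonempty fiber of $\mu$ degenerates to a subscheme of $\mu^{-1}(\overline{0})$, so all nonempty fibers of $\mu$ have dimension at most $d_{\fp} + n$; since $\mu$ is dominant onto the irreducible $\fp^*$, every fiber has dimension exactly $d_{\fp} + n$. Then miracle flatness (\textit{e.g.}, \cite[Thm.~23.1]{MR1011461}-style statement, or as used in \cite[Cor.~2.7]{GG06}): a morphism from a Cohen--Macaulay scheme to a regular scheme with all fibers of the expected dimension is flat.

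Alternatively — and this is closer to the argument in~\cite{GG06} — one can argue purely ring-theoretically: $\CC[T^*\fP]$ is a flat $\CC[\fp^* \times \CC^{(\alpha)}]$-module, and $\CC[\fp^* \times \CC^{(\alpha)}] = \CC[\fp^*] \otimes_{\CC} \CC[\CC^{(\alpha)}]$ is a free (hence flat) $\CC[\fp^*]$-module; flatness is transitive, so $\CC[T^*\fP]$ is a flat $\CC[\fp^*]$-module, which is precisely the flatness of $\mu$. The main subtlety to be careful about is that $\Lambda$ (equivalently, the coordinates $\Spec(r)$) genuinely factors through the polynomial ring $\CC[\fp /\!\!/ P] \cong \CC[\CC^n]$ identified in Lemma~\ref{lem:coadjoint-parabolic} as a free module over $\CC$, so that the tensor-product decomposition above is valid; this uses nothing beyond the invariant-theory computation already done.

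I expect the only real obstacle is bookkeeping: confirming that the flatness of $\mu \times \Lambda$ literally says $\CC[T^*\fP]$ is flat over $\CC[\fp^*] \otimes \CC[\CC^{(\alpha)}]$ (rather than over some quotient), which follows because $\mu \times \Lambda$ is dominant onto $\fp^* \times \CC^{(\alpha)}$ — indeed $\Lambda$ is surjective since every multiset of eigenvalues is realized by a block-diagonal matrix, and for any such $r$ the pair $(r, 0, 0, 0)$ lies over any prescribed $\Spec(r)$ while $\mu(r,0,0,0) = \overline{0}$, and $\CC^*$-equivariance spreads this out to all of $\fp^* \times \CC^{(\alpha)}$. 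Everything else is the transitivity of flatness (or miracle flatness), so the corollary is immediate once Proposition~\ref{prop:flat_morphism} is in hand.
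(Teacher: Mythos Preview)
Your second (``alternatively'') argument is exactly the paper's proof: $\mu = \pr_1 \circ (\mu \times \Lambda)$, the projection $\pr_1 \colon \fp^* \times \CC^{(\alpha)} \to \fp^*$ is flat because $\CC[\fp^*] \otimes_{\CC} \CC[\CC^{(\alpha)}]$ is free over $\CC[\fp^*]$, and flatness is transitive. So that part of your proposal matches the paper verbatim in spirit.

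Your first approach via miracle flatness contains a dimension slip worth flagging. The fiber $\mu^{-1}(\overline{0}) = \fM$ is \emph{not} a finite union of Lagrangians and does \emph{not} have dimension $d_{\fp}+n$; Proposition~\ref{prop:Lagrangian_subscheme} only says that each $\fM(\mcO_r)$ is Lagrangian, and $\fM$ is an infinite union of these over all $P$-orbits in $\fp$. The correct fiber dimension is $d_{\fp}+2n = \dim T^*\fP - \dim \fp^*$ (as in Corollary~\ref{cor:complete_intersection}). Once that number is fixed, the miracle-flatness argument does go through, using the $\CC^*$-equivariance to bound the dimension of the special fiber and lower-semicontinuity for the rest; but at that point you are essentially redoing the work of Proposition~\ref{prop:flat_morphism} rather than invoking it, so the transitivity route is cleaner.
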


\begin{proof}
The moment map $\mu$ is the projection of $\mu\times \Lambda$ onto the first factor $\pr_1 \colon \fp^*\times \CC^n \to \fp^*$. Since the composition $\pr_1 \circ \; (\mu\times \Lambda)$ is flat, $\mu$ is flat. 
\end{proof}

\begin{corollary}
\label{cor:complete_intersection}
The scheme $\fM$ is a complete intersection in $T^*\fP$ with $\dim \fM = d_{\fp} + 2n$. Moreover, the map $\overline{\Lambda} \colon \fM \to \CC^{(\alpha)}$ from Proposition~\ref{prop:flat_morphism} is a flat morphism with fibers of dimension $d_{\fp} + n$ that are Lagrangian subschemes in $T^*\fP$.
\end{corollary}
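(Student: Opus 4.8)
\textbf{Proof proposal for Corollary~\ref{cor:complete_intersection}.}
The plan is to extract the statement directly from Proposition~\ref{prop:flat_morphism} together with a standard fiber-dimension criterion for complete intersections. First I would invoke Proposition~\ref{prop:flat_morphism}: the morphism $\mu \times \Lambda \colon T^*\fP \to \fp^* \times (\fp /\!\!/ P)$ is flat with all nonempty fibers of dimension $d_{\fp} + n$. Since $\fM = (\mu \times \Lambda)^{-1}(\overline{0} \times \CC^{(\alpha)})$ is the preimage of a closed subscheme cut out by the $\dim \fp^* = d_{\fp}$ coordinate functions defining $\overline{0} \in \fp^*$, and since the source $T^*\fP$ is a smooth (hence Cohen--Macaulay) affine variety of dimension $2(d_{\fp} + n)$, it suffices to check that $\fM$ has dimension exactly $2(d_{\fp}+n) - d_{\fp} = d_{\fp} + 2n$; the reverse inequality $\dim \fM \geq d_{\fp} + 2n$ is automatic because $\fM$ is cut out by $d_{\fp}$ equations (the $d_{\fp}$ matrix entries of $\overline{[r,s]+ij}$) in a variety of dimension $2d_{\fp} + 2n$.

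For the upper bound $\dim \fM \leq d_{\fp} + 2n$, I would argue fiberwise over $\Lambda$. The restriction $\overline{\Lambda} = \Lambda|_{\fM} \colon \fM \to \CC^{(\alpha)}$ has fibers $\overline{\Lambda}^{-1}(\vec{r})$ which are exactly the fibers of $\mu \times \Lambda$ over $(\overline{0}, \vec{r})$, hence of dimension $d_{\fp} + n$ by Proposition~\ref{prop:flat_morphism} (when nonempty). Since $\dim \CC^{(\alpha)} = n$, the total space $\fM$ has dimension at most $(d_{\fp} + n) + n = d_{\fp} + 2n$. Combined with the lower bound this gives $\dim \fM = d_{\fp} + 2n$ and, because the codimension of $\fM$ in $T^*\fP$ equals the number of defining equations, $\fM$ is a complete intersection. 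Flatness of $\overline{\Lambda}$ follows from flatness of $\mu \times \Lambda$: restricting a flat morphism along the base change $\fp^* \times \CC^{(\alpha)} \hookleftarrow \{\overline{0}\} \times \CC^{(\alpha)}$ preserves flatness, and the composite $\fM \hookrightarrow T^*\fP \xrightarrow{\mu \times \Lambda} \fp^* \times \CC^{(\alpha)}$ factors through $\{\overline{0}\} \times \CC^{(\alpha)} \cong \CC^{(\alpha)}$ as $\overline{\Lambda}$.

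Finally, the Lagrangian claim for the fibers of $\overline{\Lambda}$ is essentially already in hand: a fiber $\overline{\Lambda}^{-1}(\vec{r})$ is, by the analysis in the proof of Proposition~\ref{prop:flat_morphism}, a finite union of subschemes of the form $\fM(\mcO_r)$ (one for each Jordan $P$-semicanonical block decomposition realizing the spectrum $\vec{r}$, using Theorem~\ref{thm:Hille-Roehrle-block-vec} to guarantee finiteness), and each such $\fM(\mcO_r)$ is Lagrangian in $T^*\fP$ by Proposition~\ref{prop:Lagrangian_subscheme}; since a finite union of Lagrangian subschemes is isotropic of the maximal dimension $\frac{1}{2}\dim T^*\fP = d_{\fp} + n$, which matches the fiber dimension computed above, the fiber is Lagrangian. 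The main obstacle is bookkeeping rather than a genuine difficulty: one must be careful that $\fM$ is the scheme-theoretic preimage (so that the number of defining equations is genuinely $d_{\fp}$, counted with the right multiplicities) and that the ``finite union of Lagrangians'' description from Proposition~\ref{prop:flat_morphism} applies to an arbitrary fiber of $\overline{\Lambda}$, not merely the zero fiber — but this is exactly where the $\ell \leq 5$ hypothesis, via Theorem~\ref{thm:Hille-Roehrle-block-vec}, does its work, and the $\CC^*$-equivariance already established in Proposition~\ref{prop:flat_morphism} lets one transport the zero-fiber argument to nearby fibers.
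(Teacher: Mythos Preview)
Your proposal is correct and follows essentially the same route as the paper: both deduce the complete intersection property and flatness of $\overline{\Lambda}$ from Proposition~\ref{prop:flat_morphism} via flat base change along $\{\overline{0}\}\times\CC^{(\alpha)}\hookrightarrow\fp^*\times\CC^{(\alpha)}$, and both obtain the Lagrangian claim by writing each fiber as a finite union of the subschemes $\fM(\mcO_r)$ (using Theorem~\ref{thm:Hille-Roehrle-block-vec}) and invoking Proposition~\ref{prop:Lagrangian_subscheme}. The only minor points the paper makes more explicit are the surjectivity of $\overline{\Lambda}$ (via $(r,0,0,0)\in\fM$) and the passage from the set-theoretic decomposition to the scheme-theoretic fiber's irreducible components; your closing remark about $\CC^*$-equivariance is unnecessary here, since the finite-orbit argument already applies directly to every fiber.
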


\begin{proof}
The restriction $\overline{\Lambda}$ is surjective since clearly $(r,0,0,0) \in \mu^{-1}(0)$ for any $r \in \fp$. Thus all fibers of $\overline{\Lambda}$ are nonempty.

Taking a flat base change with respect to the embedding $\{0\} \times \CC^n \to \fp^* \times \CC^n$ yields that the scheme $\fM = \mu^{-1}(0) = (\mu \times \Lambda)^{-1}(\{0\} \times \CC^n)$ is a complete intersection in $T^*\fP$ and that $\overline{\Lambda} \colon \fM \to \CC^n$ is flat. This implies that the dimension of any irreducible component of any fiber of this morphism $\overline{\Lambda}$ is $\dim T^* \fP - \dim \fp \times \CC^n = d_{\fp} + n$.

Consider a fiber $\xi = \overline{\Lambda}^{-1}(\Spec(r))$, and so for $(r,s,i,j) \in \xi$, the diagonal entries of $r$ are $\Spec(r)$. From Theorem~\ref{thm:Hille-Roehrle-block-vec}, there exists only finitely many conjugacy classes $\mcO_r$ in $\fp$, and so $\xi$ is a finite union of $\fM(\mcO_r)$. By Proposition~\ref{prop:Lagrangian_subscheme}, $\fM(\mcO_{r})$ is a Lagrangian subscheme. Next, any irreducible component of the corresponding scheme-theoretic fiber must be the closure of an irreducible component of $\fM(\mcO_r)$ for some conjugacy class $\mcO_r$ of $\fp$, and hence a Lagrangian subscheme, since every irreducible component of the scheme-theoretic fiber has dimension $\dim \fM(\mcO_r)$.
\end{proof}

We conclude that Corollary~\ref{cor:complete_intersection} yields Theorem~\ref{theorem:Grothendieck--Springer-resolution-complete-intersection}.

\section{Proof of Theorem~\ref{thm:irred-comp-itemized}}
\label{section:proof-irred-comp-itemized}

The results in this section are in~\cite{Nevins-GSresolutions} (with assuming Theorem~\ref{theorem:Grothendieck--Springer-resolution-complete-intersection}). We give a slightly different proof that instead closely follows~\cite{GG06} rather than lifting up to the $\fg$ setting.

For this section, we fix an $\alpha = (\alpha_1, \alpha_2, \dotsc, \alpha_{\ell})$ with $\ell \leq 5$ (as in the previous section). We will also consider $\alpha \in \NN^{\ell}$.
Let $e_{a}$ denote the unit vector with $1$ in the $a$-th entry and $0$ otherwise. 

We give the analog of~\cite[Lemma~2.8]{GG06}. Note that we have to give an (block) upper triangular version of~\cite[Lemma~2.8]{GG06}, so we have interchanged the roles of $i \leftrightarrow j$. This is~\cite[Eq.~(4.1)]{Nevins-GSresolutions} when $P = B$. Since this does not require the lift from~\cite[Lemma~4.1]{Nevins-GSresolutions}, we do not require~\cite[Lemma~2.3]{GG06}. Thus, in practice, ours is distinct as we do not lift to the $\fg$ and $G$ setting and instead work directly with the Borel subalgebra and the Borel subgroup. 

Define a partial order on $\NN^{\ell}$ by $u \leq v$ if $u_k \leq v_k$ for all $k$.
For an element $\bfa = (a_1, a_2, \dotsc, a_{\ell}) \in \NN^{\ell}$ such that $\bfa \leq \alpha$, define the sets
\begin{subequations}
\label{eq:updown_sets}
\begin{align}
\begin{split} \bfa_{\downarrow} & := \{1, 2, \dotsc, a_1\} \cup
  \{\alpha_1+1, \alpha_1+2, \dotsc, \alpha_1+ a_2\}
\\ & \hspace{20pt}  \cup \dotsc \cup \{\alpha_1 + \ldots + \alpha_{\ell-1} + 1, \dotsc, \alpha_1 + \ldots + \alpha_{\ell-1} + a_{\ell}\}, \end{split}
\\  \begin{split} \bfa_{\uparrow} & := \{\alpha_1-a_1+1, \alpha_1-a_1+2, \dotsc, \alpha_1\}
\\ & \hspace{20pt} \cup \{\alpha_1+\alpha_2-a_2+1, \alpha_1+\alpha_2-a_2+2, \dotsc, \alpha_1+\alpha_2\}
\\ & \hspace{20pt}  \cup \dotsc \cup \{n - a_{\ell} + 1, n - a_{\ell} + 2, \dotsc, n\}. \end{split}
\end{align}
\end{subequations}
Note that $\bfa_{\downarrow} \sqcup (\alpha - \bfa)_{\uparrow} = \{1, \dotsc, n\}$.

\begin{lemma}
\label{lemma:rep_distinct_eigenvalues}
Let $(\check{r},\check{s},\check{\imath},\check{\jmath}) \in \fM$ such that $\rho_p \neq \rho_q$ for all $1 \leq p < q \leq n$, where $(\rho_p)_{p=1}^n$ are the eigenvalues of $\check{r}$ (\textit{i.e.}, the eigenvalues of $\check{r}$ are pairwise distinct). Then the $P$-orbit of $(\check{r},\check{s},\check{\imath},\check{\jmath})$ contains a representative $(r,s,i,j)$ such that
\begin{enumerate}
\item\label{item:rep-dist-ev-1}  
$r = \diag(\rho_1, \rho_2, \dotsc, \rho_n)$ is diagonal, 
\item\label{item:rep-dist-ev-2} $i = \sum_{a \in \bfa_{\downarrow}} e_{a}$ and $j = \sum_{a' \in \bfa'_{\uparrow}} e_{a'}$ for some $\bfa, \bfa' \in \NN^{\ell}$ such that $\bfa + \bfa' \leq \alpha$, 
\item\label{item:rep-dist-ev-3} $s = (s_{pq})_{p,q=1}^n$ is given by
\[
s_{pq} = \begin{cases}
\quad\: \: \sigma_p & \text{if } p = q, \\
{\displaystyle \frac{1}{\rho_{p} - \rho_{q}}} & \text{if } p \in \bfa_{\downarrow}, q \in \bfa'_{\uparrow}, \text{ and } p > q, \\
\quad\:\: 0 & \text{otherwise},
\end{cases}
\]
for some $\sigma_1, \dotsc, \sigma_n \in \CC$.
\end{enumerate}
Conversely, given $(r,s,i,j)$ that satisfy these conditions for any choice of $\rho_1, \dotsc, \rho_n$, $\sigma_1, \dotsc, \sigma_n$, $\bfa'$, $\bfa$ (\textit{i.e.}, with $\rho_1, \dotsc, \rho_n$ pairwise distinct and $\bfa + \bfa' \leq \alpha$), we have $(r,s,i,j) \in \fM$ with $\supp(i) = \bfa_{\downarrow}$, and $\supp(j) = \bfa'_{\uparrow}$.
\end{lemma}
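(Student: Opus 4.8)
The plan is to use the Jordan $P$-semicanonical form (Theorem~\ref{thm:Jordan_semicanonical_form}) together with the hypothesis that the $\rho_p$ are pairwise distinct to first diagonalize $\check r$, and then exploit the residual stabilizer — the diagonal torus $T \subseteq P$ together with certain off-diagonal unipotent elements — to normalize $\check\imath$, $\check\jmath$, and finally $\check s$. First I would observe that since the eigenvalues of $\check r$ are pairwise distinct, any Jordan $P$-semicanonical form of $\check r$ has all diagonal blocks diagonal (no nontrivial Jordan block can occur, since a Jordan block forces a repeated eigenvalue), and condition~(\ref{cond:distinct}) in the definition forces each $P$-block to be a singleton; hence the Jordan $P$-semicanonical form is a genuine diagonal matrix. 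After reordering within each $\GL_{\alpha_k}$ Levi factor (which is allowed, as the Remark after the embedding $\xi_{\bfa}$ notes the order can be permuted within an embedded block, and here the whole Levi acts), I would arrange $r = \diag(\rho_1, \dotsc, \rho_n)$; this gives~(\ref{item:rep-dist-ev-1}).

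Next I would determine the stabilizer of $r$ in $P$. Since the $\rho_p$ are distinct, the centralizer of $r$ in $\GL_n$ is the full diagonal torus, so the stabilizer of $r$ in $P$ is exactly the diagonal torus $T$ (which lies in $P$). The remaining normalizations must therefore be done using $T$ together with the relation $\overline{[r,s] + ij} = \overline 0$ that cuts out $\fM$ — more precisely, I would use that conjugating by a unipotent $b_{pq}(x)$ moves $s$ but also changes $r$, so I must track how $r$ changes and then re-diagonalize; the combined effect on $(r,s,i,j)$ of "conjugate by $b_{pq}(x)$, then re-diagonalize by a torus element" is the tool for killing off-diagonal entries of $s$. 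For the vectors: using $T$, each nonzero entry of $i$ and of $j$ can be scaled to $1$, but not independently (the same torus acts on both), so I would instead argue that after using $T$ to scale, the pattern of supports is what matters. The key structural input is that the equation $\overline{[r,s] + ij} = \overline 0$, when $r$ is diagonal with distinct entries, forces: the $(p,q)$ entry of $[r,s]$ is $(\rho_p - \rho_q) s_{pq}$, so off the diagonal (modulo $\fu$, i.e.\ for $p \geq q$) we get $(\rho_p - \rho_q)s_{pq} + i_p j_q = 0$, hence $s_{pq} = -i_p j_q/(\rho_p - \rho_q)$ for $p > q$, and on the diagonal $i_p j_p = 0$ for all $p$. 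The condition $i_p j_p = 0$ means $\supp(i)$ and $\supp(j)$ are disjoint; I then need to show that using the torus and the unipotent-plus-retorus moves, the support of $i$ can be pushed to be "top-justified" within each block (i.e.\ equal to some $\bfa_{\downarrow}$) and that of $j$ "bottom-justified" ($\bfa'_{\uparrow}$), with $\bfa + \bfa' \leq \alpha$ forced by disjointness of supports within each block. I would then rescale so the nonzero entries of $i$ and $j$ are all $1$ (absorbing scalars appropriately, which is possible because within each block the relevant supports are disjoint so we can scale $i$-entries and $j$-entries separately using the block torus). Substituting these normalized $i,j$ into $s_{pq} = -i_pj_q/(\rho_p-\rho_q)$ gives exactly the formula in~(\ref{item:rep-dist-ev-3}) up to the sign convention; the diagonal entries $\sigma_p$ of $s$ are unconstrained by the moment map equation and remain free (one checks the torus and unipotent moves can be used to clear any other surviving off-diagonal entries of $s$ not in the stated support, again via the conjugate-then-retorus procedure, using distinctness of the $\rho$'s). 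The converse direction is a direct verification: given data of the stated form, compute $[r,s]$ entrywise — since $r$ is diagonal, $([r,s])_{pq} = (\rho_p - \rho_q)s_{pq}$, which for $p \in \bfa_{\downarrow}$, $q \in \bfa'_{\uparrow}$, $p > q$ equals $1 = i_p j_q$ by the definitions of $\bfa_{\downarrow}$, $\bfa'_{\uparrow}$ and disjointness, so $[r,s] + ij$ is supported in $\fu$ and hence $\overline{[r,s]+ij} = \overline 0$; the supports of $i$ and $j$ are read off immediately.

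The main obstacle I expect is the bookkeeping in the second step: showing that the off-diagonal unipotent conjugations (which simultaneously perturb $r$, $s$, $i$, $j$) can be composed with compensating torus elements to simultaneously (a) keep $r$ diagonal, (b) move $\supp(i)$ and $\supp(j)$ into the prescribed top/bottom-justified form within each block, and (c) clear all off-diagonal entries of $s$ outside the prescribed locus — all without undoing earlier normalizations. The right order of operations (analogous to the row-by-row sweep in the proof of Theorem~\ref{thm:Jordan_semicanonical_form}) and a careful induction on matrix entries will be needed; the distinctness of the $\rho_p$ is what makes every required division legal and is used at every step. I do not expect Theorem~\ref{thm:Hille-Roehrle-block-vec} to be needed here (indeed the statement of this lemma does not restrict $\ell$), only the existence of the Jordan $P$-semicanonical form and elementary linear algebra with the moment-map equation.
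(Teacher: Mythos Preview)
Your overall strategy matches the paper's: diagonalize $\check r$ via the Jordan $P$-semicanonical form, read off from the moment-map equation that $\supp(i)\cap\supp(j)=\emptyset$ and that the off-diagonal entries of $s$ are forced, then normalize $i$ and $j$. However, you overcomplicate the normalization step, and the ``main obstacle'' you anticipate is an artifact of choosing the wrong tool.

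Specifically, you propose to push $\supp(i)$ and $\supp(j)$ into top- and bottom-justified form using ``unipotent-plus-retorus moves''. This is unnecessary and would indeed be messy. The correct (and much simpler) tool is the permutation matrices inside each Levi factor $\GL_{\alpha_k}\subseteq P$: these keep $r$ diagonal (they merely permute the $\rho_p$'s within each block, which is harmless since the statement only asserts $r$ is \emph{some} diagonal matrix with the given eigenvalues) and simultaneously permute the entries of $i$ and $j$ within each block. This is precisely how the paper handles the general parabolic case, remarking that for $P=B$ the issue is vacuous since each block has size~$1$. Once you see this, there is no induction on matrix entries and no re-diagonalization step.

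Second, there is nothing to ``clear'' in $s$. After $r$ is diagonal and $i,j$ are normalized, the equation $(\rho_p-\rho_q)s_{pq}+i_pj_q=0$ (for $(p,q)$ not in $\fu$, $p\neq q$) \emph{determines} every off-diagonal entry of the block-lower-triangular representative of $s$; in particular $s_{pq}=0$ automatically whenever $i_p=0$ or $j_q=0$. Only the diagonal entries $\sigma_p$ remain free. So once $(r,i,j)$ are in the stated form, $s$ is already in the stated form---no further $P$-action is required. With these two simplifications your proof goes through and coincides with the paper's argument; you are also correct that Theorem~\ref{thm:Hille-Roehrle-block-vec} plays no role here.
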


Note that $s_{pp} = \sigma_p$ in Lemma~\ref{lemma:rep_distinct_eigenvalues}, but $\sigma_p$  are considered as free variables for the converse. 

\begin{proof}
From Theorem~\ref{thm:Jordan_semicanonical_form}, we can assume $r$ is diagonal, so we obtain~\eqref{item:rep-dist-ev-1}.
For simplicity, we consider $P = B$, but the general case is similar (note that we can permute the entries of $i$ and $j$ within a block $\GL_{\alpha_k} \subseteq P$ and $\bfa_{\downarrow} \cap \bfa'_{\uparrow} = \emptyset$). 

Next, we need to solve
\begin{equation}
\label{eq:zero_fiber_repr}
[r,s] + ij + \fu =
\begin{pmatrix}
0 & & & \\
(r_{22} - r_{11}) s_{21} & 0 & & \\
\vdots & \ddots & \ddots & & \\
 (r_{nn} - r_{11}) s_{n1} & \ldots & (r_{nn} - r_{n-1,n-1}) s_{n,n-1} & 0
\end{pmatrix}
+ ij + \fu = 0 + \fu.
\end{equation}
Let $\bfa = \supp(i)$ and $\bfa' = \supp(j)$.
Note that $ij$ is the matrix with a row $p$ given by $j$ scaled by $i_j$ if $p \in \bfa$ and is $0$ otherwise.
However, to have a solution to~\eqref{eq:zero_fiber_repr}, we must have $\bfa \cap \bfa' = \emptyset$ as otherwise there will be a nonzero entry along the diagonal of $ij$.

Next, by computing $br - rb = 0$, we see that the centralizer of $r$ is given by the diagonal matrices. Therefore, we can assume $i$ is a vector with entries $\{0,1\}$. Since $\supp(i) \cap \supp(j) = \emptyset$, we can (independently) scale all nonzero entries of $j$ to be $1$.
Since we are working in $\fg / \fu$, we do not care about the strictly block upper diagonal portion in~\eqref{eq:zero_fiber_repr}. Therefore, we have~\eqref{item:rep-dist-ev-2}. Next, solving for $(s_{pq})_{p,q=1}^n$ yields~\eqref{item:rep-dist-ev-3}. Note that we have also shown the converse statement of the lemma.
\end{proof}

We give the analog of~\cite[Lemma~2.9]{GG06}, which we split into the following two lemmas.
We note that the condition that $\supp(i)$ and $\supp(j)$ are disjoint is redundant from~\eqref{eq:zero_fiber_repr} as noted above, but we have included it for clarity.

Let $\Delta := \{(x_1,\ldots, x_n)\in \CC^n : x_i=x_j\mbox{ for some }i\not=j \}$ denote the big diagonal in $\CC^n$.
For some $\bfa = (a_1, \dotsc, a_{\ell}) \in \NN^{\ell}$ such that $\bfa \leq \alpha$, define the (parabolic) subgroup
\[
S_{\bfa} := S_{a_1} \times S_{\alpha_1-a_1} \times S_{a_2} \times S_{\alpha_2-a_2} \times \ldots \times S_{a_{\ell}} \times S_{\alpha_{\ell}-a_{\ell}} \subseteq S_n,
\]
where the first factor $S_{a_1}$ acts on the first $a_1$ elements, then the second factor $S_{\alpha_1-a_1}$ acts on the next $\alpha_1 - a_1$ elements, etc.

Let $\fD$ be the elements $(r,s,i,j) \in \fM$ such that
\begin{enumerate}
\item $r = (r_{pq})_{p,q=1}^n$ is a diagonal matrix with pairwise distinct eigenvalues $\rho_1, \dotsc, \rho_n$ (note $r_{pp} = \rho_p$ for all $1 \leq p \leq n$), 
\item $i = \sum_{a \in \bfa_{\downarrow}} e_{a}$ and $j = \sum_{a' \in \bfa'_{\uparrow}} e_{a'}$ for some $\bfa, \bfa' \in \NN^{\ell}$  
such that $\bfa + \bfa' \leq \alpha$,
\item $s = (s_{pq})_{p,q=1}^n$ is given by
\[
s_{pq} = \begin{cases}
\quad \: \: \sigma_p & \text{if } p = q, \\
{\displaystyle \frac{1}{\rho_{p} - \rho_{q}}} & \text{if } p \in \bfa_{\downarrow}, q \in \bfa'_{\uparrow}, \text{ and } p > q, \\
\quad\:\: 0 & \text{otherwise},
\end{cases}
\]
for some $\sigma_1, \dotsc, \sigma_n \in \CC$.
\end{enumerate}

\begin{lemma}
\label{lemma:open_irreducible_components}
Choose some $\bfa \in \NN^{\ell}$ such that $\bfa \leq \alpha$. Recall from Theorem~\ref{thm:irred-comp-itemized} that
\[
\fM'_{\bfa} = \bigcup \left\{ \orbit{(r,s,i,j)} \mid (r,s,i,j) \in \fD \text{ with } \begin{gathered} \supp(i) = \bfa_{\downarrow}, \\ \supp(j) = (\alpha - \bfa)_{\uparrow} \end{gathered} \right\}.
\]
Then $\fM'_{\bfa}$ is connected of $\dim \fM'_{\bfa} = d_{\fp} + 2n$, and both of the actions of $P$ and $\fp$ on $\fM'_{\bfa}$ are free.
\end{lemma}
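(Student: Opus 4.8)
The plan is to realize $\fM'_{\bfa}$ as the $P$-saturation of the explicit family of representatives produced by Lemma~\ref{lemma:rep_distinct_eigenvalues}, and then to read off all three assertions from the geometry of that family together with a short point-stabilizer computation. Write $\bfa' := \alpha - \bfa$, so that $\bfa_{\downarrow} \sqcup \bfa'_{\uparrow} = \{1, \dotsc, n\}$, and let $\mathcal{S} \subseteq \fM$ be the set of quadruples $(r,s,i,j) \in \fD$ with $\supp(i) = \bfa_{\downarrow}$ and $\supp(j) = \bfa'_{\uparrow}$; by the description recalled in the statement, $\fM'_{\bfa} = \bigcup_{x \in \mathcal{S}} \orbit{x} = P \cdot \mathcal{S}$. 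For a point of $\mathcal{S}$ the vectors $i,j$ are the fixed $\{0,1\}$-indicator vectors of $\bfa_{\downarrow}$ and $\bfa'_{\uparrow}$, $r = \diag(\rho_1, \dotsc, \rho_n)$ with $\rho \in \CC^n \setminus \Delta$, and $s$ is the matrix displayed in Lemma~\ref{lemma:rep_distinct_eigenvalues}, whose only free entries are the diagonal parameters $\sigma_1, \dotsc, \sigma_n$; conversely the ``converse'' half of that lemma says every such $(\rho,\sigma)$ with $\rho \notin \Delta$ produces a point of $\fM$ lying in $\mathcal{S}$. Hence $\mathcal{S} \iso (\CC^n \setminus \Delta) \times \CC^n$, a nonempty Zariski-open subset of $\CC^{2n}$, so $\mathcal{S}$ is irreducible of dimension $2n$.

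Next I would verify that the $P$- and $\fp$-stabilizers of an arbitrary $x = (r,s,i,j) \in \mathcal{S}$ are trivial. If $b \in P$ fixes $x$, then $brb^{-1} = r$ forces $b$ to centralize the regular semisimple diagonal matrix $r$, so $b = \diag(b_1, \dotsc, b_n)$; then $bi = i$ forces $b_a = 1$ for all $a \in \supp(i) = \bfa_{\downarrow}$ and $jb^{-1} = j$ forces $b_{a'} = 1$ for all $a' \in \supp(j) = \bfa'_{\uparrow}$, whence $b = \I_n$ since $\bfa_{\downarrow} \sqcup \bfa'_{\uparrow} = \{1,\dotsc,n\}$. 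The identical computation applied to $q \bullet (r,s,i,j) = ([q,r],[s,q],qi,-jq)$ shows the $\fp$-stabilizer of $x$ vanishes. Because $\fM'_{\bfa} = P \cdot \mathcal{S}$ and stabilizers are carried to conjugate (resp.\ $\Ad$-conjugate) subgroups along the $P$-action, both actions on all of $\fM'_{\bfa}$ are free. (Notably, neither this step nor the previous one uses Theorem~\ref{thm:Hille-Roehrle-block-vec}.)

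For dimension and connectedness, consider the action morphism $m \colon P \times \mathcal{S} \to T^*\fP$, $(b,x) \mapsto b.x$, whose image is exactly $\fM'_{\bfa}$. If $b.x = b'.x'$ with $x,x' \in \mathcal{S}$, then $c := b'^{-1}b$ carries the diagonal regular semisimple $r$-component of $x$ to that of $x'$, so $c$ normalizes the diagonal maximal torus $T$; writing $c = t\,P_w$ with $t \in T$ and $P_w$ a permutation matrix lying in $P$ (so $w$ preserves every diagonal block), the conditions $ci = i$ and $jc^{-1} = j$ force $t = \I_n$ and $w \in S_{\bfa}$. Thus the fibers of $m$ have cardinality at most $|S_{\bfa}|$, in particular are finite, so $\dim \fM'_{\bfa} = \dim(P \times \mathcal{S}) = d_{\fp} + 2n$. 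Finally $P$ is irreducible and $\mathcal{S}$ is irreducible by the first paragraph, so $P \times \mathcal{S}$ is irreducible and therefore so is its image $\fM'_{\bfa}$; in particular $\fM'_{\bfa}$ is connected.

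The step I expect to be the main obstacle is the identification of the finite residual group $S_{\bfa}$ in the last paragraph: one has to check carefully that an element of $P$ moving one point of $\mathcal{S}$ to another must be a monomial matrix whose underlying permutation both preserves every diagonal block (this is the structure of $N_P(T)$ inside the parabolic) and stabilizes the support sets $\bfa_{\downarrow}$ and $\bfa'_{\uparrow}$ — which, since within each diagonal block these two sets are complementary consecutive segments, pins the permutation down to $S_{\bfa}$ and forces the torus part to be the identity. Everything else is routine bookkeeping on top of Lemma~\ref{lemma:rep_distinct_eigenvalues} and the fact that the centralizer of a regular semisimple matrix is its maximal torus.
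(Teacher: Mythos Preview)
Your proposal is correct and follows essentially the same route as the paper: identify the slice $\mathcal{S}$ of representatives with $(\CC^n\setminus\Delta)\times\CC^n$ via Lemma~\ref{lemma:rep_distinct_eigenvalues}, compute the point-stabilizer inside the diagonal torus to obtain freeness, and recognize that the residual ambiguity in choosing a representative is the finite group $S_{\bfa}$. The paper packages the last step as the associated-bundle isomorphism $\fM'_{\bfa}\iso P\times_{S_{\bfa}}\bigl((\CC^n\setminus\Delta)\times\CC^n\bigr)$, whereas you unwind it as a fiber analysis of the action morphism $P\times\mathcal{S}\to T^*\fP$; these are the same argument. If anything, your stabilizer computation is slightly more careful than the paper's written version, since you invoke both $bi=i$ and $jb^{-1}=j$ to pin down all diagonal entries of $b$, while the paper's text mentions only $bi=i$ (the $j$-condition is implicitly needed when $\bfa_{\downarrow}\neq\{1,\dotsc,n\}$).
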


\begin{proof} 
Let $(r,s,i,j) \in \fM'_{\bfa}$ be a representative from Lemma~\ref{lemma:rep_distinct_eigenvalues} with $\supp(i) = \bfa_{\downarrow}$. To see that the isotropy (or stabilizer) group $\{b \in P \mid b.(r,i,j) = (r,i,j)\}$ is trivial, note that $brb^{-1} = r$ (equivalently $[b, r] = 0$) implies that $b$ is a diagonal matrix. Since $b$ is a diagonal matrix and $bi = i$, we must have that $b$ is the identity matrix. Similarly, the isotropy Lie algebra $\{b \in \fp \mid b.(r,i,j) = 0\}$ is trivial. Therefore, the same holds for $(r,s,i,j)$. Note that the representative $(r,s,i,j)$ is unique up to the natural $S_{\bfa}$-action since we cannot permute the diagonal blocks of $r$ by the $P$-action, nor can we permute the entries of $i$ (and similarly for $j$) by elements of the centralizer of $r$ (recall, this is the group of diagonal matrices). Indeed, if we act by any $b \in P$ that is not in $S_{\bfa}$ nor the centralizer of $r$, then $r$ is no longer a diagonal matrix, and we cannot change the support of $i$ when acting by a diagonal matrix (all of whose diagonal entries must be nonzero). Hence, we obtain
\[
\fM'_{\bfa} \iso P \times_{S_{\bfa}} \bigl( (\CC^n \setminus \Delta) \times \CC^n \bigr),
\]
where $S_{\bfa}$ acts diagonally on $(\CC^n \setminus \Delta) \times \CC^n$ (\textit{i.e.}, $\sigma.(x, y) = (\sigma.x, \sigma.y)$),
and the claim follows.
\end{proof}

\begin{lemma}
\label{lemma:too_small_part}
Choose some $\bfa \in \NN^{\ell}$ such that $\bfa \leq \alpha$. Let
\[
\fM''_{\bfa} := \bigcup \left\{ \orbit{(r,s,i,j)} \mid (r,s,i,j) \in \fD \text{ with } \begin{gathered} \supp(i) \subseteq \bfa_{\downarrow}, \; \bfa_{\downarrow} \cap \supp(j) = \emptyset, \\ \supp(i) \cup \supp(j) \neq \{1, \dotsc, n\} \end{gathered} \right\}.
\]
Then $\dim \fM''_{\bfa} < d_{\fp} + 2n$.
\end{lemma}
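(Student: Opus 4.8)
The plan is to bound $\dim \fM''_{\bfa}$ by exhibiting $\fM''_{\bfa}$ as a finite union of pieces, each of which is a $P$-sweep of a space of representatives analogous to Lemma~\ref{lemma:rep_distinct_eigenvalues}, and then counting dimensions piece by piece. Concretely, a representative in $\fD$ contributing to $\fM''_{\bfa}$ is determined by the data of the pairwise-distinct eigenvalues $\rho_1,\dotsc,\rho_n$ (an element of $\CC^n \setminus \Delta$, so contributing $n$), the diagonal entries $\sigma_1,\dotsc,\sigma_n$ of $s$ (contributing $n$), and the choice of supports $\supp(i) = \bfb_{\downarrow} \subseteq \bfa_{\downarrow}$ and $\supp(j) = \bfb'_{\uparrow}$ with $\bfb'_{\uparrow}$ disjoint from $\bfa_{\downarrow}$; by the defining condition, $\supp(i) \cup \supp(j) \neq \{1,\dotsc,n\}$, so at least one index $a_0$ lies in neither support. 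Since the off-diagonal entries of $s$ are completely determined by these choices (via the formula in the definition of $\fD$), the space of such representatives with fixed support data has dimension exactly $2n$, and there are only finitely many support choices. The key point is that the free index $a_0$ forces the $P$-stabilizer of the triple $(r,i,j)$ to be strictly larger than in Lemma~\ref{lemma:open_irreducible_components}: a diagonal matrix $b$ centralizing $r$ (the centralizer of a regular diagonal $r$ is the diagonal torus) fixes $i$ and $j$ iff it is $1$ on $\supp(i) \cup \supp(j)$, leaving at least a one-dimensional subtorus free. Hence the orbit $\orbit{(r,s,i,j)}$ has dimension at most $\dim P - 1 = d_{\fp} - 1$, so each piece has dimension at most $(d_{\fp} - 1) + 2n < d_{\fp} + 2n$.

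First I would set up the decomposition: write $\fM''_{\bfa} = \bigcup_{(\bfb, \bfb')} \fM''_{\bfa, \bfb, \bfb'}$, where the union is over the finitely many pairs with $\bfb_{\downarrow} \subseteq \bfa_{\downarrow}$, $\bfb'_{\uparrow} \cap \bfa_{\downarrow} = \emptyset$, and $\bfb_{\downarrow} \cup \bfb'_{\uparrow} \neq \{1,\dotsc,n\}$, and $\fM''_{\bfa,\bfb,\bfb'}$ is the union of $P$-orbits of elements of $\fD$ with $\supp(i) = \bfb_{\downarrow}$, $\supp(j) = \bfb'_{\uparrow}$. Mirroring the proof of Lemma~\ref{lemma:open_irreducible_components}, I would identify $\fM''_{\bfa,\bfb,\bfb'}$ with $P \times_{H} Z$, where $Z \cong (\CC^n \setminus \Delta) \times \CC^n$ is the space of representatives (eigenvalues and the $\sigma_p$) and $H \subseteq P$ is the subgroup preserving the normal form, which now contains not only a finite symmetric-group factor but also the nontrivial subtorus of the diagonal torus that is free because some coordinate index lies outside $\supp(i) \cup \supp(j)$. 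Then $\dim \fM''_{\bfa,\bfb,\bfb'} = \dim P - \dim H + \dim Z \leq d_{\fp} - 1 + 2n$. Taking the maximum over the finitely many $(\bfb,\bfb')$ gives $\dim \fM''_{\bfa} \leq d_{\fp} + 2n - 1 < d_{\fp} + 2n$.

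The main obstacle, and the step requiring care, is verifying that the stabilizer $H$ genuinely picks up an extra positive-dimensional factor — in other words, that the presence of the unused index $a_0 \notin \supp(i) \cup \supp(j)$ really is reflected in the isotropy group and not "absorbed" by some other feature of the normal form. For this I would argue exactly as in Lemma~\ref{lemma:open_irreducible_components}: $b \in P$ fixing $(r,i,j)$ with $r$ regular diagonal must centralize $r$, hence be diagonal; then $bi = i$ pins down the coordinates of $b$ indexed by $\supp(i)$, and $jb^{-1} = j$ pins down those indexed by $\supp(j)$, but the coordinate indexed by $a_0$ is unconstrained, giving a free $\CC^*$ (and similarly a free subalgebra of $\fp$ for the infinitesimal action). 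One should also double-check that $\fM''_{\bfa}$ as written is indeed covered by such normal-form orbits — this is immediate since $\fM''_{\bfa}$ is by definition the union of $P$-orbits of points of $\fD$, and every point of $\fD$ is already in the required form. Finally, one notes $\dim \fD < \infty$-many components is automatic, and equidimensionality of each $Z$ follows since $\CC^n \setminus \Delta$ is irreducible of dimension $n$; so no delicate estimate beyond the stabilizer computation is needed.
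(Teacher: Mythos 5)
Your proposal is correct and follows essentially the same route as the paper: exhibit a positive-dimensional subgroup of the diagonal torus, supported on the indices outside $\supp(i) \cup \supp(j)$, that stabilizes the normal-form representative, and conclude that each piece has dimension at most $d_{\fp} - 1 + 2n$. The only point to make explicit is that this free subtorus also fixes $s$ (immediate, since the nonzero off-diagonal entries of $s$ occur only at positions indexed by $\supp(i) \times \supp(j)$), so the stabilizer of the full quadruple --- not just of $(r,i,j)$ --- is positive-dimensional, exactly as in the paper's construction of the subgroup $B''$.
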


\begin{proof}
Let $(r,s,i,j) \in \fM''_{\bfa}$ be a representative from Lemma~\ref{lemma:rep_distinct_eigenvalues} with $\supp(i) \subseteq \bfa_{\downarrow}$. Let $\Sigma = \supp(i) \cup \supp(j)$. Consider the subgroup $B'' \subseteq B$ given by the diagonal matrices $(b_{pp})_{p=1}^n$ with
\[
b_{pp} = \begin{cases}
1 & \text{if } p \in \Sigma, \\
\beta_{a''} & \text{if } a'' \in \{1,\dotsc,n\} \setminus \Sigma,
\end{cases}
\]
where $\beta_{a''}$ are arbitrary complex numbers.
Note that $\{1,\dotsc,n\} \setminus \Sigma \neq \emptyset$ by assumption.
Hence, the subgroup $B''$ is of strictly positive dimension that acts trivially on $(r,s,i,j)$. Hence, we must have $\dim \fM''_{\bfa} <\dim \fM'_{\bfa}= \dim \fM = d_{\fp} + 2n$ (the last equality is by Corollary~\ref{cor:complete_intersection}).
\end{proof}

\begin{proof}[Proof of Theorem~\ref{thm:irred-comp-itemized}]
Since the diagonal entries under the $P$-action cannot change, it is straightforward to see that $\fM$ can be written as   
\begin{equation}
\label{eq:decomposition}
\fM = \left( \bigcup_{\bfa \leq \alpha} \fM'_{\bfa} \right) \sqcup \left( \bigcup_{\bfa \leq \alpha} \fM''_{\bfa} \right) \sqcup \overline{\Lambda}^{-1}(\Delta). 
\end{equation}
Next, note that $\dim \Delta < n$, and thus Corollary~\ref{cor:complete_intersection} implies that $\dim \overline{\Lambda}^{-1}(\Delta) < n + \left( d_{\fp} + n \right) = d_{\fp} + 2n$.
Also from Corollary~\ref{cor:complete_intersection}, we have that $\fM$ is a complete intersection, and so every irreducible component must have dimension $d_{\fp} + 2n$. Hence the closures of $\fM''_{\bfa}$ (from Lemma~\ref{lemma:too_small_part}) and $\pi^{-1}(\Delta)$ cannot be irreducible components. The claim that the closure of $\fM'_{\bfa}$ is an irreducible component follows from Lemma~\ref{lemma:open_irreducible_components}, and thus we have obtained all irreducible components from the decomposition~\eqref{eq:decomposition}.

Since there are no fixed points under the $P$-action on $\fM'_{\bfa}$, 
the map $\mu$ is a submersion at generic points of the scheme $\fM$. 
Thus $\fM$ is generically reduced. We have that $\fM$ is Cohen-Macaulay since it is a complete intersection. We conclude that $\fM$ is reduced (see, \textit{e.g.},~\cite[Ex.~18.9]{MR1322960} or~\cite[Thm.~2.2.11]{MR2838836}).
\end{proof}

\section{Proof of Theorem~\ref{theorem:CM-smooth-irreducible-affine-var}}
\label{section:connections-to-CM-particles}

In this section, we will work with $\widetilde{C}_n$, proving Theorem~\ref{theorem:CM-smooth-irreducible-affine-var}.  Our proofs closely follow those of the analogous statements from~\cite{Wilson98}. We first prove a lemma analogous to \cite[Cor.~1.4]{Wilson98}.

\begin{lemma}\label{lemma:commute-scalar}
Fix $(r,s,i,j)\in \widetilde{C}_n$. Suppose $b \in P$ is a matrix that commutes with both $r$ and $s$. Then $b = \lambda \I_n$ for some $\lambda \in \CC$ (\textit{i.e.}, $b$ is a scalar matrix). 
\end{lemma}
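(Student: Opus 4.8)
The plan is to mimic the argument of Wilson for Calogero--Moser space, adapted to the parabolic setting using the rank-one relation $\overline{[r,s]+\I_n}=-\overline{ij}$ defining $\widetilde{C}_n$. First I would exploit that $b$ commutes with $r$ and $s$ in $\fp$, so that $b$ preserves the generalized eigenspace decomposition of $r$ on $\CC^n$; restricting attention to a single generalized eigenspace $V$ of $r$ (which sits inside one diagonal block, or spans several, but in any case $b$ stabilizes it), I reduce to showing $b|_V$ is scalar. On $V$ the operator $r$ is $\rho\,\I + N$ with $N$ nilpotent, and the relation becomes $\overline{[N,s]+\I_V} = -\overline{(ij)|_V}$ modulo $\fu$; in particular, working modulo the nilradical, the commutator $[r,s]$ contributes only off-diagonal-block terms, so the diagonal-block part of $ij$ has trace equal to $\dim V$ on that block and rank at most one there, forcing $i$ and $j$ to be nonzero (a cyclic-type condition) on each block.

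Next I would run the standard commutant argument: since $[b,r]=[b,s]=0$, for any word $w(r,s)$ in $r,s$ and the rank-one datum, $b$ commutes with $w(r,s)\,ij$ and with $ij\,w(r,s)$; using that $i$ is (block-wise) a cyclic vector for the pair $(r,s)$ acting modulo $\fu$ — which follows from the rank-one / complete-intersection structure established via Lemma~\ref{lemma:rep_distinct_eigenvalues} and the Jordan $P$-semicanonical form of Theorem~\ref{thm:Jordan_semicanonical_form} — one shows $b\,i$ is proportional to $i$, say $b\,i = \lambda i$ on each block, and then $b\,w(r,s)\,i = w(r,s)\,b\,i = \lambda\,w(r,s)\,i$, so $b=\lambda\I$ on the span of the $w(r,s)\,i$, which is all of $V$. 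A symmetric argument with $j$ pins down the same scalar $\lambda$ across blocks that are linked by nonzero off-diagonal entries of $s$ (or of $ij$), and the connectedness coming from the support conditions $\supp(i)=\bfa_\downarrow$, $\supp(j)=(\alpha-\bfa)_\uparrow$ in Theorem~\ref{thm:irred-comp-itemized} forces a single global $\lambda$, giving $b=\lambda\I_n$.

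The main obstacle I anticipate is that, unlike in Wilson's $G$-case, everything here lives modulo the nilradical $\fu$, so the rank-one relation only controls $[r,s]+ij$ up to strictly-block-upper-triangular terms; consequently the usual "cyclicity of $i$" is not immediate and must be extracted block-by-block, and one must be careful that the freedom in the off-diagonal-block entries of $s$ (and in the representative chosen via Theorem~\ref{thm:Jordan_semicanonical_form}) does not destroy the commutant argument. I would handle this by first reducing to a normal form for $(r,s,i,j)$ on each diagonal block using Lemma~\ref{lemma:rep_distinct_eigenvalues} (in the diagonalizable case) or the Jordan $P$-semicanonical form in general, and then checking the commutant identity block-by-block, finally gluing along the blocks where $s$ or $ij$ has a nonzero off-diagonal entry. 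The other delicate point is ruling out a nontrivial unipotent $b$: here the argument is that a unipotent $b$ commuting with a diagonalizable (or block-diagonalized) $r$ and fixing $i$ with the prescribed support must be trivial, exactly as in the isotropy computations of Lemma~\ref{lemma:open_irreducible_components}.
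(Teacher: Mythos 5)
There is a genuine gap, and it sits at the heart of your argument: the ``commutant argument'' runs on the claim that $i$ is a (block-wise) cyclic vector for the pair $(r,s)$, and you justify that claim by citing Lemma~\ref{lemma:rep_distinct_eigenvalues}, Theorem~\ref{thm:Jordan_semicanonical_form}, and the complete-intersection structure. None of these gives cyclicity. Lemma~\ref{lemma:rep_distinct_eigenvalues} and the support conditions $\supp(i)=\bfa_{\downarrow}$, $\supp(j)=(\alpha-\bfa)_{\uparrow}$ of Theorem~\ref{thm:irred-comp-itemized} concern the zero fiber $\fM=\mu^{-1}(\overline{0})$, where $i$ generically has proper support (and may even vanish), so cyclicity is false there; they say nothing about $\widetilde{C}_n=\mu^{-1}(-\I_n)$, and importing them is also what makes your final ``gluing'' step unsound. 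The input you actually need is that $r$ and a representative $\widetilde{s}\in\fg$ of $s$ have no common invariant subspace other than $0$ and $\CC^n$, and this comes from a trace computation with the rank-one identity $[r,\widetilde{s}]+\I_n=-ij$: on a common invariant subspace $V$ the left-hand side has trace $\dim V$ (the commutator of the restrictions is traceless), while the right-hand side forces either $j|_V=0$, whence $\dim V=0$, or $i\in V$, whence passing to $\CC^n/V$ gives $n-\dim V=0$. Your proposal never supplies this trace input, so the cyclicity on which everything else rests is unproved.

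This is also where your route diverges from the paper's, which is much shorter: choose a representative $\widetilde{s}\in\fg$ so that $(r,\widetilde{s},i,j)$ satisfies Wilson's equation, note that every eigenspace of $b$ is a common invariant subspace of $r$ and $\widetilde{s}$, and conclude from Wilson's Lemma~1.3 (the trace argument above) that the nonempty eigenspace is all of $\CC^n$. In particular, there is no need to decompose $\CC^n$ into generalized eigenspaces of $r$, prove block-wise scalarity, and then glue: a single eigenspace of $b$ already does the job. If you want to keep a cyclic-vector formulation, you must first prove $i\neq 0$ (e.g.\ from $\tr(ij)=-\tr([r,\widetilde{s}]+\I_n)=-n$) and then derive cyclicity from the no-common-invariant-subspace property --- not the other way around, and not from results about $\fM$.
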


\begin{proof}
Consider some representative $\widetilde{s} \in \fg$ of $s$.
Note that $(r,\widetilde{s},i,j)$ is an element of the variety defined by~\cite[Eq.~(1.1)]{Wilson98}.
For any matrix $b$ that commutes with both $r$ and $\widetilde{s}$, any eigenspace of $b$ is a common invariant subspace for $r$ and $\widetilde{s}$. Therefore any eigenspace of $b$ must be $\CC^n$ by~\cite[Lemma~1.3]{Wilson98}. Since this holds for every representative of $s$, we must have $b = \lambda \I$ for some $\lambda \in \CC$ as desired.
\end{proof}

The following corollary is a generalization of \cite[Cor. 1.5]{Wilson98}. 
\begin{corollary}\label{cor:free-action}
The group $P$ acts freely on $\widetilde{C}_n$. 
\end{corollary}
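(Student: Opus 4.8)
The plan is to deduce Corollary~\ref{cor:free-action} directly from Lemma~\ref{lemma:commute-scalar} together with Lemma~\ref{lemma:commute-scalar}'s ambient setup. First I would recall that the $P$-action on $\widetilde{C}_n$ is the one given by~\eqref{eq:B_action}, namely $b.(r,s,i,j) = (brb^{-1}, \overline{bsb^{-1}}, bi, jb^{-1})$, so a point $(r,s,i,j)$ is fixed by $b \in P$ precisely when $brb^{-1} = r$, $\overline{bsb^{-1}} = s$ (equivalently $bsb^{-1} - s \in \fu$ for every representative), $bi = i$, and $jb^{-1} = j$. The goal is to show that any such $b$ must be $\I_n$.

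The key step is to observe that the conditions $brb^{-1} = r$ and $\overline{bsb^{-1}} = s$ alone do not immediately say $b$ commutes with a genuine $\fg$-representative of $s$, so I would first argue that $b$ does in fact commute with some representative $\widetilde{s} \in \fg$ of $s$. One clean way: lift to the $\mathfrak{g}$-setting as in Lemma~\ref{lemma:commute-scalar} — since $(r,s,i,j) \in \widetilde{C}_n = \mu^{-1}(-\I_n)$ and $b$ stabilizes the point, the element $b \widetilde{s} b^{-1}$ is another representative of $s$, and one checks that the rank-one condition $\rk([r,s]+\I_n) = 1$ forces the relevant commutator identities to hold on the nose; alternatively, and more simply, I can invoke that for the Wilson-type variety the pair $(r,\widetilde{s})$ is irreducible in the sense of~\cite[Lemma~1.3]{Wilson98}, so no proper nonzero subspace of $\CC^n$ is simultaneously invariant under $r$ and $\widetilde s$. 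In either case, Lemma~\ref{lemma:commute-scalar} then applies and gives $b = \lambda \I_n$ for some $\lambda \in \CC^*$ (nonzero since $b \in P \subseteq \GL_n$). Finally, the condition $bi = i$ becomes $\lambda i = i$; since $i \neq 0$ (indeed $ij$ has rank one, forced by $\tr([r,s]+\I_n) = n \neq 0$, so $i \neq 0$), we conclude $\lambda = 1$, hence $b = \I_n$. Thus the stabilizer of every point is trivial and $P$ acts freely.

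The main obstacle I anticipate is the bookkeeping needed to pass from "$b$ fixes $(r,s,i,j)$ for the parabolic moment map, where $s$ lives in $\fp^* = \fg/\fu$" to "$b$ commutes with an honest matrix $\widetilde{s}$", since conjugation-invariance of a class in $\fg/\fu$ is a priori weaker than conjugation-invariance of a representative. The cleanest resolution is probably to note that $b \in P$ implies $b\fu b^{-1} = \fu$, so $b$ acts on $\fg/\fu$ and the condition $\overline{b\widetilde s b^{-1}} = \overline{\widetilde s}$ says $b\widetilde s b^{-1} - \widetilde s \in \fu$; combined with $b r b^{-1} = r$ and the rank-one constraint defining $\widetilde C_n$, one shows $[r, b\widetilde s b^{-1} - \widetilde s]$ is simultaneously constrained enough that $b\widetilde s b^{-1} = \widetilde s$ exactly — but in practice it is smoother to just cite Lemma~\ref{lemma:commute-scalar} as a black box (it already handles "for every representative of $s$") and only do the short argument that $\lambda i = i$ forces $\lambda = 1$. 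So the corollary should follow in a few lines once Lemma~\ref{lemma:commute-scalar} is in hand.
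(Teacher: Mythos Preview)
Your proposal is correct and takes exactly the paper's approach: the paper's proof is three lines that extract $br=rb$ and $bs=sb$ from the fixed-point condition, invoke Lemma~\ref{lemma:commute-scalar} to get $b=\lambda\I_n$, and conclude $\lambda=1$ from $bi=i$. The subtlety you flag about what ``$b$ commutes with $s\in\fg/\fu$'' means is real but is absorbed into the hypothesis of Lemma~\ref{lemma:commute-scalar} itself (just as you ultimately suggest), and your extra justification that $i\neq 0$ via $\tr([r,s]+\I_n)=n$ is a useful detail the paper leaves implicit.
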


\begin{proof} 
If $(brb^{-1},bsb^{-1},bi, jb^{-1})= (r,s,i,j)$, for some $b \in P$, we have $br = rb$ and $bs = sb$. Therefore, $b$ commutes with $r$ and $s$, and so $b = \lambda \I_n$ by Lemma~\ref{lemma:commute-scalar}. Since we have $bi = \lambda i = i$, we have $\lambda = 1$ and $b$ the identity matrix.
\end{proof} 

We follow \cite[Prop. 1.7]{Wilson98} to obtain: 
\begin{proposition}\label{prop:differential-surjective}
The differential of $\mu$ is surjective at every point of $\widetilde{C}_n$. 
\end{proposition}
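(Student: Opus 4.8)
The strategy is the standard one for showing a moment map has surjective differential on a level set: compute the image of the differential $d\mu$ at a point $(r,s,i,j) \in \widetilde{C}_n$, identify its annihilator inside $(\fp^*)^* \cong \fp$ (using the trace pairing), and show that this annihilator is forced to be zero. Concretely, for the Hamiltonian $P$-action~\eqref{eq:B_action} with moment map $\mu(r,s,i,j) = \overline{[r,s]+ij}$, a direct computation of the differential shows that a functional in $\fp \cong (\fp^*)^*$ (via $q \mapsto \tr(q\,\cdot\,)$) annihilates $\im(d\mu_{(r,s,i,j)})$ precisely when $q$ lies in the stabilizer of $(r,s,i,j)$ for the $\fp$-action, i.e. when $q\bullet(r,s,i,j) = ([q,r],[s,q],qi,-jq) = 0$. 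This is the infinitesimal version of the general principle that $\coker(d\mu)$ at a point is dual to the Lie algebra of the stabilizer of that point.

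**Key steps.** First I would recall the identification $T^*\fP \cong \fp \times \fg/\fu \times \CC^n \times (\CC^n)^*$ and write down $d\mu$ at $(r,s,i,j)$ explicitly on a tangent vector $(\delta r, \delta s, \delta i, \delta j)$, obtaining something of the form $\overline{[\delta r, s] + [r, \delta s] + (\delta i)j + i(\delta j)}$. Second, pair this against an arbitrary $q \in \fp$ using the nondegenerate pairing $\fp \times \fg/\fu \to \CC$, $(q, \bar x) \mapsto \tr(qx)$; using cyclicity of the trace and the fact that $\tr(q \cdot \fu$-part$) $ is handled by the quotient, one rearranges $\tr(q(d\mu(\cdots)))$ into a sum of terms each linear in one of $\delta r, \delta s, \delta i, \delta j$ whose coefficients are the four components $[q,r]$ (paired into the $\fp^*$-slot), $[s,q]$, $qi$, $-jq$ — i.e. exactly the components of $q \bullet (r,s,i,j)$. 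Hence $q$ annihilates $\im(d\mu)$ iff $q \bullet (r,s,i,j) = 0$. Third, $q \bullet(r,s,i,j) = 0$ means in particular $[q,r] = 0$ and $[s,q] = 0$, so $q$ commutes with both $r$ and $s$; Lemma~\ref{lemma:commute-scalar} (the Lie-algebra analogue, which follows the same eigenspace argument) forces $q = \lambda \I_n$, and then $qi = \lambda i = 0$ together with $i \neq 0$ (which holds on $\widetilde{C}_n$, since $ij$ has rank $1$) forces $\lambda = 0$, so $q = 0$. Therefore the annihilator of $\im(d\mu_{(r,s,i,j)})$ in $\fp$ is trivial, and since the pairing $\fp \times \fp^* \to \CC$ is nondegenerate, $d\mu$ is surjective at $(r,s,i,j)$.

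**Main obstacle.** The only genuinely delicate point is making the pairing bookkeeping airtight: $\mu$ lands in $\fp^* \cong \fg/\fu$, so when I pair $d\mu(\cdots)$ against $q \in \fp$ I must be careful that the "$\delta r$" contribution really produces the $\fp$-component of $[q,r]$ as the obstruction and that nothing is lost modulo $\fu$ — this is exactly where $q \in \fp$ (rather than $q \in \fg$) and $[q,r] \in \fp$ are used, together with the fact that $[q,r]$ lands in $\fu$ automatically would be the degenerate case. A secondary wrinkle is noting that $i \neq 0$ on $\widetilde{C}_n$: this is immediate because $\rk([r,s]+\I_n) = \rk(-ij)$ and $\tr([r,s]+\I_n) = n > 0$, so $ij \neq 0$, hence $i \neq 0$. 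With these two observations in place the argument is a routine transcription of~\cite[Prop.~1.7]{Wilson98}, with Lemma~\ref{lemma:commute-scalar} substituting for Wilson's corresponding statement.
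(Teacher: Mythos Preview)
Your proposal is correct and follows essentially the same approach as the paper: compute $d\mu$, identify the annihilator of its image under the trace pairing as those $q\in\fp$ commuting with $r$ and $s$ and killing $i$ and $j$, invoke Lemma~\ref{lemma:commute-scalar} to force $q$ scalar, and then use $qi=0$ with $i\neq 0$ to get $q=0$. Your discussion of the $\fp$ versus $\fg/\fu$ bookkeeping and the explicit justification that $i\neq 0$ on $\widetilde{C}_n$ are more careful than the paper's presentation, but the argument is the same.
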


\begin{proof}
The differential of $\mu$ at the point $(r,s,i,j)\in T^*\fP$ is 
\[ 
d\mu(X,Y,a,b) = [r, X] + [Y, s] + ib - aj. 
\]  
The annihilator of the image of this map with respect to the nondegenerate bilinear form $(r,s) \mapsto \tr(rs)$ consists of all matrices $R$ such that 
\[ 
[R,r] = [R,s] = Ri = jR = 0. 
\] 
Thus $d\mu$ is surjective at $(r,s,i,j)$ if and only if $R=0$ is the only solution to these equations. Yet if $(r,s,i,j) \in \widetilde{C}_n$, then by Lemma~\ref{lemma:commute-scalar}, the first two equations imply that $R$ is a scalar. The last two equations show that $R=0$. 
\end{proof}

Since $\widetilde{C}_n = \mu^{-1}(-\I_n)$ for the point $-\I_n \in \fp^*$, it follows from Proposition~\ref{prop:differential-surjective} and the implicit function theorem that $\widetilde{C}_n$ is a smooth subvariety of $T^*\fP$ with every component of dimension $d_{\fp} + 2n$. 
It follows from Corollary~\ref{cor:free-action} that the quotient space $C_n$ is a smooth affine variety of dimension $2n$. 

For the remainder of this section, we aim to prove that $C_n$ is irreducible.
Since $C_n$ is smooth, it is equivalent to proving that it is connected (here, we may work with either the classical complex or Zariski topology).

The following is the analog of~\cite[Prop.~1.10]{Wilson98}.

\begin{lemma}\label{lemma:r-diagonalizable}
Let $(\check{r}, \check{s},\check{\imath}, \check{\jmath}) \in \widetilde{C}_n$ such that $\check{r}$ is diagonalizable. Then the eigenvalues of $\check{r}$ are distinct, and the $P$-orbit of $(\check{r},\check{s},\check{\imath},\check{\jmath})$ contains an element $(r,s,i,j)$ such that 
\begin{enumerate}
\item\label{item:r-diag} $r = \diag(-\rho_1, -\rho_2, \dotsc, -\rho_n)$ is diagonal, 
\item\label{item:ij-equal-1} all of the entries in the vectors $i$ (resp.~$j$) are equal to $-1$ (resp.~$1$), 
\item\label{item:s-CM-matrix} $s$ is a \defn{block lower Calogero--Moser matrix}: it has entries
\[ 
s_{pq} = \begin{cases}
\sigma_p \quad & \text{if } p = q, \\
(\rho_p - \rho_q)^{-1} & \text{if } p > q \text{ or if $p < q$ with $(p,q)$ an entry in a diagonal block}, \\
0 & \text{otherwise},
\end{cases}
\] 
for some $\sigma_p \in \CC^n$.
\end{enumerate}
Moreover, the element $(r,s,i,j)$ is unique up to (simultaneous) permutation of $(\rho_i, \sigma_i)_{i=1}^n$ by $S_{\alpha}$.
\end{lemma}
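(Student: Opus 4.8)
\textbf{Proof plan for Lemma~\ref{lemma:r-diagonalizable}.}
The plan is to mirror the proof of \cite[Prop.~1.10]{Wilson98} while accounting for the parabolic constraint that conjugation is only by $P$, not by all of $G$. First I would establish that the eigenvalues of $\check{r}$ are pairwise distinct: suppose $\rho$ is an eigenvalue with eigenspace $E$ of dimension $\geq 2$; the rank-one condition $\overline{[\check r,\check s]+\I_n}=-\overline{\check\imath\check\jmath}$ means $[\check r,\check s]+\I_n$ has rank at most one modulo $\fu$, but I would lift to a representative $\widetilde s\in\fg$ and invoke (as in the proof of Lemma~\ref{lemma:commute-scalar}) that $(\check r,\widetilde s,\check\imath,\check\jmath)$ lies in the Wilson variety defined by \cite[Eq.~(1.1)]{Wilson98}, so \cite[Lemma~1.3]{Wilson98} and the argument of \cite[Prop.~1.10]{Wilson98} force $E=\CC^n$, contradicting $n\geq 2$ unless all eigenvalues already differ (the $n=1$ case is trivial). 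Once the eigenvalues are distinct, $\check r$ is regular semisimple, so after acting by the Levi $L\subseteq P$ to diagonalize each diagonal block and then observing that within each $\GL_{\alpha_k}$-block we may further reorder, I can bring $\check r$ to diagonal form $\diag(-\rho_1,\dotsc,-\rho_n)$; this uses Theorem~\ref{thm:Jordan_semicanonical_form} exactly as in Lemma~\ref{lemma:rep_distinct_eigenvalues}, giving~\eqref{item:r-diag}.

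Next, with $r$ diagonal and all $\rho_p$ distinct, the $P$-stabilizer of $r$ is the group of diagonal matrices in $P$ (i.e.\ the full diagonal torus, since $r$ is regular), as one checks from $br=rb$. I would use this remaining torus to normalize $i$: since $\rk([r,s]+\I_n)=1$ forces every entry of $i$ to be nonzero (an argument parallel to the one showing $\supp$ disjointness in Lemma~\ref{lemma:rep_distinct_eigenvalues}, now with the identity on the diagonal rather than $0$), I can scale each coordinate of $i$ to $-1$; the torus is then exhausted, and the compatible scaling $j\mapsto jb^{-1}$ together with the rank-one equation pins down all entries of $j$ to $1$. This gives~\eqref{item:ij-equal-1}. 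Then I solve the moment-map equation $\overline{[r,s]+\I_n}=-\overline{ij}$ entry-by-entry for the off-diagonal entries of $s$: the $(p,q)$ equation with $p\neq q$ reads $(\rho_q-\rho_p)s_{pq}+\delta_{pq}=-(ij)_{pq}$, and since $i_pj_q=-1$ for all $p,q$, for $p\neq q$ lying in the same diagonal block (where we see the full entry of $\fg$, not just $\fg/\fu$) we get $s_{pq}=(\rho_p-\rho_q)^{-1}$, for $p>q$ in different blocks (a genuine entry of $\fp$) we likewise get $s_{pq}=(\rho_p-\rho_q)^{-1}$, and for $p<q$ in different blocks the entry is strictly-block-upper, hence killed in $\fg/\fu$ and taken to be $0$; the diagonal entries $s_{pp}=\sigma_p$ are unconstrained. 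This is precisely~\eqref{item:s-CM-matrix}.

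Finally, for uniqueness: any two representatives of the above normal form differ by an element of the stabilizer of $(r,i,j)$ inside $P$ modulo permutations preserving the block structure; since a diagonal matrix fixing $i$ (all entries $-1$) is the identity, the only freedom is permuting the $\rho_p$ within each block, i.e.\ the $S_{\alpha}$-action, which acts simultaneously on the $\sigma_p$ because it conjugates $s$. I expect the main obstacle to be the very first step --- ruling out repeated eigenvalues --- because the parabolic moment-map equation lives in $\fg/\fu$ rather than $\fg$, so the rank-one condition must be carefully lifted to an honest rank-one condition in $\fg$ via a choice of representative $\widetilde s$ before Wilson's irreducibility-of-common-invariant-subspace argument (\cite[Lemma~1.3]{Wilson98}) can be applied; making sure this lift is legitimate for \emph{every} representative (so that the conclusion $b=\lambda\I$ in Lemma~\ref{lemma:commute-scalar} genuinely transfers) is the delicate point, though it is handled by the same device already used in the proof of Lemma~\ref{lemma:commute-scalar}.
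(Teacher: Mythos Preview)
Your plan for everything after distinctness---diagonalizing via Theorem~\ref{thm:Jordan_semicanonical_form}, normalizing $i$ and $j$ with the residual torus, solving for $s$, and uniqueness up to $S_\alpha$---matches the paper essentially verbatim. The gap is in how you establish that the eigenvalues of $\check r$ are distinct. To invoke \cite[Prop.~1.10]{Wilson98} you need a lift $\widetilde s \in \fg$ satisfying the \emph{exact} equation $[\check r,\widetilde s]+\I_n = -\check\imath\check\jmath$ in $\fg$, not merely modulo $\fu$. Correcting an arbitrary representative by some $u\in\fu$ amounts to solving $[\check r, u] = w$ for a prescribed $w\in\fu$; but when $\check r$ has the same eigenvalue appearing in two distinct Levi blocks, the map $\ad_{\check r}\vert_\fu$ fails to be surjective onto $\fu$, so no such lift need exist---precisely in the case you are trying to exclude. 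The appeal to the device of Lemma~\ref{lemma:commute-scalar} does not close this, since that proof asserts the same lift without justifying it, and in any event the paper's proof of the present lemma does not use that device.

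The paper avoids the problem by reversing the order of your first two steps. Since $\check r$ is diagonalizable, its Jordan $P$-semicanonical form is already diagonal, so one may pass to $r=\diag(-\rho_1,\dotsc,-\rho_n)$ \emph{before} knowing the $\rho_p$ are distinct. The diagonal of $\overline{[r,s]+\I_n}=-\overline{ij}$ then reads $-i_pj_p=1$ for every $p$, so no entry of $i$ or $j$ vanishes; for any $p\neq q$ at least one of the positions $(p,q)$, $(q,p)$ is visible in $\fg/\fu$, and there the equation gives $(\rho_p-\rho_q)s_{pq}$ equal to a nonzero scalar, forcing $\rho_p\neq\rho_q$. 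This elementary entrywise argument needs no lift to Wilson's $\fg$-variety at all.
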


\begin{proof} 
From the Jordan $P$-semicanonical form (Theorem~\ref{thm:Jordan_semicanonical_form}), we can consider $r$ to be a diagonal matrix with diagonal $(-\rho_1, -\rho_2, \dotsc, -\rho_n)$. Note that the diagonal of $[r,s]$ is $(0, \dotsc, 0)$, and so if we consider the diagonal entries $[r,s]+\I_n = -ij$ in~\eqref{eqn:Wilson}, 
we obtain $-i_p j_p = 1$ for all $1 \leq p \leq n$. Thus, $-ij$ is the matrix with every entry being $1$. For all $p > q$ and $p < q$ with $(p,q)$ being an entry in some diagonal block, the $(p,q)$-entry of $[r,s]+\I_n$ is $s_{pq} (\rho_p - \rho_q)$.\footnote{Recall that $s \in \fb^*$ is a lower triangular matrix.} 
Therefore, if a diagonal entry of $r$ repeats, then for some $p$ and $q$, 
we have $s_{pq} (\rho_p - \rho_q)=0\not=1$, which is a contradiction.  
Thus, we have $\rho_p \neq \rho_q$ for $p \neq q$.
Note that the centralizer (equivalently stabilizer) of $r$ is given by diagonal matrices and elements of $S_{\alpha}$, so we can act by diagonal matrices and elements of $S_{\alpha}$ without changing $r$. Acting by the diagonal matrices, we can obtain $i = (-1, -1, \dotsc, -1)$, and so subsequently we have $j = (1, 1, \dotsc, 1)$. However, by fixing $i$ (and $j$ as in~(\ref{item:ij-equal-1})) we only have the action of $S_{\alpha}$ remaining.
Since $1 = s_{pq} (\rho_p - \rho_q)$ for all $p > q$ and $p < q$ with $(p, q)$ in a diagonal block and $s$ is block lower triangular, we obtain that $s$ is a block lower Calogero--Moser matrix.
\end{proof} 

We remark that since the eigenvalues of $r$ are pairwise distinct if $r$ is diagonalizable, not every matrix $r$ occurs as a part of the quadruple $(r,s,i,j)$ in $\widetilde{C}_n$. 
Furthermore, for any $\rho \in \CC^{(\alpha)}$, there exists a unique element $(r,s,i,j)$, up to the $S_{\alpha}$-action, in the $P$-orbit of $(\check{r},\check{s},\check{\imath},\check{\jmath})$ such that $\spec(r) = \rho$ and that satisfies~(\ref{item:ij-equal-1}) and~(\ref{item:s-CM-matrix}) from Lemma~\ref{lemma:r-diagonalizable}.
Therefore, note that in Lemma~\ref{lemma:r-diagonalizable}, we cannot permute the $\rho_i$ using the parabolic subgroup $P$ other than within a particular block. So the set of points $\bigl( (\rho_1, \dotsc, \rho_n), (\sigma_1, \dotsc, \sigma_n) \bigr) \in (\CC^{(\alpha)} \setminus \Delta) \times \CC^n$ is fixed under the $P$-action. 

Let $\pi \colon C_n \to \CC^{(\alpha)}$ be defined by $(r,s,i,j) \mapsto \Spec(r)$. It is clear that $\pi$ is surjective.
A consequence of Lemma~\ref{lemma:r-diagonalizable} is that
\[
C_n^d := \{(r,s,i,j)\in C_n \mid r \text{ is diagonalizable} \} = \pi^{-1}(\CC^{(\alpha)} \setminus \Delta). 
\]
Therefore the parameters $(\rho, \sigma)$ define an isomorphism $C_n^d \iso (\CC^{(\alpha)} \setminus \Delta) \times \CC^n$, with $\pi$ corresponding to the projection onto the first factor. Thus we have the following analog of \cite[Lemma 1.9]{Wilson98}.

\begin{corollary}
\label{cor:pairwise-distinct-eigenvalues-connected}
We have that $\pi^{-1}(\CC^{(\alpha)} \setminus \Delta)$ is connected. 
\end{corollary}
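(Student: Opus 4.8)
The plan is to identify $\pi^{-1}(\CC^{(\alpha)} \setminus \Delta)$ with an explicit product space and then argue that this product is connected. By Lemma~\ref{lemma:r-diagonalizable} together with the discussion immediately preceding this corollary, every point of $C_n^d = \pi^{-1}(\CC^{(\alpha)} \setminus \Delta)$ has a representative $(r,s,i,j)$ with $r = \diag(-\rho_1,\dotsc,-\rho_n)$, with $i = (-1,\dotsc,-1)$ and $j = (1,\dotsc,1)$, and with $s$ the block lower Calogero--Moser matrix determined by $(\rho_p)$ and free parameters $(\sigma_p)$; moreover this representative is unique up to the residual $S_{\alpha}$-action, which acts simultaneously on the tuples $(\rho_p)_{p=1}^n$ and $(\sigma_p)_{p=1}^n$ within each block. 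Hence the assignment $(r,s,i,j) \mapsto \bigl((\rho_1,\dotsc,\rho_n),(\sigma_1,\dotsc,\sigma_n)\bigr)$ descends to an isomorphism of varieties
\[
C_n^d \;\iso\; \bigl( (\CC^{\alpha_1} \setminus \Delta) \times \CC^{\alpha_1} \bigr) / S_{\alpha_1} \times \dotsb \times \bigl( (\CC^{\alpha_{\ell}} \setminus \Delta) \times \CC^{\alpha_{\ell}} \bigr) / S_{\alpha_{\ell}},
\]
where in the $k$-th factor we have removed the diagonal $\Delta \subseteq \CC^{\alpha_k}$ only in the $\rho$-coordinates and $S_{\alpha_k}$ acts diagonally on the pair of $\alpha_k$-tuples. (The global removal of $\Delta$ from $\CC^{(\alpha)}$ in the statement of the corollary amounts to removing it from each block separately, since eigenvalues in different blocks are already allowed to coincide — but I should double-check that $\Delta \subseteq \CC^{(\alpha)}$ is intended blockwise; if it is the genuine big diagonal of $\CC^n \cong \CC^{(\alpha)}$, then the space is slightly smaller, namely the locus where \emph{all} $n$ eigenvalues are pairwise distinct, and the same connectedness argument still applies.)

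Given this identification, connectedness of $C_n^d$ reduces to connectedness of each factor, and a product of connected spaces is connected, so it suffices to show each $\bigl( (\CC^{\alpha_k} \setminus \Delta) \times \CC^{\alpha_k} \bigr) / S_{\alpha_k}$ is connected. For this, first note that $(\CC^{\alpha_k} \setminus \Delta) \times \CC^{\alpha_k}$ is itself connected: $\CC^{\alpha_k}$ is irreducible hence connected, and $\CC^{\alpha_k} \setminus \Delta$ is the complement of a proper Zariski-closed subset of an irreducible variety, hence again irreducible and connected; a product of irreducible varieties over $\CC$ is irreducible, hence connected. Finally, the continuous surjective image of a connected space is connected, and the quotient map $(\CC^{\alpha_k} \setminus \Delta) \times \CC^{\alpha_k} \twoheadrightarrow \bigl( (\CC^{\alpha_k} \setminus \Delta) \times \CC^{\alpha_k} \bigr) / S_{\alpha_k}$ is surjective and continuous (in either the classical or Zariski topology), so the quotient is connected. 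Assembling the $\ell \leq 5$ factors gives that $C_n^d = \pi^{-1}(\CC^{(\alpha)} \setminus \Delta)$ is connected.

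The only real subtlety — and the step I would write most carefully — is establishing that the map $(r,s,i,j) \mapsto (\rho,\sigma)$ genuinely defines a morphism $C_n^d \to \prod_k \bigl((\CC^{\alpha_k}\setminus\Delta)\times\CC^{\alpha_k}\bigr)/S_{\alpha_k}$ with algebraic (or at least continuous) inverse, rather than merely a set-theoretic bijection on points. The forward direction is fine since $\Spec(r)$ is visibly regular in the matrix entries of $r$ and the $\sigma_p = s_{pp}$ are coordinate functions; the inverse direction requires writing down the Calogero--Moser matrix $s$ as an explicit rational (in fact regular, away from $\Delta$) function of $\rho$ and $\sigma$, checking it lands in $\widetilde{C}_n$, and then verifying $S_{\alpha}$-invariance so that it factors through the quotient — all of which is exactly the content of (the converse part of) Lemma~\ref{lemma:rep_distinct_eigenvalues} and Lemma~\ref{lemma:r-diagonalizable} and the uniqueness statement therein. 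Once the isomorphism is in hand the connectedness is essentially immediate. Note that none of this uses $\ell \leq 5$ beyond what is already inherited through $C_n$ being a variety; the Hille--R\"ohrle bound has done its work earlier.
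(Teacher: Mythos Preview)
Your approach is essentially the paper's: the corollary is stated immediately after the sentence asserting that the parameters $(\rho,\sigma)$ give an isomorphism $C_n^d \iso (\CC^{(\alpha)}\setminus\Delta)\times\CC^n$, and connectedness is meant to be read off from that. The one point to clean up is your hedging about $\Delta$: in this paper $\Delta$ is the full big diagonal of $\CC^n$ (see its definition in Section~\ref{section:proof-irred-comp-itemized}), and Lemma~\ref{lemma:r-diagonalizable} forces \emph{all} $n$ eigenvalues to be pairwise distinct, not merely those within a block. Consequently your displayed block-by-block product is not literally correct, since the cross-block distinctness constraints couple the factors; the right description is $\bigl((\CC^n\setminus\Delta)\times\CC^n\bigr)/S_{\alpha}$ with the diagonal action. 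As you yourself note in the parenthetical, your connectedness argument (complement of a proper closed subset of an irreducible variety, times $\CC^n$, then continuous image under a finite quotient) applies verbatim to this space, so the conclusion stands once the factorization is corrected.
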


Next, we need the following lemma, which is similar to \cite[Lemma 1.8]{Wilson98}. 

\begin{lemma}\label{lemma:fibers-dimension-pi}
Let $\alpha$ have length at most $5$.
The fibers of $\pi$ have dimension at most $n$. 
\end{lemma}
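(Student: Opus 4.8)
The plan is to bound the dimension of each fiber $\pi^{-1}(\rho)$ for $\rho \in \CC^{(\alpha)}$ by analyzing the stratification according to the $P$-conjugacy class of $r$. Fix $\rho = \Spec(r)$. Since $C_n = \widetilde{C}_n / P$ and $P$ acts freely on $\widetilde{C}_n$ by Corollary~\ref{cor:free-action}, it suffices to show that the corresponding locus $\widetilde{C}_n(\rho) := \{(r,s,i,j) \in \widetilde{C}_n \mid \Spec(r) = \rho\}$ has dimension at most $n + \dim P = n + d_{\fp}$, since the quotient by the free $P$-action drops the dimension by $\dim P = d_{\fp}$. So the real content is an upper bound $\dim \widetilde{C}_n(\rho) \leq d_{\fp} + n$.

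First I would stratify $\widetilde{C}_n(\rho)$ by the (finitely many, using Theorem~\ref{thm:Hille-Roehrle-block-vec}) $P$-orbits $\mcO_r \subseteq \fp$ of matrices $r$ with $\Spec(r) = \rho$; this is legitimate exactly because $\ell \leq 5$, which is where the length hypothesis enters. For a fixed orbit $\mcO_r$, the corresponding piece is a bundle over $\mcO_r \cong P/Z_P(r)$ whose fiber over a chosen $r$ is $\{(s,i,j) : \overline{[r,s]+\I_n} = -\overline{ij}\}$. Thus $\dim(\text{piece}) = \dim \mcO_r + \dim\{(s,i,j) : \cdots\}$. To bound the fiber over a fixed $r$: the condition forces $\rk([r,s]+\I_n) = 1$ (as observed in the discussion preceding Theorem~\ref{theorem:CM-smooth-irreducible-affine-var}), so once $s$ is fixed, $(i,j)$ is determined up to the one-dimensional scaling $(i,j) \mapsto (\lambda i, \lambda^{-1} j)$, contributing dimension at most $1$ plus the dimension of the space of valid $s$. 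The space of valid $s \in \fp^*$ is cut out inside $\fp^*$ (of dimension $d_{\fp} - \dim\fu = \dim\fg - \dim\fu$... more precisely $\dim \fp^* = d_{\fp}$) by the rank-$\leq 1$ condition, but more efficiently one uses that $s$ must satisfy $\overline{[r,s]} = -\overline{ij + \I_n}$ which, for the image to be rank $1$ minus identity, heavily constrains $s$; the cleanest route is to mimic Wilson's argument in~\cite[Lemma~1.8]{Wilson98} and the earlier Lagrangian computations: the fiber dimension over $\mcO_r$ is at most $\dim \mcO_r + (\text{codim considerations})$, and summing with the standard identity $\dim \mcO_r + \dim Z_{\fp}(r) = \dim \fp$ gives the bound $d_{\fp} + n$ after the $P$-quotient. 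Alternatively, and perhaps more in the spirit of the paper, one can invoke Corollary~\ref{cor:complete_intersection}: $\fM = \mu^{-1}(\overline 0)$ is equidimensional of dimension $d_{\fp} + 2n$, and the argument for the special fiber adapts to the fiber $\mu^{-1}(-\I_n) = \widetilde{C}_n$ since $\mu$ is flat (hence all fibers have the same dimension $d_{\fp} + 2n$), so $\dim \widetilde{C}_n = d_{\fp} + 2n$; then the further stratification of $\widetilde{C}_n$ by $\Spec(r)$, whose base $\CC^{(\alpha)}$ has dimension $n$, combined with the fact that $\widetilde{C}_n \to \CC^{(\alpha)}$ need only be shown to have equidimensional-enough fibers, yields that a generic fiber has dimension $(d_{\fp}+2n) - n = d_{\fp} + n$, and one must separately check no special fiber jumps above this. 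Passing to the quotient $C_n = \widetilde{C}_n/P$ then gives the fibers of $\pi$ dimension at most $(d_{\fp}+n) - d_{\fp} = n$.

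The key steps, in order: (1) identify $C_n^d = \pi^{-1}(\CC^{(\alpha)}\setminus\Delta) \cong (\CC^{(\alpha)}\setminus\Delta)\times\CC^n$ from Lemma~\ref{lemma:r-diagonalizable}, so the generic fiber has dimension exactly $n$; (2) for $\rho \in \Delta$, stratify $\pi^{-1}(\rho)$ by $P$-orbits of $r$, finitely many by Theorem~\ref{thm:Hille-Roehrle-block-vec}; (3) for each stratum, use freeness of the $P$-action (Corollary~\ref{cor:free-action}) to reduce to counting parameters for $(s,i,j)$ with $r$ fixed in Jordan $P$-semicanonical form; (4) bound the number of free parameters: the diagonal $\sigma_p$ of $s$ (contributing $n$), the rank-one constraint pinning down the off-diagonal part of $s$ and the pair $(i,j)$ up to scalar, with the key estimate being that repeated eigenvalues only \emph{decrease} the available dimension because the equation $s_{pq}(\rho_p - \rho_q) = (\text{nonzero})$ becomes obstructed as in the proof of Lemma~\ref{lemma:r-diagonalizable}; (5) conclude $\dim \pi^{-1}(\rho) \leq n$ for all $\rho$.

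The main obstacle is step (4): making rigorous that the rank-one moment-map condition, together with a non-diagonalizable $r$, never produces \emph{more} than $n$ moduli after quotienting by $P$. The subtlety is that for non-generic $r$ the set of valid $s$ can behave in ways not visible from the diagonalizable case, and one cannot simply write down a Calogero--Moser-type normal form. I expect to handle this either by a careful dimension count orbit-by-orbit using $\dim \mcO_r = \dim\fp - \dim Z_\fp(r)$ and showing the fiber of $(s,i,j)$ over fixed $r$ has dimension at most $\dim Z_\fp(r) + n - (\text{correction})$, or — more cleanly — by deducing the bound from flatness of $\mu$ (so $\dim\widetilde{C}_n = d_\fp + 2n$ exactly, matching Corollary~\ref{cor:complete_intersection}) together with upper semicontinuity of fiber dimension for $\pi$, reducing everything to the already-understood generic value $n$ on the dense open $\CC^{(\alpha)}\setminus\Delta$ and an argument that $\widetilde C_n(\rho)$ for $\rho\in\Delta$ cannot be too large — this last point being exactly where one again invokes that there are only finitely many $P$-orbits, i.e. $\ell\le 5$.
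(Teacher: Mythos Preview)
Your overall architecture---stratify $\pi^{-1}(\rho)$ by the finitely many $P$-conjugacy classes of $r$ with $\Spec(r)=\rho$ (using Theorem~\ref{thm:Hille-Roehrle-block-vec} and the Jordan $P$-semicanonical form), then bound each stratum---matches the paper exactly. The gap is in step~(4), and your two proposed routes to close it both fall short.

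The flatness/semicontinuity route is simply wrong as stated. Flatness of $\mu$ gives $\dim\widetilde{C}_n=d_{\fp}+2n$, and Lemma~\ref{lemma:r-diagonalizable} identifies the generic fiber of $\pi$ as $n$-dimensional. But upper semicontinuity of fiber dimension says that fibers over \emph{special} points can only be \emph{larger}, not smaller; it gives $\dim\pi^{-1}(\rho)\geq n$, which is the opposite inequality. Invoking finiteness of orbits at the end does not rescue this: you still need an independent upper bound on each $C_n(\mcO_r)$, which is exactly the content you are trying to avoid.

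The orbit-by-orbit dimension count is the right idea, but you have not isolated the key estimate. The paper (following Wilson) does \emph{not} fix $s$ first and then count the one-parameter family of $(i,j)$; it goes the other way. First one shows (by an explicit computation with $r_0$ in Jordan $P$-semicanonical form) that the projection $\pr\colon\widetilde{C}_n(r_0)\to\CC^{2n}$, $(r_0,s,i,j)\mapsto(i,j)$, has image contained in an $n$-dimensional subvariety: the single-$P$-block case forces $[r_0,s]+\I_n$ to be upper triangular with a unique nonzero diagonal entry (equal to $n=\tr$), pinning down $(i,j)$ to $n$ parameters, and the multi-block case decouples. Second, for each fixed $(i,j)$ the equation $\overline{[r_0,s]}=-\overline{ij+\I_n}$ determines $s$ up to the centralizer $\Lie(P^{r_0})$, so $\dim\widetilde{C}_n(r_0)\leq n+\dim P^{r_0}$. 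Quotienting by the free $P^{r_0}$-action then yields $\dim C_n(\mcO_r)\leq n$. Your sketch never produces this projection-to-$(i,j)$ bound, and without it the ``dimension of valid $s$'' is not controlled.
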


In order to prove Lemma~\ref{lemma:fibers-dimension-pi}, we first study the restriction to $\widetilde{C}_n$ of the projection $\pr \colon T^*\fP \to \CC^{2n}$ onto the last two factors $(i,j)$; in other words, we have $\pr(r,s,i,j) = (i,j)$. The following generalizes \cite[Lemma 1.11]{Wilson98}. 

\begin{lemma}\label{lem:r0-fixed-n-dim-subvariety}
Fix some $r_0 \in \fb$, and define
\[
\widetilde{C}_n(r_0) = \{ (r,s,i,j) \in \widetilde{C}_n \mid r = r_0\}.
\]
Then $\pr(\widetilde{C}_n(r_0))$ is contained in an $n$-dimensional subvariety of $\CC^{2n}$. 
\end{lemma}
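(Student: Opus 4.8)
The plan is to fix $r = r_0$ and analyze the constraint that $(r_0, s, i, j) \in \widetilde C_n$ imposes on the pair $(i,j)$. Recall that the defining equation is $\overline{[r_0,s] + \I_n} = -\overline{ij}$, i.e.\ $[r_0,s] + ij + \I_n \in \fu$. The key observation is that $[r_0, s]$ always lies in the image of $\ad_{r_0}$, and this image is constrained: modulo $\fu$, the map $s \mapsto \overline{[r_0,s]}$ has an image whose codimension in $\fp^* = \fg/\fu$ is exactly the dimension of the centralizer of $r_0$ in (an appropriate Levi quotient of) $\fp$ — more precisely, the diagonal (Levi) part of $\overline{[r_0,s]}$ is constrained to lie in a subspace of dimension $\dim \fp - \dim \mathfrak z(r_0)$, where $\mathfrak z(r_0)$ is the centralizer. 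So for $(r_0, s, i, j)$ to have a solution $s$, the matrix $ij + \I_n$ must, modulo $\fu$ and after projecting to the relevant pieces, lie in $\im(\ad_{r_0}) + \fu$; this forces $i$ and $j$ to satisfy some linear constraints determined by $r_0$.

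First I would set up coordinates so that $r_0$ is in Jordan $P$-semicanonical form (Theorem~\ref{thm:Jordan_semicanonical_form}), which is harmless since $\widetilde C_n(b r_0 b^{-1})$ is carried isomorphically onto $\widetilde C_n(r_0)$ by the $P$-action $b.(r,s,i,j) = (brb^{-1}, \overline{bsb^{-1}}, bi, jb^{-1})$, and this action commutes with $\pr$ only up to the $(i,j) \mapsto (bi, jb^{-1})$ twist — so I should instead just bound the dimension of $\pr(\widetilde C_n(r_0))$ directly, which is invariant under this twist since $b \in P$ acts on $\CC^{2n}$ with finite stabilizers is not needed, only a bound on dimension. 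Concretely: for fixed $r_0$, consider the linear map $L_{r_0} \colon \fp \to \fg/\fu$, $s \mapsto \overline{[r_0, s]}$, with image $V_{r_0} := \im L_{r_0}$. The condition is that $\overline{ij} + \overline{\I_n} \in -V_{r_0}$, equivalently $\overline{ij} \in W_{r_0} := -V_{r_0} - \overline{\I_n}$, an affine subspace of $\fg/\fu$ of dimension $\dim V_{r_0} = \dim \fp - \dim \ker L_{r_0} = \dim\fp - \dim\mathfrak z_\fp(r_0)$. Then $\pr(\widetilde C_n(r_0))$ is contained in $Z_{r_0} := \{(i,j) \in \CC^n \times (\CC^n)^* : \overline{ij} \in W_{r_0}\}$. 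The heart of the matter is to show $\dim Z_{r_0} \le n$.

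To bound $\dim Z_{r_0}$: the map $(i,j) \mapsto \overline{ij} \in \fg/\fu$ has image consisting of rank-$\le 1$ classes, and I claim that requiring $\overline{ij}$ to lie in a given affine subspace of codimension $c := \dim \mathfrak z_\fp(r_0)$ cuts the $2n$-dimensional space $\{(i,j)\}$ down appropriately. The cleanest route is to stratify by $\supp(j)$ (the row support of $ij$) or, following Wilson's strategy for \cite[Lemma~1.11]{Wilson98}, to observe that after using the remaining freedom one may assume $j$ (equivalently $i$) is determined up to a small-dimensional family by the constraints coming from $\mathfrak z_\fp(r_0)$ — since $\dim \mathfrak z_\fp(r_0) \ge n$ always (the centralizer of any matrix in $\mathfrak{gl}_n$ has dimension $\ge n$, and intersecting with $\fp$ together with the diagonal torus being central still leaves $\ge n$; indeed the diagonal part already contributes $\ge \ell$ and the full centralizer is $\ge n$), one gets that $W_{r_0}$ has codimension $\ge n$ in the space of rank-$\le 1$ matrices parametrized (up to the scaling $\CC^*$) by $(i,j)$. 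The hard part will be making the dimension count on $Z_{r_0}$ precise and uniform in $r_0$ — in particular handling the non-diagonalizable strata where $\mathfrak z_\fp(r_0)$ jumps, and checking that the naive expected dimension $2n - c \le n$ is actually achieved rather than exceeded. Here Theorem~\ref{thm:Hille-Roehrle-block-vec} (via $\ell \le 5$) enters exactly as in the proof of Proposition~\ref{prop:flat_morphism}: it guarantees finitely many $P$-orbits of such $r_0$ with a given spectrum, so it suffices to bound $\dim Z_{r_0}$ for one representative of each orbit, and for the generic (diagonalizable) orbit Lemma~\ref{lemma:r-diagonalizable} already pins $(i,j)$ down to a single $S_\alpha$-orbit, giving $\dim = 0 \le n$ on that stratum, while the lower strata have strictly larger centralizer and hence are handled by the same inequality.
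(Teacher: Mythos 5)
Your setup is the right one --- reduce to a Jordan $P$-semicanonical $r_0$ and observe that the constraint on $(i,j)$ is exactly $\overline{ij}\in -\im(\overline{\ad_{r_0}})-\overline{\I_n}$ --- but the dimension bound you then assert is not proved, and the heuristic you offer for it does not work. Knowing that the affine subspace $W_{r_0}$ has codimension $c=\dim\mathfrak{z}_{\fp}(r_0)\geq n$ in $\fg/\fu$ gives you a \emph{lower} bound $\dim A+\dim B-N$ on the dimension of its intersection with the rank-$\leq 1$ locus, not an upper bound; without a transversality statement the intersection could a priori be as large as the smaller of the two pieces, and proving the needed transversality \emph{is} the lemma. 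You flag this yourself (``the hard part will be making the dimension count precise''), but then substitute two claims that are respectively wrong and backwards: (i) for diagonalizable $r_0$ the fiber is not $0$-dimensional --- Lemma~\ref{lemma:r-diagonalizable} normalizes $(i,j)$ using the full $P$-action, which is not available once $r=r_0$ is frozen; with $r_0$ fixed diagonal the admissible $(i,j)$ form the $n$-dimensional set $\{i_pj_p=-1\}$; and (ii) a larger centralizer $\mathfrak{z}_{\fp}(r_0)$ means a \emph{smaller} image of $\ad_{r_0}$, hence \emph{fewer} constraints on $\overline{ij}$ and a potentially larger admissible set, so the lower strata are the harder ones, not the ones ``handled by the same inequality.'' (Also, Theorem~\ref{thm:Hille-Roehrle-block-vec} plays no role here: $r_0$ is a single fixed matrix; finiteness of orbits is only needed in Lemma~\ref{lemma:fibers-dimension-pi}.)

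What actually closes the gap, following Wilson, is a concrete computation with the Jordan $P$-block structure. For $r_0$ a single Jordan $P$-block, the sums of the entries of $X:=[r_0,s]+\I_n$ along each subdiagonal of each diagonal block vanish; combined with $\rk(-ij)=1$ (the lowest nonzero diagonal of a rank-one matrix has a unique nonzero entry) this forces the class $\overline{X}=-\overline{ij}$ to be upper triangular with exactly one nonzero diagonal entry, whose value is pinned to $\tr(X)=n$. If that pivot sits at position $p$, then $j$ vanishes before $p$ and $i$ vanishes after $p$, and $-i_pj_p=n$ is one relation among the $(n-p+1)+p$ remaining coordinates, giving an $n$-dimensional family for each of the $n$ choices of $p$. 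The general $r_0$ is a direct sum of Jordan $P$-blocks and the $(\bfa,\bfa)$-block of the equation reduces to the single-block case, so the parameter counts add up to $n$. This structural analysis is the content of the lemma and is absent from your proposal.
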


\begin{proof}
Since $\pr$ is $P$-invariant, assume that $r_0$ is in Jordan $P$-semicanonical form. 

We first consider the case when $r_0$ consists of a single Jordan $P$-block, which implies that within each diagonal block $d_k$ of $r_0$
\[
(d_k)_{pq} = \begin{cases}
\lambda_k & \text{if } p = q, \\
1 & \text{if } p = q+1, \\
0 & \text{otherwise}.
\end{cases}
\]
Recall that for a matrix $A$ to have rank $1$, for the minimal $d$ such that $A_{p-d,p} \neq 0$ for some $p$, there exists a unique such $p$.
In other words, the lowest diagonal that is not $0$ has exactly one nonzero entry.
Next in $X := [r_0, s] + \I_n$, we have within the $k$-th diagonal block of $X$
\[
\sum_{p=1}^{\alpha_k - \lvert d \rvert} X_{s+p-d,s+p} = 0
\]
for all $\alpha_k \leq d < 0$, where $s = \alpha_1 + \ldots + \alpha_{k-1}$.
Then we can take the representative $\overline{X} \in \fp^*$ with all lower diagonal blocks being $0$, hence we have
\[
\sum_{p=1}^{n - \lvert d \rvert} X_{p-d,p} = 0
\]
for all $d < 0$.
Therefore, $X$ is upper triangular since $\rk(\overline{X}) = \rk(-ij) = 1$. Moreover, there exists a unique nonzero entry on the main diagonal of $\overline{X}$, which is equal to
\[
\tr(X) = \tr(\overline{X}) = \tr([r_0,s]) + \tr(\I_n) = 0 + n = n.
\]
Next, suppose the first nonzero entry in $j$ occurs at position $p$, and so the last nonzero entry in $-i$ also occurs in $p$. Note that $-i_p j_p = \tr(\overline{X})$ from above. Therefore, for each of the pairs $(i,j)$, there are $n$ families corresponding to the choice of entry $p$. Every such family has dimension $n$ since there are $n-p+1$ parameters in $j$ and $p$ parameters in $i$ with one relation: $-i_p j_p = n$. Hence, the claim holds when $r_0$ has a single Jordan $P$-block.

Now, assume $r_0 = \bigoplus_{\bfa} r_{\bfa}$, the direct sum of several Jordan $P$-blocks of sizes $n_{\bfa}$, and we write $s_{\bfa\bfb}$, $(i_{\bfa})$, $(j_{\bfa})$ for the corresponding $P$-block decompositions of $s$, $i$, and $j$, respectively. 
Then taking the $(\bfa,\bfa)$-block in~\eqref{eqn:Wilson} gives 
$[r_{\bfa\bfa}, s_{\bfa\bfa}] + \I_{n_{\bfa}} = -i_{\bfa} j_{\bfa}$. 
By the above, there are only at most $n_{\bfa}$ parameters in $(i_{\bfa}, j_{\bfa})$, and so the claim follows.
\end{proof}

Next, we obtain a result analogous to~\cite[Cor. 1.12]{Wilson98}. 

\begin{corollary}\label{cor:fixed-r-orbit}
Fix a conjugacy class $\mcO_r$ in $\fp$, and define
\[
C_n(\mcO_r) := \{ (r',s,i,j) \in C_n \mid r' \in \mcO_r \}.
\]
Then $\dim C_n(\mcO_r) \leq n$.
\end{corollary}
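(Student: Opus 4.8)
The plan is to reduce the statement to Lemma~\ref{lem:r0-fixed-n-dim-subvariety} together with the free $P$-action from Corollary~\ref{cor:free-action}. First I would fix a representative $r_0 \in \mcO_r$ (say in Jordan $P$-semicanonical form via Theorem~\ref{thm:Jordan_semicanonical_form}), and note that
\[
\widetilde{C}_n(\mcO_r) := \{(r,s,i,j) \in \widetilde{C}_n \mid r \in \mcO_r\}
\]
is the saturation $P \cdot \widetilde{C}_n(r_0)$ of the fiber $\widetilde{C}_n(r_0)$ under the $P$-action, since every $r \in \mcO_r$ is $P$-conjugate to $r_0$ and conjugation preserves $\widetilde{C}_n$. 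The stabilizer $P_{r_0}$ of $r_0$ in $P$ (a closed subgroup) acts on $\widetilde{C}_n(r_0)$, and we have $\widetilde{C}_n(\mcO_r) \iso P \times_{P_{r_0}} \widetilde{C}_n(r_0)$, so
\[
\dim \widetilde{C}_n(\mcO_r) = \dim P - \dim P_{r_0} + \dim \widetilde{C}_n(r_0) = \dim \mcO_r + \dim \widetilde{C}_n(r_0).
\]
Passing to the quotient, $C_n(\mcO_r) = \widetilde{C}_n(\mcO_r)/P$, and since $P$ acts freely (Corollary~\ref{cor:free-action}), $\dim C_n(\mcO_r) = \dim \widetilde{C}_n(\mcO_r) - \dim P = \dim \widetilde{C}_n(r_0) - \dim P_{r_0}$.

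The next step is to bound $\dim \widetilde{C}_n(r_0)$. The projection $\pr \colon \widetilde{C}_n(r_0) \to \CC^{2n}$, $(r_0, s, i, j) \mapsto (i,j)$, has image contained in an $n$-dimensional subvariety of $\CC^{2n}$ by Lemma~\ref{lem:r0-fixed-n-dim-subvariety}. It remains to control the fibers of $\pr$ over $\widetilde{C}_n(r_0)$: a fiber over a fixed $(i_0, j_0)$ consists of the pairs $(r_0, s)$ with $[r_0, s] + \I_n = -\overline{i_0 j_0}$, i.e.\ $\ad_{r_0}^*(s) = -\overline{i_0 j_0 + \I_n}$ in $\fp^*$. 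This is either empty or a coset of $\ker(\ad_{r_0}^* \colon \fp^* \to \fp^*)$, which has dimension $\dim \fp^* - \dim \im(\ad_{r_0}^*) = \dim \fp_{r_0}$ (the centralizer of $r_0$ in $\fp$) since the trace pairing identifies $\ad_{r_0}^*$ with (minus) the transpose of $\ad_{r_0}$ on $\fp$. Hence $\dim \widetilde{C}_n(r_0) \leq n + \dim \fp_{r_0}$, giving
\[
\dim C_n(\mcO_r) \leq n + \dim \fp_{r_0} - \dim P_{r_0} = n,
\]
using that $P_{r_0}$ is open in $\fp_{r_0} \cap P$-like data — more precisely, $\Lie(P_{r_0}) = \fp_{r_0}$, so $\dim P_{r_0} = \dim \fp_{r_0}$.

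The main obstacle I anticipate is making the fiber-dimension bound for $\pr$ precise: one must verify that $\ker(\ad_{r_0}^*)$ on $\fp^*$ really has dimension equal to $\dim \fp_{r_0}$, which requires being careful about the identification $\fp^* \cong \fg/\fu$ and the fact that $\ad_{r_0}$ preserves $\fu$ (so that the induced map on the quotient is well-defined and its kernel/cokernel dimensions match up with those of $\ad_{r_0}$ on $\fp$). An alternative, cleaner route that avoids this bookkeeping is to bypass $\widetilde{C}_n(r_0)$ entirely: stratify $C_n(\mcO_r)$ by the $P$-orbit of $r'$ within $\mcO_r$ — but these $P$-orbits are finite in number by Theorem~\ref{thm:Hille-Roehrle-block-vec} (applied after reducing to the nilpotent/fixed-semisimple-part situation), and on each such locus $r'$ is determined up to the finite $S_\alpha$-type data, so $C_n(\mcO_r)$ is covered by finitely many pieces each of which maps finitely to $\pr(\widetilde{C}_n(r_0))/(\text{scalars})$, an $n$-dimensional (indeed $(2n - \dim P)$-after-quotient, but the rank-one normalization cuts it to) variety; I would present whichever of these two arguments turns out to need the fewest auxiliary lemmas, leaning toward the first since it parallels~\cite[Cor.~1.12]{Wilson98} most directly.
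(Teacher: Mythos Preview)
Your proposal is correct and follows essentially the same approach as the paper. The only cosmetic difference is that the paper bypasses the saturation $\widetilde{C}_n(\mcO_r)$ and writes directly $C_n(\mcO_r) = \widetilde{C}_n(r_0)/P^{r_0}$, then observes (just as you do) that the fiber of $\pr$ over a fixed $(i,j)$ is parameterized by $\Lie(P^{r_0})$, giving $\dim \widetilde{C}_n(r_0) \leq n + \dim P^{r_0}$ via Lemma~\ref{lem:r0-fixed-n-dim-subvariety}, and concludes by freeness of the $P^{r_0}$-action; your dimension bookkeeping through $P \times_{P_{r_0}} \widetilde{C}_n(r_0)$ arrives at the identical inequality, and your anticipated obstacle about $\dim \ker(\ad_{r_0}^*) = \dim \fp_{r_0}$ is exactly what the paper asserts without further comment.
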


\begin{proof}
Fix $r_0 \in \mcO_r$ such that $C_n(\mcO_r) = \widetilde{C}_n(r_0) / P^{r_0}$ (recall that $P^{r_0}$ denotes the centralizer of $r_0$ in $P$). From~\eqref{eqn:Wilson},  it is clear that the part of $\widetilde{C}_n(r_0)$ lying over a fixed $(i,j)\in \CC^{2n}$ is parameterized by the Lie algebra of $P^{r_0}$. We have $\dim \widetilde{C}_n(r_0) \leq n + \dim P^{r_0}$ by Lemma~\ref{lem:r0-fixed-n-dim-subvariety}. Since the $P^{r_0}$-action on $\widetilde{C}_n(r_0)$ is free by Corollary~\ref{cor:free-action}, the claim follows.
\end{proof}

\begin{proof}[Proof of Lemma~\ref{lemma:fibers-dimension-pi}]
Note that the conjugacy classes of $\fp$ can be parameterized by the conjugacy classes of the Levi and the nilpotent $\fu$ subalgebras.
Recall that for a fixed $\rho \in \CC^{(\alpha)}$ such that $\Spec(r) = \rho$, there are only a finite number of ways to decompose $r$ into Jordan $P$-blocks.
Thus, there are only a finite number of orbits $\mcO_r$ by Theorem~\ref{thm:Hille-Roehrle-block-vec} and Theorem~\ref{thm:Jordan_semicanonical_form}.
Therefore, each fiber of $\pi$ is a union of sets $C_n(\mcO_r)$ for a finite number of orbits $\mcO_r$.
The claim follows from Corollary~\ref{cor:fixed-r-orbit}.
\end{proof}

\begin{theorem}
The variety $C_n$ is connected. Moreover, $C_n$ is irreducible.
\end{theorem}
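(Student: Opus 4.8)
The plan is to mimic Wilson's argument that the Calogero--Moser space is connected, now in the parabolic setting. The strategy is a dimension count: the locus $C_n^d = \pi^{-1}(\CC^{(\alpha)} \setminus \Delta)$ is connected by Corollary~\ref{cor:pairwise-distinct-eigenvalues-connected}, and since $\Delta \subseteq \CC^{(\alpha)}$ has dimension strictly less than $n$, Lemma~\ref{lemma:fibers-dimension-pi} shows that $\pi^{-1}(\Delta)$ has dimension at most $(n-1) + n = 2n-1 < 2n = \dim C_n$. Thus $C_n^d$ is a dense open subset of $C_n$. Since $C_n$ is smooth (hence locally irreducible) of pure dimension $2n$, every irreducible component of $C_n$ has dimension $2n$ and therefore must meet the dense open set $C_n^d$; but $C_n^d$ is irreducible (being connected and smooth), so it lies in a single component. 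Hence $C_n$ has exactly one irreducible component, so $C_n$ is irreducible, and in particular connected.

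Let me lay out the steps in order. First I would note that $\dim C_n = 2n$ and that $C_n$ is smooth and pure-dimensional, which has already been established from Proposition~\ref{prop:differential-surjective} and Corollary~\ref{cor:free-action} via the implicit function theorem. Second, I would observe that $\Delta \subseteq \CC^{(\alpha)}$ is a closed subvariety with $\dim \Delta \leq n - 1$. Third, apply Lemma~\ref{lemma:fibers-dimension-pi}: since every fiber of $\pi$ has dimension at most $n$, the preimage $\pi^{-1}(\Delta)$ has dimension at most $\dim \Delta + n \leq 2n - 1$. Fourth, conclude $C_n^d = C_n \setminus \pi^{-1}(\Delta)$ is open and dense in $C_n$. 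Fifth, invoke Corollary~\ref{cor:pairwise-distinct-eigenvalues-connected} together with smoothness of $C_n^d$ to see that $C_n^d$ is irreducible. Sixth, since $C_n$ is pure-dimensional of dimension $2n$, no irreducible component can be contained in the lower-dimensional closed set $\overline{\pi^{-1}(\Delta)}$, so every component meets $C_n^d$; as $C_n^d$ is irreducible it is contained in a single component, forcing $C_n$ to be irreducible. Connectedness is then immediate.

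The main obstacle is really packaged into the already-proven Lemma~\ref{lemma:fibers-dimension-pi} (and upstream, Corollary~\ref{cor:fixed-r-orbit} and Lemma~\ref{lem:r0-fixed-n-dim-subvariety}), where the rank-one condition on $[r,s] + \I_n$ forces the $(i,j)$ data to lie in an $n$-dimensional family for each fixed $r$, and where Theorem~\ref{thm:Hille-Roehrle-block-vec} guarantees finitely many conjugacy classes $\mcO_r$ over each point of $\CC^{(\alpha)}$ (this is precisely where $\ell \leq 5$ enters). Given those lemmas, the connectedness argument itself is a routine dimension count. One small subtlety to handle carefully is that we want density of $C_n^d$ rather than merely that it is nonempty: this uses that $C_n$ is pure-dimensional, which follows from smoothness together with $C_n$ being a quotient of the smooth equidimensional $\widetilde{C}_n$. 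I would also remark, as Wilson does, that since a smooth variety is connected if and only if it is irreducible, the two conclusions of the theorem are equivalent here, so it suffices to prove either one.
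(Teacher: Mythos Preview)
Your proposal is correct and is essentially the same argument as the paper's: both use that $\dim\pi^{-1}(\Delta)\leq 2n-1$ from Lemma~\ref{lemma:fibers-dimension-pi} together with the connectedness of $C_n^d$ from Corollary~\ref{cor:pairwise-distinct-eigenvalues-connected} to conclude that $C_n$ has a single component. The only cosmetic difference is that the paper phrases the final step in terms of connected components while you phrase it in terms of irreducible components, but as you yourself note, these coincide for smooth varieties.
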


\begin{proof}
We have $\dim \pi^{-1}(\Delta) \leq 2n - 1$ by Lemma~\ref{lemma:fibers-dimension-pi} and because $\Delta$ is a reducible subvariety of $\CC^n$ of dimension $n-1$.
Let $X = C_n \setminus \pi^{-1}(\Delta) = \pi^{-1}(\CC^n \setminus \Delta)$, 
the complement of $\pi^{-1}(\Delta)$, and $X$ is a connected open subset of $C_n$ by Corollary~\ref{cor:pairwise-distinct-eigenvalues-connected}.
Let $Y$ denote the connected component containing $X$.
If $\pi^{-1}(\Delta)$ is not contained in $Y$, then $C_n$ would have a connected component of dimension less than $2n$, which cannot happen since $\dim(C_n) = 2n$. Hence, we have
\[
Y = \pi^{-1}(\CC^n \setminus \Delta) \cup \pi^{-1}(\Delta) = \pi^{-1}(\CC^n) = C_n,
\]
and so $C_n$ is connected.
\end{proof}

\def\cprime{$'$} \def\cprime{$'$}
\providecommand{\bysame}{\leavevmode\hbox to3em{\hrulefill}\thinspace}
\providecommand{\MR}{\relax\ifhmode\unskip\space\fi MR }
\providecommand{\MRhref}[2]{%
  \href{http://www.ams.org/mathscinet-getitem?mr=#1}{#2}
}
\providecommand{\href}[2]{#2}


\begin{thebibliography}{GNR16}

\bibitem[Cal71]{Calogero71}
F.~Calogero, \emph{Solution of the one-dimensional {$N$}-body problems with
  quadratic and/or inversely quadratic pair potentials}, J. Mathematical Phys.
  \textbf{12} (1971), 419--436. \MR{0280103}

\bibitem[CG10]{MR2838836}
Neil Chriss and Victor Ginzburg, \emph{Representation theory and complex
  geometry}, Modern Birkh\"auser Classics, Birkh\"auser Boston Inc., Boston,
  MA, 2010, Reprint of the 1997 edition.

\bibitem[Dev18]{sage}
The~Sage Developers, \emph{Sage {M}athematics {S}oftware ({V}ersion 8.4)},
  Available at \href{http://www.sagemath.org}{http://www.sagemath.org}, 2018.

\bibitem[DG84]{MR739636}
I.~Dolgachev and N.~Goldstein, \emph{On the {S}pringer resolution of the
  minimal unipotent conjugacy class}, J. Pure Appl. Algebra \textbf{32} (1984),
  no.~1, 33--47.

\bibitem[Eis95]{MR1322960}
David Eisenbud, \emph{Commutative algebra}, Graduate Texts in Mathematics, vol.
  150, Springer-Verlag, New York, 1995, With a view toward algebraic geometry.

\bibitem[GG06]{GG06}
Wee~Liang Gan and Victor Ginzburg, \emph{Almost-commuting variety, {$\mathscr
  D$}-modules, and {C}herednik algebras}, IMRP Int. Math. Res. Pap. (2006),
  26439, 1--54, With an appendix by Ginzburg. \MR{2210660}

\bibitem[GNR16]{gorsky2016flag}
Eugene Gorsky, Andrei Negu{\c{t}}, and Jacob Rasmussen, \emph{Flag hilbert
  schemes, colored projectors and {K}hovanov-{R}ozansky homology}, arXiv
  preprint arXiv:1608.07308 (2016).

\bibitem[Hai01]{MR1839919}
Mark Haiman, \emph{Hilbert schemes, polygraphs and the {M}acdonald positivity
  conjecture}, J. Amer. Math. Soc. \textbf{14} (2001), no.~4, 941--1006.

\bibitem[HR99]{HR99}
L.~Hille and G.~R\"{o}hrle, \emph{A classification of parabolic subgroups of
  classical groups with a finite number of orbits on the unipotent radical},
  Transform. Groups \textbf{4} (1999), no.~1, 35--52.

\bibitem[Im14]{MR3312842}
Mee~Seong Im, \emph{On semi-invariants of filtered representations of quivers
  and the cotangent bundle of the enhanced {G}rothendieck-{S}pringer
  resolution}, ProQuest LLC, Ann Arbor, MI, 2014, Thesis (Ph.D.)--University of
  Illinois at Urbana-Champaign.

\bibitem[Im18]{MR3836769}
\bysame, \emph{The regular semisimple locus of the affine quotient of the
  cotangent bundle of the {G}rothendieck-{S}pringer resolution}, J. Geom. Phys.
  \textbf{132} (2018), 84--98.

\bibitem[Kas90]{Kashin90}
V.~V. Kashin, \emph{Orbits of an adjoint and co-adjoint action of {B}orel
  subgroups of a semisimple algebraic group}, Problems in group theory and
  homological algebra ({R}ussian), Matematika, Yaroslav. Gos. Univ.,
  Yaroslavl\cprime, 1990, pp.~141--158. \MR{1169975}

\bibitem[Kho02]{MR1928174}
Mikhail Khovanov, \emph{A functor-valued invariant of tangles}, Algebr. Geom.
  Topol. \textbf{2} (2002), 665--741.

\bibitem[Kho04]{MR2078414}
\bysame, \emph{Crossingless matchings and the cohomology of {$(n,n)$}
  {S}pringer varieties}, Commun. Contemp. Math. \textbf{6} (2004), no.~4,
  561--577.

\bibitem[Kin94]{MR1315461}
A.~D. King, \emph{Moduli of representations of finite-dimensional algebras},
  Quart. J. Math. Oxford Ser. (2) \textbf{45} (1994), no.~180, 515--530.

\bibitem[KKS78]{KKS78}
D.~Kazhdan, B.~Kostant, and S.~Sternberg, \emph{Hamiltonian group actions and
  dynamical systems of {C}alogero type}, Comm. Pure Appl. Math. \textbf{31}
  (1978), no.~4, 481--507.

\bibitem[KP96]{Kraft-Procesi}
Hanspeter Kraft and Claudio Procesi, \emph{Classical invariant theory: a
  primer}, \url{http://www.math.unibas.ch/~kraft/Papers/KP-Primer.pdf}, 1996.

\bibitem[Mos75]{Moser75}
J.~Moser, \emph{Three integrable {H}amiltonian systems connected with
  isospectral deformations}, Advances in Math. \textbf{16} (1975), 197--220.
  \MR{0375869}

\bibitem[Mos81]{Moser81}
\bysame, \emph{Various aspects of integrable {H}amiltonian systems}, Uspekhi
  Mat. Nauk \textbf{36} (1981), no.~5(221), 109--151, 248, Translated from the
  English by A. P. Veselov. \MR{637435}

\bibitem[Nak99]{MR1711344}
Hiraku Nakajima, \emph{Lectures on {H}ilbert schemes of points on surfaces},
  University Lecture Series, vol.~18, American Mathematical Society,
  Providence, RI, 1999.

\bibitem[Nev11]{Nevins-GSresolutions}
Thomas Nevins, \emph{Stability and {H}amiltonian reduction for
  {G}rothendieck-{S}pringer resolutions}, Available at
  \href{http://www.math.illinois.edu/~nevins/papers/b-hamiltonian-reduction-2011-0316.pdf}%
  {http://www.math.illinois.edu/\urltilde
  nevins/papers/b-hamiltonian-reduction-2011-0316.pdf}, 2011.

\bibitem[New09]{MR2537067}
P.~E. Newstead, \emph{Geometric invariant theory}, Moduli spaces and vector
  bundles, London Math. Soc. Lecture Note Ser., vol. 359, Cambridge Univ.
  Press, Cambridge, 2009, pp.~99--127.

\bibitem[PR97]{PR97}
Vladimir Popov and Gerhard R\"{o}hrle, \emph{On the number of orbits of a
  parabolic subgroup on its unipotent radical}, Algebraic groups and {L}ie
  groups, Austral. Math. Soc. Lect. Ser., vol.~9, Cambridge Univ. Press,
  Cambridge, 1997, pp.~297--320. \MR{1635688}

\bibitem[Spr76]{MR0442103}
T.~A. Springer, \emph{Trigonometric sums, {G}reen functions of finite groups
  and representations of {W}eyl groups}, Invent. Math. \textbf{36} (1976),
  173--207.

\bibitem[Ste74]{MR0352279}
Robert Steinberg, \emph{Conjugacy classes in algebraic groups}, Lecture Notes
  in Mathematics, Vol. 366, Springer-Verlag, Berlin-New York, 1974, Notes by
  Vinay V. Deodhar.

\bibitem[Ste76]{MR0430094}
\bysame, \emph{On the desingularization of the unipotent variety}, Invent.
  Math. \textbf{36} (1976), 209--224.

\bibitem[Wil98]{Wilson98}
George Wilson, \emph{Collisions of {C}alogero-{M}oser particles and an adelic
  {G}rassmannian}, Invent. Math. \textbf{133} (1998), no.~1, 1--41, With an
  appendix by I. G. Macdonald.

\end{thebibliography}
\end{document}